\newtheorem{thm}{Theorem}[section]
\newtheorem{prop}[thm]{Proposition}
\newtheorem{que}{Question}
\newtheorem{defn}[thm]{Definition}
\newtheorem{lem}[thm]{Lemma}
\newtheorem{cor}[thm]{Corollary}
\newtheorem{conj}[thm]{Conjecture}
\newtheorem*{rk}{Remark}
\newcommand{\del}{\partial} 
\newcommand{\dbar}{\overline{\del}}
\newcommand{\ddb}{\sqrt{-1}\del\dbar}
\newcommand{\ddt}{\frac{d}{d t}}
\newcommand{\cP}{\mathcal{P}}
\newcommand{\DDb}{\sqrt{-1}D\overline{D}}
\newcommand{\cblue}[1]{{\textcolor{black}{#1}}} 
\title[Stability and the dHYM equation]{Stability and the deformed Hermitian-Yang-Mills equation}
\author[T. C. Collins]{Tristan C. Collins$^{*}$}
\address{Department of Mathematics\\
 Massachusetts Institute of Technology\\
 77 Massachusetts Ave.\\
 Cambridge\\
 MA 02139\\
 USA}
 \email{tristanc@mit.edu}
\author[Y. Shi]{Yun Shi}
\address{Center of Mathematical Sciences and Applications\\
Harvard University\\
20 Garden St.\\
Cambridge\\
MA 02138\\
USA}
\email{yshi@cmsa.fas.harvard.edu}
\dedicatory{To S.-T. Yau on the occasion of his 70th birthday.}
\thanks{$^{*}$ supported in part by NSF grant DMS-1810924, NSF CAREER grant DMS-1944952, and an Alfred P. Sloan Fellowship }
\date{}							% Activate to display a given date or no date
\begin{document}
\maketitle
\begin{abstract}
We survey some recent progress on the deformed Hermitian-Yang-Mills (dHYM) equation.  We discuss the role of geometric invariant theory (GIT) in approaching the solvability of the dHYM equation, following work of the first author and S.-T. Yau.  We compare the GIT picture with the conjectural picture for dHYM involving Bridgeland stability.  In particular, following Arcara-Miles \cite{AM16}, we show that on the blow-up of $\mathbb{P}^2$ any line bundle admitting a solution of the deformed Hermitian-Yang-Mills equation is Bridgeland stable, but not conversely.  Finally, we survey some recent progress on heat flows associated to the dHYM equation.
\end{abstract}

\section{Introduction}\label{sec: intro}
Let $(X,\omega)$ be a compact Calabi-Yau manifold of real dimension $2n$ with a non-vanishing holomorphic volume form $\Omega \in H^{0}(X, K_{X})$.  A special Lagrangian submanifold of $X$ is a $n$ dimensional submanifold $L\hookrightarrow X$ satisfying the equations
\[
\omega\big|_{L}=0 \qquad {\rm Im}(e^{-\hat{\theta}}\Omega)\big|_{L} =0.    
\]
Special Lagrangian submanifolds were first introduced by Harvey-Lawson \cite{HL1} as a special class of calibrated submanifold.  In particular, a special Lagrangian is automatically volume minimizing in its homology class \cite{HL1}.   The discovery of mirror symmetry further elevated the status of special Lagrangians.  Specializing to the case of $\dim_{\mathbb{R}} X=6$, if one considers type IIA string theory compactified on $X$, special Lagrangian submanifolds (equipped with local systems) are precisely the ``BPS $D$-branes" \cite{BBS}.  Under mirror symmetry these BPS $D$-branes in type IIA (which we will refer to $A$-branes) are related to BPS $D$-branes in the type IIB theory (now called $B$-branes) on the mirror Calabi-Yau manifold \cite{Kont}.  The BPS $B$-branes on $X$ are considerably more mysterious.  At least in the large radius limit (that is, when one replaces $\omega$ with $t\omega$ and lets $t\rightarrow +\infty$ keeping only the leading order terms), the BPS $B$-branes are holomorphic vector bundles with Hermitian-Yang-Mills connections.  Recall the following foundational result
\begin{thm}[Donaldson \cite{Do3}, Uhlenbeck-Yau \cite{UY}]\label{thm: DUY}
Let $(X,\omega)$ be compact K\"ahler and $E\rightarrow X$ an irreducible holomorphic vector bundle.  Then $E$ admits a Hermitian metric $H$ solving the Hermitian-Yang-Mills equation
\[
\Lambda_{\omega}F(H) = c \mathbb{I}_{E}
\]
if and only if $E$ is stable in the sense of Mumford-Takemoto.  That is, for every torsion-free coherent subsheaf $S \subset E$ we have
\[
\mu(E) := \frac{\deg(E)}{{\rm rk}(E)} > \frac{\deg(S)}{{\rm rk}(S)} =: \mu(S).
\]
\end{thm}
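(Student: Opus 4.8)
The plan is to prove the two implications separately, with the reverse direction (stability $\Rightarrow$ existence) being by far the harder.

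For the \emph{forward} direction (existence $\Rightarrow$ stability), I would argue by a Chern--Weil computation. First, taking the trace of the HYM equation and integrating against $\omega^n/n!$ pins down the constant $c$ in terms of the slope: $c$ is a fixed topological multiple of $\mu(E)$. Now let $S \subset E$ be a torsion-free coherent subsheaf; after saturating we may assume $E/S$ is torsion-free, so that $S$ is a holomorphic subbundle on the complement $X\setminus Z$ of an analytic set $Z$ of complex codimension $\geq 2$. Restricting $H$ to $S$ and writing $\beta \in \Omega^{1,0}(\mathrm{Hom}(S, E/S))$ for the second fundamental form, the Gauss--Codazzi equations express the curvature of the induced metric on $S$ through $F(H)|_S$ and $-\beta\wedge\beta^{*}$. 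Contracting with $\omega$ and integrating yields an identity of the shape $\deg(S) = (\mathrm{rk}\,S)\,\mu(E) - \|\beta\|_{L^2}^2$, so that $\mu(S)\leq \mu(E)$ with equality precisely when $\beta\equiv 0$, i.e. when $S$ splits $E$ holomorphically. Since $E$ is irreducible this forces the strict inequality, giving stability. The only technical point is justifying the integration by parts across $Z$, which is handled by the codimension bound.

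For the \emph{reverse} direction I would follow the continuity method of Uhlenbeck--Yau. Fixing a background metric $H_0$ and writing $h = \log(H_0^{-1}H)$, consider the perturbed family
\[
\Lambda_{\omega}F(H_\epsilon) - c\,\mathbb{I}_E = -\epsilon\,h_\epsilon, \qquad \epsilon\in(0,1].
\]
For $\epsilon$ near $1$ the equation is solvable by the implicit function theorem, since its linearization is a perturbed Laplacian and hence invertible; openness of the set of solvable $\epsilon$ follows by the same mechanism. The crux is closedness as $\epsilon\to 0$: standard elliptic bootstrapping promotes all higher-order estimates once one controls $\sup_X |h_\epsilon|$, and I would obtain the requisite integral and then $C^0$ bounds via Moser iteration applied to the Bochner--Weitzenb\"ock inequalities satisfied by $\mathrm{tr}(h_\epsilon)$ and $|h_\epsilon|$.

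The main obstacle is exactly this $C^0$ estimate, and this is where stability must enter. Suppose $\sup_X|h_\epsilon|\to\infty$ along some $\epsilon_i\to 0$. After normalizing $h_{\epsilon_i}$ by its $L^1$ norm and passing to a weak limit, the eigenvalue projections of the limiting endomorphism are, by the Uhlenbeck--Yau regularity theorem, weakly holomorphic $L^2_1$ subbundles, and any such object is an honest coherent subsheaf $S\subset E$. Tracking the degree of this limiting projection through the equation produces $\mu(S)\geq\mu(E)$, contradicting stability. Hence the $C^0$ bound holds, the continuity method closes up, and we recover a solution $H$ of $\Lambda_\omega F(H)=c\,\mathbb{I}_E$. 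The two genuinely hard inputs are the a priori estimate for the perturbed family and the regularity statement that a weakly holomorphic subbundle is a coherent subsheaf; I would expect to invoke the latter rather than reprove it.
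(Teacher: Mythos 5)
This statement is not proved in the paper: it is the classical Donaldson--Uhlenbeck--Yau theorem, quoted in Section~1 as background with references to Donaldson \cite{Do3} and Uhlenbeck--Yau \cite{UY}, so the only meaningful comparison is with the cited proofs. Your sketch is a faithful outline of the Uhlenbeck--Yau argument: the forward direction via the Gauss--Codazzi/Chern--Weil identity $\deg(S) = \mathrm{rk}(S)\,\mu(E) - \|\beta\|_{L^2}^2$ on the locus where the saturated subsheaf is a subbundle (with the codimension-$\geq 2$ justification of the integration correctly flagged), and the reverse direction via the perturbed continuity path $\Lambda_\omega F(H_\epsilon) - c\,\mathbb{I}_E = -\epsilon\, h_\epsilon$, the $C^0$ estimate as the crux, and the construction of a destabilizing weakly holomorphic subbundle from normalized limits of $h_{\epsilon_i}$, promoted to a coherent subsheaf by the Uhlenbeck--Yau regularity theorem. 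Note that Donaldson's cited proof is genuinely different in mechanism --- it runs the Donaldson heat flow and compares with Mumford stability via an induction on dimension --- so you have reproduced one of the two classical routes, not both.

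Two points in your sketch deserve sharpening. First, ``for $\epsilon$ near $1$ the equation is solvable by the implicit function theorem'' is not self-contained: the implicit function theorem gives openness of the solvable set along the path, but the path must be anchored by an actual solution at $\epsilon = 1$, which in \cite{UY} requires a separate existence argument (the zeroth-order term $\epsilon\, h_\epsilon$ makes the $\epsilon=1$ equation solvable, but by a fixed-point/auxiliary continuity argument, not by perturbing off a known solution). Second, the step ``tracking the degree of this limiting projection through the equation produces $\mu(S) \geq \mu(E)$'' compresses what is in fact the hardest estimate of the proof --- one must control the degree of the limiting projection, defined only as an $L^2_1$ idempotent, through the degenerating family, and this occupies the analytic core of \cite{UY}. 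As an outline your proposal is correct and matches the cited Uhlenbeck--Yau proof; these are the two places where it silently defers nontrivial work beyond the two inputs you explicitly name.
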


Motivated by the correspondence between $A/B$-branes and Theorem~\ref{thm: DUY},  Thomas \cite{Th} and Thomas-Yau \cite{ThY} made the following fundamental conjecture

\begin{conj}[Thomas \cite{Th}, Thomas-Yau, \cite{ThY}]\label{conj: TY}
There is a stability condition on Hamiltonian deformation classes of Lagrangians such that a class $[L]$ is stable if and only if it admits a special Lagrangian representative. 
\end{conj}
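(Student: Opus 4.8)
The plan is to treat Conjecture~\ref{conj: TY} as the Lagrangian, $A$-model mirror of the Donaldson--Uhlenbeck--Yau theorem (Theorem~\ref{thm: DUY}) and to split the biconditional into a comparatively elementary direction and a hard analytic direction, exactly as in the bundle case. First I would pin down the stability notion: to each graded, unobstructed Lagrangian $L$ associate its central charge $Z(L) = \int_L \Omega \in \C^*$ and its phase $\hat\theta(L)$ defined by $Z(L) = |Z(L)| e^{\sqrt{-1}\hat\theta(L)}$, and declare $[L]$ stable if for every exact triangle $L' \to L \to L''$ in the derived Fukaya category arising from a Lagrangian surgery one has the strict phase inequality $\hat\theta(L') < \hat\theta(L) < \hat\theta(L'')$, suitably lifted to $\mathbb{R}$. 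This is the Thomas--Yau stability condition, and the first task is to verify that it is the restriction to the $A$-side of a genuine Bridgeland stability condition, so that ``Hamiltonian deformation class'' and ``stability'' are the right invariants to compare.

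For the forward direction (special Lagrangian $\Rightarrow$ stable) I would argue by calibration. If $L$ is special Lagrangian of phase $\hat\theta$, then $\mathrm{Im}(e^{-\sqrt{-1}\hat\theta}\Omega)|_L = 0$ forces $\mathrm{Re}(e^{-\sqrt{-1}\hat\theta}\Omega)|_L$ to equal the induced volume form, so that $Z(L) = e^{\sqrt{-1}\hat\theta}\vol(L)$ and, by the Harvey--Lawson calibration inequality, $L$ minimizes volume in its homology class. Given a destabilizing surgery presenting $L$ as a connect sum $L' \# L''$ one has additivity of central charge, $Z(L) = Z(L') + Z(L'')$, together with the calibration bounds $\vol(L') \ge |Z(L')|$ and $\vol(L'') \ge |Z(L'')|$ and the volume-minimality $\vol(L) \le \vol(L') + \vol(L'')$. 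Combining these with the triangle inequality in $\C$ shows that equality of moduli can hold only when $Z(L')$ and $Z(L'')$ are positive multiples of $Z(L)$, and quantifying the resulting deficit yields the strict phase inequalities demanded by stability. This is the direct analogue of the fact that an HYM bundle is slope-stable, and I expect it to be the easier half once the surgery-to-triangle dictionary is fixed.

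The hard direction (stable $\Rightarrow$ existence of a special Lagrangian representative) is the genuine obstacle, and here I would pursue two complementary routes. The geometric route is to run Lagrangian mean curvature flow from a representative of $[L]$: special Lagrangians are exactly the fixed points, being minimal and phase-fixed, and the conjectural mechanism of Thomas--Yau and Joyce is that stability should prevent the flow from collapsing and force convergence, while instability should produce finite-time singularities modelled on the very surgeries $L' \# L''$ that destabilize. The analytic route, which is the subject of this survey, is to pass through mirror symmetry to the $B$-model, where the special Lagrangian condition becomes the deformed Hermitian--Yang--Mills equation on a line bundle and Conjecture~\ref{conj: TY} becomes a statement comparing solvability of dHYM with Bridgeland stability; one then imports the GIT framework of the first author and Yau, in which solvability is detected by positivity of intersection numbers over subvarieties. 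The main obstacle on the geometric side is that singularities of Lagrangian mean curvature flow form generically, by work of Neves, and need not be graphical, so the surgery program needed to continue the flow past singular times and to match it precisely with the destabilizing triangles is not yet under control; on the analytic side the obstacle is that mirror symmetry is unavailable rigorously in the generality of the conjecture, and---as the blow-up of $\mathbb{P}^2$ example below shows---Bridgeland stability is genuinely weaker than dHYM solvability, so the ``correct'' stability condition must be strictly refined before either direction can be closed.
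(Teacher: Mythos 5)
The statement you were given is a \emph{conjecture}: the paper offers no proof of Conjecture~\ref{conj: TY}, attributes it to Thomas and Thomas--Yau as an open problem, and everything the survey actually establishes (the GIT framework, the dHYM obstructions, the ${\rm Bl}_p\mathbb{P}^2$ comparison with Bridgeland stability) is evidence on the mirror $B$-side, not a proof on either side. Your text is accordingly a research program rather than a proof, and to your credit you concede as much at the end of each half; but judged as a proof it has a concrete gap beyond the ones you flag, in the direction you call ``the easier half.'' From $Z(L)=Z(L')+Z(L'')$, the special Lagrangian identity $\mathrm{vol}(L)=|Z(L)|$, the calibration bounds $\mathrm{vol}(L')\ge|Z(L')|$ and $\mathrm{vol}(L'')\ge|Z(L'')|$, and homological volume-minimality $\mathrm{vol}(L)\le\mathrm{vol}(L')+\mathrm{vol}(L'')$, the only chain you can assemble is
\[
|Z(L)| \;\le\; |Z(L')|+|Z(L'')| \;\le\; \mathrm{vol}(L')+\mathrm{vol}(L''),
\]
and this holds for \emph{every} phase configuration of $Z(L')$ and $Z(L'')$; no equality is forced, so the equality case of the triangle inequality in $\C$ never engages, and no strict inequality $\hat\theta(L')<\hat\theta(L)<\hat\theta(L'')$ follows. ``Quantifying the resulting deficit'' is not a refinement of this computation: it is the angle criterion governing Lawlor-neck gluings of special Lagrangians, i.e.\ precisely the hard geometric input of the Thomas--Yau/Joyce picture, and it is open. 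Moreover the objects $L',L''$ in a destabilizing triangle of $D^{b}{\rm Fuk}$ need not be embedded, unobstructed, geometric Lagrangians at all, so neither the calibration bound nor the surgery-to-volume comparison applies to them as stated.

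There is a second structural gap: the stability condition you define is not known to be well-posed. The central charge $Z(L)=\int_L\Omega$ can vanish, phases require grading lifts, and the Harder--Narasimhan property on the Fukaya category --- which your definition silently assumes when it restricts attention to surgery triangles --- is exactly the existence half of Conjecture~\ref{conj: newTY}, which the paper emphasizes is unknown even on the better-understood $B$-side. Nor can the mirror route close the argument: homological mirror symmetry is unavailable in this generality, and the survey's own ${\rm Bl}_p\mathbb{P}^2$ analysis shows that Bridgeland stability for the naive central charge is strictly weaker than dHYM solvability, so the dictionary you would import is not yet a biconditional even where it can be tested; the LMCF route founders on non-graphical finite-time singularities, as you correctly note. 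In sum, your write-up is a faithful and well-organized account of the known heuristics, but both implications remain open, and the one step at which it purports to actually complete an implication (triangle inequality $\Rightarrow$ strict phase inequalities) is invalid as written.
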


Thomas-Yau \cite{ThY} also described a conjectural picture connecting stability, the Lagrangian mean curvature flow and Harder-Narasimhan type filtrations of Lagrangians. The precise notion of stability relevant in Conjecture~\ref{conj: TY} was left open, but Thomas \cite{Th} explained many of the properties that such a stability condition must possess.

At around the same time the analogous picture on the holomorphic side of mirror symmetry was significantly clarified by Mari\~no-Minasian-Moore-Strominger \cite{MMMS}, and Leung-Yau-Zaslow \cite{LYZ} who independently, and using very different points of view, derived the equations of motion for BPS $B$-branes, at least for abelian gauge group; this equation is now known as the {\em deformed Hermitian-Yang-Mills equation}.  Restricting the discussion to the case of zero $B$-field, a holomorphic line bundle $L \rightarrow X$ satisfies the deformed Hermitian-Yang-Mills equation if it admits a metric $h$ such that the curvature $F(h)$ satisfies
\begin{equation}\label{eq: dhymIntro}
\begin{aligned}
{\rm Im}\left(e^{-\sqrt{-1}\hat{\theta}}(\omega + F(h))^{n}\right)=0\\
{\rm Re}\left(e^{-\sqrt{-1}\hat{\theta}}(\omega + F(h))^{n}\right)>0
\end{aligned}
\end{equation} 
for a constant $e^{\sqrt{-1}\hat{\theta}} \in S^1$.  Equation~\eqref{eq: dhymIntro} is a fully nonlinear elliptic equation for the metric $h$. 

A second significant advance came when Douglas \cite{Doug} introduced an algebraic approach to studying BPS $A/B$-branes building on Theorem~\ref{thm: DUY} and ideas from physics. Douglas proposed an algebraic notion of $\Pi$-stability with the same functorial properties that BPS $A/B$-branes should possess.  This opened the door to attempting to study the BPS $A/B$-branes purely algebraically, dispensing with the PDEs describing the equations of motion.  In the context of Theorem~\ref{thm: DUY}, Douglas' proposal amounts to studying Mumford-Takemoto stable bundles without the Hermitian-Yang-Mills equation.   Bridgeland developed this proposal introducing the foundational notion of a Bridgeland stability condition \cite{Br}.  If Douglas' algebraic approach to BPS $A/B$-branes indeed produces objects which satisfy the equations of motion, then we are forced to arrive at the following fundamental conjecture

\begin{conj}\label{conj: newTY}
Let $L \rightarrow X$ (resp. $L \hookrightarrow X$) be a holomorphic line bundle (resp. Lagrangian submanifold).  Then $L$ admits a metric solving the deformed Hermitian-Yang-Mills equation (resp. $L$ can be deformed by Hamiltonian deformations to a special Lagrangian) if and only if $L$ is stable in the sense of Bridgeland as an object in $D^{b}{\rm Coh}(X)$ (resp. $D^{b}{\rm Fuk}(\check{X})$).
\end{conj}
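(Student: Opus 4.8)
Since this is a conjecture whose full resolution would amount to a form of homological mirror symmetry, I do not expect to prove it outright; instead I would aim for the one implication the abstract advertises as accessible — that a dHYM-solvable line bundle is Bridgeland stable — and treat $\Bl_p\mathbb{P}^2$ as the model case. The guiding analogy is the \emph{easy} direction of Theorem~\ref{thm: DUY}, HYM $\Rightarrow$ stable: there one feeds the Hermitian-Yang-Mills connection into a Chern-Weil integration against the orthogonal projection onto a subsheaf $S$ and reads off $\mu(S)\le\mu(E)$. My plan is to find the dHYM counterpart of this ``integration-by-positivity'' argument, where the slope inequality is replaced by an inequality of \emph{phases}.

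First I would fix conventions so that the dHYM phase $\hat{\theta}$ in \eqref{eq: dhymIntro} is exactly $\arg Z(L)$, where $Z(\cdot)=-\int_X e^{-\sqrt{-1}\omega}\,\mathrm{ch}(\cdot)$ is the central charge of the relevant (tilted) Bridgeland stability condition. Under this identification, solving \eqref{eq: dhymIntro} says precisely that $e^{-\sqrt{-1}\hat{\theta}}(\omega+F(h))^n$ is a \emph{positive} top-form, i.e. the total phase of $L$ is realized pointwise by the positive representative. The next step is to extract from such a solution the \emph{necessary numerical condition}: for every proper analytic subvariety $V\subset X$ of dimension $p$, integrating $e^{-\sqrt{-1}\hat{\theta}}(\omega+F(h))^{p}$ over $V$ and invoking positivity of the dHYM representative yields a strict phase inequality $\arg\!\int_V(\omega+F)^{p}>\hat{\theta}$ (suitably interpreted). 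This is the dHYM analogue of the Chern-Weil slope estimate, and it is essentially the subvariety obstruction of Collins-Yau: a dHYM solution forces all subvarieties to have the ``correct'' phase relative to $L$.

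The final step is to match these analytic phase obstructions to the algebraic stability inequalities $\arg Z(A)<\arg Z(L)$ for subobjects $A$ in the heart of the bounded $t$-structure. On $\Bl_p\mathbb{P}^2$ this is tractable because the Bridgeland walls and the candidate destabilizing objects — twists of the structure sheaf of the exceptional curve, ideal sheaves of points, and their extensions — can be enumerated explicitly, following Arcara-Miles \cite{AM16}; one then verifies that the subvariety inequalities already produced by a dHYM solution control all of these test objects, giving Bridgeland stability. The main obstacle, and precisely the reason the converse fails and the general conjecture remains open, is this last matching: Bridgeland destabilizers live in a tilted abelian category and need \emph{not} be twists of structure sheaves of honest subvarieties, so the analytic obstructions coming from a dHYM metric a priori control only a subset of the algebraic stability inequalities. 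Reconciling the two classes of ``test objects'' is exactly the content of mirror symmetry, and even on a surface one must compute the chamber structure by hand to confirm that no extra algebraic destabilizer escapes the analytic criterion. The reverse implication is harder still, since it demands solving the fully nonlinear equation \eqref{eq: dhymIntro} from a purely numerical hypothesis — a Nakai-Moishezon-type existence theorem — which is known only under restrictive positivity assumptions and genuinely breaks down on $\Bl_p\mathbb{P}^2$.
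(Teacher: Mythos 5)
You have correctly read the situation: the statement is an open conjecture, the paper does not prove it, and what is actually established (following Arcara-Miles) is the single implication on ${\rm Bl}_p\mathbb{P}^2$ that dHYM-solvability implies $\sigma_{1,0,0}$-stability, together with a counterexample to the converse. Your overall route for that implication — pass from the dHYM solution to numerical phase inequalities over subvarieties, then use the Arcara-Miles wall analysis to check those inequalities control all Bridgeland destabilizers — is the paper's route. But two of your intermediate steps are wrong as stated.

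First, your mechanism for extracting the subvariety obstructions does not work: you propose to integrate $e^{-\sqrt{-1}\hat{\theta}}(\omega+F(h))^{p}$ over $V$ and ``invoke positivity,'' but the dHYM equation on $X$ gives pointwise positivity only of the top-degree form $e^{-\sqrt{-1}\hat{\theta}}(\omega+F)^{n}$; its restriction to a $p$-dimensional $V$ carries no sign, and a dHYM metric does not restrict to a dHYM metric on $V$. In the paper these obstructions come instead from the GIT framework — geodesic rays built from flag ideals via log resolutions, convexity of the Kempf-Ness functional $\mathcal{J}$ along $C^{1,1}$ geodesics (Theorem~\ref{thm: geoThm}), yielding Proposition~\ref{prop: algObst} and Corollary~\ref{cor: obstructions} — and in the surface case, which is what the ${\rm Bl}_p\mathbb{P}^2$ theorem actually uses, from Lemma~\ref{lem: conjDim2}: the dHYM equation on a surface is rewritten as a Monge-Amp\`ere equation, so solvability is \emph{equivalent} (via Yau's theorem and Demailly-P\u{a}un) to $[\cot(\hat{\theta})\omega+\alpha]$ being K\"ahler, i.e.\ to ${\rm Im}\left(Z_{C}/Z_{X}\right)>0$ for all curves $C$, which is the inequality~\eqref{ObsC} fed into the stability argument. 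Second, your diagnosis of why the converse fails is incorrect. On a surface the Nakai-Moishezon-type existence theorem does \emph{not} break down — Lemma~\ref{lem: conjDim2} is an if-and-only-if, with no hypercritical phase hypothesis. The converse fails because Bridgeland stability is numerically \emph{weaker} than the dHYM criterion: the destabilizing condition for the rank-one subobjects $L(-C)\otimes I_n$ carries the correction $-\frac{1}{2}C^{2}+n\geq 0$ when $C^{2}\leq 0$, so a line bundle can satisfy all Bridgeland inequalities while violating~\eqref{ObsC}; the paper exhibits exactly such an $L$ with $C=E$ the exceptional curve. Finally, note that the step you flag as the ``main obstacle'' — that destabilizers in the tilted heart need not be twists of ideal sheaves of subvarieties — is not left open in the paper: it is resolved by Maciocia's nested-wall theorem (Theorem~\ref{nest}), the Bogomolov-Gieseker reduction of Proposition~\ref{ABCH}, and Propositions~\ref{type123}, \ref{type4} and \ref{rk1}, which together force any destabilizer at $\sigma_{1,0,0}$ to have rank one and the form $L(-C)\otimes I_n$ with $C^{2}\leq 0$. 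Any completion of your proposal would need to supply this reduction rather than only the enumeration of candidate walls.
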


For the time being we will refrain from specifying which particular Bridgeland stability condition on $D^{b}{\rm Coh}(X)$ (resp. $D^{b}{\rm Fuk}(\check{X})$).  However, it should be emphasized that the relevant Bridgeland stability conditions are not known to exist in general, despite a great deal of progress \cite{BMT, AB, Li}.  Thus,  Conjecture~\ref{conj: newTY} should be viewed as two conjectures, the first concerning the existence of Bridgeland stability conditions, and the second concerning the equivalence of stability and the solvability of geometric PDEs. On the symplectic side, Joyce \cite{J} has provided a detailed update to the Thomas-Yau conjecture, connecting the behavior of the Lagrangian mean curvature flow and Bridgeland stability.

The purpose of this article is to survey recent progress towards understanding the existence of solutions to the deformed Hermitian-Yang-Mills equation, with a particular focus on recent work of the first author and S.-T. Yau \cite{CY18}.  We should emphasize that much interesting work has been done on the dHYM equation, associated heat flows, and coupled versions of the dHYM equation \cite{JY, CJY15, HJ1, HY, Ping, Tak, Tak1, ScSt}.  Via mirror symmetry, there are also concrete connections between the dHYM equation and symplectic geometry, particularly in the setting of Landau-Ginzburg models \cite{CY18}.  Due to time and space limitations, we will only be able to touch briefly on some of these themes.

Informally, the perspective taken in \cite{CY18} is the opposite of Douglas' work \cite{Doug}; the first author and Yau take the point of view that the relevant stability condition should appear naturally as obstructing the existence of solutions to the deformed Hermitian-Yang-Mills equation. By developing a GIT (geometric invariant theory) approach to the deformed Hermitian-Yang-Mills equation (building on work of Thomas \cite{Th} and Solomon \cite{Sol} in symplectic geometry), the first author and Yau and found algebro-geometric obstructions to the existence of solutions to the dHYM equation which are at least formally similar to the obstructions which one would expect to find in a conjectural Bridgeland stability condition.  In this article we review this work, as well as some recent progress, and analyze how the stability conditions obtained in \cite{CY18} compare with Bridgeland stability in the case of ${\rm Bl}_{p}\mathbb{P}^2$.  In particular, following work of Arcara-Miles \cite{AM16} we show that a line bundle $L\rightarrow {\rm Bl}_{p}\mathbb{P}^2$ admitting a metric solving the dHYM equation is Bridgeland stable, but not conversely.

The layout of the article is as follows.  In Section~\ref{sec: GIT} we introduce the infinite dimensional GIT framework for the dHYM equation, and discuss the associated variational problem.    In Section~\ref{sec: Alg} we explain how the results in Section~\ref{sec: GIT} can be used to obtain algebraic obstructions to the existence of solutions for the dHYM equation.  We also discuss the role of Chern number inequalities, and the appearance of Bridgeland-type stability structures.  In Section~\ref{sec: Brid} we discuss the work of Arcara-Miles \cite{AM16} and explain the relationship between the stability notions described in Section~\ref{sec: Alg} and Bridgeland stability.  Finally, in Section~\ref{sec: AnAsp} we discuss some of the analytic aspects of the dHYM equation and related heat flows.
\\
\\
{\bf Acknowledgements:} The authors would like to thank A. Jacob for helpful comments on a preliminary draft of this survey, and for explaining to them the results of \cite{JS}.  The authors are also grateful to S.-T. Yau for his interest and encouragement. The first author would like to thank C. Scarpa for helpful comments. The second author would like to thank D. Arcara for answering a question about \cite{ABCH13}.
 
 \section{GIT and the Variational Framework}\label{sec: GIT}
 
 Let $(X,\omega)$ be a compact K\"ahler manifold and fix a class $[\alpha] \in H^{1,1}(X,\mathbb{R})$.  Suppose that the topological constant
 \begin{equation}\label{eq: topConst}
  \hat{z}([\omega],[\alpha]):=\int_{X}(\omega+\sqrt{-1}\alpha)^{n} \in \mathbb{C}^{*}.
 \end{equation}
 Note that in general in can happen that $\hat{z}([\omega],[\alpha])=0$; we will elaborate on this point later.  Assuming $\hat{z}\ne 0$ we can write
 \[
 \hat{z}([\omega],[\alpha]) \in \mathbb{R}_{>0} e^{\sqrt{-1}\hat{\theta}}
 \]
 where $\hat{\theta} \in [0, 2\pi)$ is uniquely determined.  Fix a smooth representative $\alpha$ of the fixed class $[\alpha]$.  The deformed Hermitian-Yang-Mills equation seeks a function $\phi: X\rightarrow \mathbb{R}$ such that $\alpha_{\phi} := \alpha + \ddb \phi$ satisfies
 \[
 {\rm Im}(e^{-\sqrt{-1}\hat{\theta}}(\omega+ \sqrt{-1}\alpha_{\phi})^{n}) =0.
 \]
 To connect this equation with the discussion in the introduction one takes $[\alpha] = -c_{1}(L)$, or more generally $[\alpha] = -c_{1}(L) + [B]$ for some $[B] \in H^{1,1}(X,\mathbb{R})/H^{1,1}(X, \mathbb{Z})$ which corresponds to considering the deformed Hermitian-Yang-Mills equation on the line bundle $L^{-1}$ with respect to the complexified K\"ahler form $\omega+\sqrt{-1}B$; or in other words, `turning on the $B$-field' in the discussion of Section~\ref{sec: intro}.  To get a better feel for this equation it is useful to rewrite the problem.  Fix a point $p\in X$, and choose coordinates $(z_1, \ldots z_n)$ so that at $p$ we have
 \[
 \omega= \sum_{i=1}^{n}\sqrt{-1} dz_i \wedge d\bar{z}_{i}, \qquad \alpha= \sum_{i=1}^{n} \lambda_i \sqrt{-1}dz_i\wedge d\bar{z}_{i}
 \]
 for $\lambda_i \in \mathbb{R}$, $1 \leq i \leq n$.  Define the Lagrangian phase operator to be
 \[
 \Theta_{\omega}(\alpha) := \sum_{i=1}^{n}\arctan(\lambda_i).
 \]
 The deformed Hermitian-Yang-Mills equation is equivalent to
 \[
\Theta_{\omega}(\alpha)= \beta \quad \text{ where } \quad \beta = \hat{\theta}  \mod 2\pi.
 \]
Written in this way it is clear that the deformed Hermitian-Yang-Mills equation is a fully nonlinear elliptic PDE.  

The remainder of this section will describe the infinite dimensional GIT approach to the dHYM equation.  We refer readers unfamiliar with geometric invariant theory to \cite{GIT, ThNo} for a thorough introduction.  The role of infinite dimensional GIT in complex geometric analysis problems dates back at least to Atiyah-Bott \cite{AtBo}, but has been more recently emphasized by work of Donaldson, Hitchin and many others; see for example \cite{Do, Do1, Do4, Do5, Hit, Hit1}.

\subsection{The moment map} The first basic observation, which is inspired by work of Thomas \cite{Th} in symplectic geometry, is that the deformed Hermitian-Yang-Mills equation is the zero of a certain moment map.  Fix a complex line bundle $L \rightarrow X$, and let $h$ be a hermitian metric on $L$.  Let
\[
\mathcal{M} = \{ h \text{ unitary connections } \nabla = d+A \text{ on } L \text{ such that } F_{A}^{0,2}=0\}
\]
That is, $\mathcal{M}$ is the space of $h$-unitary connections on $L$ inducing integrable complex structures on $L$.  Since any integrable complex structure induces a unique $h$-unitary connection on $L$ (the Chern connection), we can view $\mathcal{M}$ as the space of integrable complex structures on $L$.  At any point $A\in \mathcal{M}$ we have
\[
T_{A}\mathcal{M} = \{ B^{0,1} \in C^{\infty}(X, \Lambda^{0,1}T^{*}X) : \dbar B^{0,1}=0\}
\]
An element $B^{0,1} \in T_{A}\mathcal{M}$ gives rise to a local curve in $\mathcal{M}$ by setting $\gamma(t) = A+ t(B^{0,1} + B^{1,0})$ where $B^{1,0}= - \overline{B^{0,1}}$, as dictated by the unitarity requirement.  There is a natural complex structure $\mathcal{J}$ on $T_{A}\mathcal{M}$ defined by
\[
\mathcal{J} B^{1,0} = \sqrt{-1}B^{1,0} \qquad \mathcal{J}B^{0,1} = -\sqrt{-1}B^{0,1}
\]
Define a symplectic form on $\mathcal{M}$ by the following formula
\[
\Omega_{A}(B,C) := \int_{X} B\wedge C \wedge {\rm Re}(e^{-\sqrt{-1}\hat{\theta}}(\omega -F_{A})^{n-1}).
\]
We then get an induced Hermitian inner product by 
\[
\langle B,C \rangle_{A} = \Omega_{A}(B, \mathcal{J}C) = 2\int_{X}{\rm Im}\left(B^{0,1}\wedge\overline{C^{0,1}}\right) \wedge {\rm Re}(e^{-\sqrt{-1}\hat{\theta}}(\omega -F_{A})^{n-1})
\]
In general this inner product is degenerate, but close enough to a solution of the dHYM equation it is a genuine inner product.  The gauge group $\mathcal{G}_{U}$ of unitary transformations of $L$ acts on the right on $\mathcal{M}$ in the usual way and it is easy to see that this action preserves the symplectic form.   The Lie algebra $\mathfrak{g}_{U}$ can be identified with $C^{\infty}(X,\mathbb{R})$ by
\[
C^{\infty}(X,\mathbb{R}) \ni \phi \mapsto g = e^{\sqrt{-1}\phi}.
\]
The dual of the lie algebra $\mathfrak{g}_{U}^{*}$ can be identified with the space of volume forms on $X$ by the non-degenerate pairing $(\phi, \beta) \in \mathfrak{g}_{U}\times\mathfrak{g}_{U}^{*} \mapsto \int_X \phi \beta$.  Now any $\phi \in \mathfrak{g}_{U}$ induces a vector field on $\mathcal{M}$ given by $\sqrt{-1}d\phi = \sqrt{-1}\,\dbar \phi + \sqrt{-1}\del \phi$.  For any $C \in T_{A}\mathcal{M}$ we have
\[
\begin{aligned}
\Omega_{A} (\sqrt{-1}d\phi, C) &= \int_{X} \sqrt{-1}d\phi \wedge C \wedge {\rm Re}(e^{-\sqrt{-1}\hat{\theta}}(\omega -F_{A})^{n-1})\\
&=-\sqrt{-1}\int_{X} \phi dC  \wedge {\rm Re}(e^{-\sqrt{-1}\hat{\theta}}(\omega -F_{A})^{n-1})
\end{aligned}
\]
On the other hand we have $dC = \del C^{0,1} + \dbar C^{1,0}$ since $\dbar C^{0,1}=0$ and $C^{1,0} = -\overline{C^{0,1}}$ by definition.  In particular, $dC$ is purely imaginary and we have
\[
\begin{aligned}
\frac{d}{dt}\big|_{t=0}{\rm Im}\left(e^{-\sqrt{-1}\hat{\theta}}(\omega - F_{A+tC} )^{n}\right)&= -n{\rm Im} \left(e^{-\sqrt{-1}\hat{\theta}}(\omega- F_{A})^{n-1} \wedge dC\right)\\
&=n\sqrt{-1} dC \wedge {\rm Re} \left((e^{-\sqrt{-1}\hat{\theta}}(\omega- F_{A})^{n-1}\right)
\end{aligned}
\]
Thus we see that the moment map for the $\mathcal{G}_{U}$ action is precisely
\[
\mathcal{M} \ni A \mapsto -\frac{1}{n}{\rm Im}\left(e^{-\sqrt{-1}\hat{\theta}}(\omega - F_{A} )^{n}\right) \in \mathfrak{g}_{U}^*.
\]
\subsection{The Kempf-Ness function and GIT} In order to apply the ideas of finite dimensional GIT, one needs a complexification of the group $\mathcal{G}_{U}$, which we denote $\mathcal{G}_{U}^{\mathbb{C}}$, and a Riemannian metric on $\mathcal{G}_{U}^{\mathbb{C}}/\mathcal{G}_{U}$ making it into a non-positively curved symmetric space.  In this setting $\mathcal{G}_{U}^{\mathbb{C}}$ is just the group of complex gauge transformations on $L$, and, using the metric $h$, one can identify the group $\mathcal{G}_{U}^{\mathbb{C}}/\mathcal{G}_{U}$ with the space of hermitian metrics on $L$, or equivalently (modulo scaling), as the space of $(1,1)$-forms  in $c_1(L)$.  To facilitate our discussion of the case when $[\alpha] \in H^{1,1}(X,\mathbb{R})$ we make the following definition, inspired by work of Solomon \cite{Sol}.

\begin{defn}\label{defn: H}
Let $[\alpha] \in H^{1,1}(X,\mathbb{R})$.  Given a function $\phi \in C^{\infty}(X,\mathbb{R})$, we say that a $(1,1)$ form $\alpha_{\phi} := \alpha + \ddb \phi$ in $[\alpha]$ is {\em almost calibrated} if
\[
{\rm Re}(e^{-\sqrt{-1}\hat{\theta}}(\omega + \sqrt{-1}\alpha_{\phi})^n) >0.
\]
We define the space of almost calibrated $(1,1)$ forms to be
\[
\mathcal{H}:= \{ \phi \in C^{\infty}(X,\mathbb{R}) : \alpha_{\phi} \text{ is almost calibrated } \}.
\]
\end{defn}

It can happen that $\mathcal{H}$ is empty.  For example, note that if $\mathcal{H} \ne \emptyset$, then the integral~\eqref{eq: topConst} is non-vanishing since
\[
{\rm Re}(e^{-\sqrt{-1}\hat{\theta}}\hat{z}([\omega],[\alpha]) = \int_{X}{\rm Re}(e^{-\sqrt{-1}\hat{\theta}}(\omega + \sqrt{-1}\alpha_{\phi})^n) >0.
\]
Hence $\hat{z}([\omega],[\alpha])=0$ is a non-trivial intersection theoretic obstruction to $\mathcal{H}\ne \emptyset$.  It would be interesting to give a Nakai-Moishezon type criterion which is equivalent to $\mathcal{H}\ne \emptyset$.  

\begin{que}\label{que: DP}
Are there algebraic conditions on $[\alpha]$ (eg. intersection theoretic conditions) which guarantee that $\mathcal{H}$ is non-empty?
\end{que}

When $\mathcal{H}$ is non-empty, as we shall assume from now on, it is an open subset of $C^{\infty}(X,\mathbb{R})$.  The space of almost calibrated $(1,1)$ forms can be written in terms of the Lagrangian phase operator as
\begin{equation}\label{eq: HdefBranch}
\mathcal{H} = \bigsqcup_{\{\beta \in (-n\frac{\pi}{2}, n \frac{\pi}{2}) : \beta = \hat{\theta} \mod 2\pi \}} \{ \phi \in C^{\infty}(X)~|~ |\Theta_{\omega}(\alpha_{\phi}) - \beta| < \frac{\pi}{2}\}
\end{equation}

An easy argument using the maximum principle shows that either $\mathcal{H}$ is empty, or the disjoint union on the right hand side of~\eqref{eq: HdefBranch} collapses to only one branch \cite{CXY}.  That is, there is a unique $\beta \in (-n\frac{\pi}{2}, n \frac{\pi}{2})$ such that $\beta = \hat{\theta} \mod 2\pi$ and
\begin{equation}\label{eq: liftAng}
\mathcal{H}= \{ \phi \in C^{\infty}(X, \mathbb{R})~|~ |\Theta_{\omega}(\alpha_{\phi}) - \beta| < \frac{\pi}{2}\} \ne \emptyset
\end{equation}

\begin{defn}\label{defn: liftedAngle}
Suppose $\mathcal{H}\ne \emptyset$.  
\begin{enumerate}
\item[(i)] We define the lifted angle, denoted $\hat{\theta}$, to be the uniquely defined {\em lifted phase} $\hat{\theta} \in (-n\frac{\pi}{2}, n \frac{\pi}{2})$ such that~\eqref{eq: liftAng} holds.
\item[(ii)] We say that $[\alpha]$ has {\em hypercritical phase} (with respect to $\omega$) if the lifted phase $\hat{\theta} \in ((n-1)\frac{\pi}{2}, n \frac{\pi}{2})$.
\end{enumerate}
\end{defn}

The space $\mathcal{H}$ is related under mirror symmetry to the space of positive Lagrangians introduced by Solomon \cite{Sol}.  Building off that analogy we can define a Riemannian structure on $\mathcal{H}$; since $\mathcal{H}$ is an open subset of the vector space $C^{\infty}(X,\mathbb{R})$, the tangent space $T_{\phi}\mathcal{H}$ is naturally identified with $C^{\infty}(X,\mathbb{R})$.  Define an inner product by
\[
T_{\phi}\mathcal{H} \ni \psi_1, \psi_2 \longmapsto \langle \psi_1, \psi_2 \rangle_{\phi} := \int_{X} \psi_1\psi_2 {\rm Re}(e^{-\sqrt{-1}\hat{\theta}}(\omega + \sqrt{-1}\alpha_{\phi})^n)
\]
The following result, which is the complex analogue of a result of Solomon \cite{Solomon14}, shows that the ``quotient group" $\mathcal{G}_{U}^{\mathbb{C}}/\mathcal{G}_{U}$ is non-positively curved.

\begin{thm}[Chu-C.-Lee, \cite{CCL}]
The infinite dimensional Riemannian manifold $(\mathcal{H}, \langle \cdot, \cdot \rangle)$ is non-positively curved.
\end{thm}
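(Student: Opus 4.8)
The plan is to treat $\mathcal{H}$ as a weak Riemannian manifold modeled on the Fr\'echet space $C^\infty(X,\mathbb{R})$ and to compute its Levi--Civita connection and curvature tensor explicitly, exhibiting the sectional curvature as a manifestly non-positive quantity. Since $\mathcal{H}$ is open in $C^\infty(X,\mathbb{R})$, I identify every tangent space $T_\phi\mathcal{H}$ with $C^\infty(X,\mathbb{R})$ and regard a vector field as a $\phi$-dependent function; the ``constant'' fields (those independent of $\phi$) have vanishing naive derivative, so the whole connection is encoded in a Christoffel form $\Gamma_\phi(\cdot,\cdot)$. Writing $d\mu_\phi := {\rm Re}(e^{-\sqrt{-1}\hat\theta}(\omega+\sqrt{-1}\alpha_\phi)^n)$, a positive volume form by almost-calibratedness, the metric is $\langle\psi_1,\psi_2\rangle_\phi=\int_X\psi_1\psi_2\,d\mu_\phi$. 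The first step is the first variation of the metric: differentiating along $\phi\mapsto\phi+t\psi$ and using $\tfrac{d}{dt}\alpha_{\phi+t\psi}=\ddb\psi$ gives $\tfrac{d}{dt}\big|_{0}\,d\mu_{\phi+t\psi}=(L_\phi\psi)\,d\mu_\phi$ for a second-order linear operator $L_\phi$, namely the linearization of the Lagrangian phase operator $\Theta_\omega$ renormalized by the positive density $d\mu_\phi$.

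The crucial structural point is that $L_\phi$ is \emph{self-adjoint} with respect to $d\mu_\phi$. This is not automatic, but it follows from the observation that $\omega+\sqrt{-1}\alpha_\phi$ is a closed form (since $\alpha$ is closed and $\ddb\phi$ is exact), so $(\omega+\sqrt{-1}\alpha_\phi)^{n-1}$ is closed and one may integrate $\int_X\psi_2\,\ddb\psi_1\wedge(\cdots)$ by parts twice with no error terms, transferring both derivatives onto $\psi_2$. Equivalently, self-adjointness reflects the fact that the dHYM operator is the gradient of the Kempf--Ness functional, consistent with the GIT picture of Section~\ref{sec: GIT}.

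Given self-adjointness I apply the Koszul formula. Metric compatibility and torsion-freeness for constant fields reduce, by the usual cyclic manipulation, to solving $\int_X\xi\zeta\,(L_\phi\psi)\,d\mu_\phi=\int_X\bigl(\Gamma_\phi(\psi,\xi)\zeta+\xi\,\Gamma_\phi(\psi,\zeta)\bigr)\,d\mu_\phi$ \emph{pointwise} for $\Gamma_\phi$; self-adjointness of $L_\phi$ is exactly what guarantees a pointwise solution. The answer is minus the carr\'e du champ of $L_\phi$, namely $\Gamma_\phi(\psi_1,\psi_2)=-\tfrac12\bigl(L_\phi(\psi_1\psi_2)-\psi_1 L_\phi\psi_2-\psi_2 L_\phi\psi_1\bigr)$, a first-order bilinear expression whose leading part is $G_\phi(\nabla\psi_1,\nabla\psi_2)$ for the cometric $G_\phi$ with eigenvalues $(1+\lambda_i^2)^{-1}$ in coordinates diagonalizing $\alpha_\phi$. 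This simultaneously yields the geodesic equation $\ddot\phi+\Gamma_\phi(\dot\phi,\dot\phi)=0$, the complex analogue of Solomon's equation for positive Lagrangians \cite{Solomon14}.

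Finally I would feed $\Gamma_\phi$ into the curvature of the flat model, $R(\psi_1,\psi_2)\psi_3=(D_{\psi_1}\Gamma_\phi)(\psi_2,\psi_3)-(D_{\psi_2}\Gamma_\phi)(\psi_1,\psi_3)+\Gamma_\phi(\psi_1,\Gamma_\phi(\psi_2,\psi_3))-\Gamma_\phi(\psi_2,\Gamma_\phi(\psi_1,\psi_3))$, and evaluate the sectional curvature $K(\psi_1,\psi_2)=\langle R(\psi_1,\psi_2)\psi_2,\psi_1\rangle$. \textbf{The main obstacle is this last step}: inserting the explicit $\Gamma_\phi$ and reorganizing the resulting second-order terms so that the non-sign-definite pieces cancel and leave a manifestly non-positive expression. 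As in the case of the space of K\"ahler metrics, I expect the outcome to take the form $K(\psi_1,\psi_2)=-\tfrac14\int_X|\{\psi_1,\psi_2\}_\phi|^2\,d\mu_\phi\le 0$, where $\{\psi_1,\psi_2\}_\phi$ is the antisymmetric bracket built from the symbol cometric $G_\phi$ and the complex structure (schematically $G_\phi(\nabla\psi_1,J\nabla\psi_2)$). Verifying that the derivative terms $D\Gamma_\phi$ and the quadratic terms $\Gamma_\phi(\cdot,\Gamma_\phi(\cdot,\cdot))$ combine into precisely this perfect square, keeping track of the $\cos(\Theta_\omega(\alpha_\phi)-\hat\theta)$ weight in $d\mu_\phi$, is the technical heart of the argument.
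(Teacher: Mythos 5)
Your setup is correct, and it is in fact the route taken in \cite{CCL} (note the survey itself gives no proof of this theorem; it is quoted from \cite{CCL}, whose argument complexifies Solomon's computation in \cite{Solomon14}): work on the open set $\mathcal{H}\subset C^{\infty}(X,\mathbb{R})$, compute the first variation of the density $d\mu_{\phi}$, deduce self-adjointness of $L_{\phi}$ by integrating by parts against the closed $(n-1,n-1)$-form ${\rm Im}(e^{-\sqrt{-1}\hat{\theta}}(\omega+\sqrt{-1}\alpha_{\phi})^{n-1})$, and identify the Christoffel operator as minus the carr\'e du champ of $L_{\phi}$. Your $\Gamma_{\phi}$ does reproduce the geodesic equation~\eqref{eq: geoEq}, which is a genuine consistency check, and the integral (not pointwise, but that is harmless) verification of metric compatibility goes through exactly as you indicate. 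The problem is that your proposal stops precisely where the theorem begins. The curvature computation, which you defer with ``I expect the outcome to take the form $K=-\tfrac14\int_X|\{\psi_1,\psi_2\}_{\phi}|^2\,d\mu_{\phi}$,'' is the entire mathematical content of non-positivity: nothing you have written forces the $D\Gamma_{\phi}$ terms and the quadratic $\Gamma_{\phi}(\cdot,\Gamma_{\phi}(\cdot,\cdot))$ terms to assemble into a perfect square, and an announced expectation by analogy is not a proof.

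Moreover, the analogy with the Mabuchi metric is structurally weaker than you suggest, and your one concrete claim about the cometric is wrong. In the space of K\"ahler potentials the density $\omega_{\phi}^{n}$ and the cometric $\omega_{\phi}^{-1}$ come from the \emph{same} form, which is what makes the cancellations close up; here the density comes from the real part and the first-order structure of $\Gamma_{\phi}$ from the imaginary part of $e^{-\sqrt{-1}\hat{\theta}}(\omega+\sqrt{-1}\alpha_{\phi})^{\bullet}$. Diagonalizing $\alpha_{\phi}$ against $\omega$ with eigenvalues $\lambda_{i}$, one computes
\[
\Gamma_{\phi}(\psi,\psi)=\sum_{i}\frac{\tan\big(\Theta_{\omega}(\alpha_{\phi})-\hat{\theta}\big)-\lambda_{i}}{1+\lambda_{i}^{2}}\,|\del_{i}\psi|^{2},
\]
so the eigenvalues of $G_{\phi}$ are $\big(\tan(\Theta_{\omega}(\alpha_{\phi})-\hat{\theta})-\lambda_{i}\big)/(1+\lambda_{i}^{2})$, not $(1+\lambda_{i}^{2})^{-1}$; they are not even sign-definite in general, so a bracket built ``schematically from $G_{\phi}$ and $J$'' does not obviously yield a non-negative integrand, and any perfect-square identity must carry these $\tan$-weights (relatedly, $L_{\phi}$ is not simply the renormalized linearization of $\Theta_{\omega}$: it combines the variations of the phase and of the volume density $\prod_{i}(1+\lambda_{i}^{2})^{1/2}$). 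The proof in \cite{CCL} consists precisely in carrying this computation through with the correct weights and extracting the sign from the resulting expression; until you do that, you have the right plan but a proof with its central step missing.
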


Associated to the Riemannian structure is a notion of geodesics.  A map $[0,1] \ni s \mapsto \phi(s) \in \mathcal{H}$ is a geodesic if and only if it solves
\begin{equation}\label{eq: geoEq}
\ddot{\phi} + \frac{n\sqrt{-1}\del \dot{\phi} \wedge \dbar\dot{\phi} \wedge {\rm Im}(e^{-\sqrt{-1}\hat{\theta}}(\omega+\sqrt{-1}\alpha_{\phi})^{n-1})}{{\rm Re}(e^{-\sqrt{-1}\hat{\theta}}(\omega+\sqrt{-1}\alpha_{\phi})^{n})}=0
\end{equation}
where $\dot{\phi}= \frac{\del \phi}{\del s}$, and $\ddot{\phi} = \frac{\del^2 \phi}{\del s^2}$.  It turns out \cite{CY18} that the geodesic equation~\eqref{eq: geoEq} can be written as a fully nonlinear degenerate elliptic equation.  Consider the complex manifold $\mathcal{X} := X \times \mathcal{A}$ where $\mathcal{A} := \{ e^{-1} < |t| < 1\} \subset \mathbb{C}$, and let $\pi_{\mathcal{A}} : \mathcal{X} \rightarrow \mathcal{A}$, $\pi_{X}: \mathcal{X}\rightarrow X$ be the projections.  Define
\begin{equation}\label{eq: refMet}
\hat{\omega}_0 := \pi_{X}^*\omega, \qquad \hat{\omega}_{\epsilon} = \pi_{X}^* \omega + \epsilon^{2}\sqrt{-1}dt \wedge d\bar{t}.
\end{equation}
We have
\begin{lem}
Let $\phi_0, \phi_1 \in \mathcal{H}$.  Suppose $\Phi(x,t) = \Phi(x,|t|)$ is an $S^1$ invariant solution of the boundary value problem
\begin{equation}\label{eq: geoBdry}
\begin{aligned}
&{\rm Im}\left(e^{-\sqrt{-1}\hat{\theta}}\left(\hat{\omega}_0 +\sqrt{-1}(\pi_{X}^*\alpha + \DDb \Phi(x,t))\right)^{n+1}\right)=0\\
&\Phi(x,t)\big|_{|t|=1} = \phi_0 \qquad \Phi(x,t)\big|_{|t|=e^{-1}} = \phi_1
\end{aligned}
\end{equation}
where $\DDb$ denotes the complex Hessian on $\mathcal{X}$.  Then $\phi(x,s) := \Phi(x, -\log|t|)$ solves~\eqref{eq: geoEq} and is a geodesic in $\mathcal{H}$.  Conversely, if $\phi(x,s)$ is a geodesic in $\mathcal{H}$, then $\Phi(x,t) = \Phi(x,|t|) = \phi(x,e^{-s})$ is an $S^1$ invariant solution of~\eqref{eq: geoBdry}.
\end{lem}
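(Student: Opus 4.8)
The plan is to substitute the $S^1$-invariant ansatz into \eqref{eq: geoBdry} and compute the top-degree form on $\mathcal{X} = X \times \mathcal{A}$ directly, showing that it collapses to \eqref{eq: geoEq}; the converse is then just reading the same identity backwards. First I would introduce the change of variables $s = -\log|t|$, so that $|t| = e^{-s}$ and $\mathcal{A} = \{e^{-1} < |t| < 1\}$ corresponds to $s \in (0,1)$, with $|t| = 1$ giving $s = 0$ and $|t| = e^{-1}$ giving $s = 1$; this immediately matches the boundary data in \eqref{eq: geoBdry} with $\phi(\cdot,0) = \phi_0$, $\phi(\cdot,1) = \phi_1$. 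Writing $\Phi(x,t) = \phi(x,s)$ and using $\del_t s = -\tfrac{1}{2t}$, $\dbar_{\bar t} s = -\tfrac{1}{2\bar t}$, I would compute the complex Hessian $\DDb\Phi$ and decompose it into a pure-$X$ piece equal to $\ddb\phi$ at fixed $s$, mixed pieces proportional to $\tfrac{1}{\bar t}(\del\dot\phi)\wedge d\bar t$ and $\tfrac{1}{t}\,dt\wedge(\dbar\dot\phi)$, and a $dt\wedge d\bar t$ piece proportional to $\tfrac{1}{|t|^2}\ddot\phi$. Combined with $\hat\omega_0 = \pi_X^*\omega$ and $\pi_X^*\alpha$, the pure-$X$ part of the form inside \eqref{eq: geoBdry} is exactly $\eta := \omega + \sqrt{-1}\alpha_\phi$.

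Next I would expand $\Xi^{n+1}$, where $\Xi := \hat\omega_0 + \sqrt{-1}(\pi_X^*\alpha + \DDb\Phi) = \eta + P$ and $P$ gathers the $t$-involving terms. The crucial simplification is a dimension count on the $(n+1)$-fold product: since $\eta$ lives on the $n$-dimensional factor $X$ one has $\eta^{n+1} = 0$, while every term of $P$ carries at least one of $dt, d\bar t$ and a top form admits at most one $dt$ and one $d\bar t$, so $P\wedge P \wedge P = 0$. Moreover $\eta^n$ annihilates the $(1,0)$- and $(0,1)$-mixed pieces of $P$ (they would require $n+1$ holomorphic differentials among $n$ variables), so only the $dt\wedge d\bar t$ part of $P$ enters at first order and only the product of the two mixed pieces enters at second order. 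A short wedge computation then gives, for a universal constant,
\[
\Xi^{n+1} = -\frac{n+1}{4|t|^2}\Big[\ddot\phi\,\eta^n + n\,\eta^{n-1}\wedge(\del\dot\phi)\wedge(\dbar\dot\phi)\Big]\wedge dt\wedge d\bar t.
\]

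I would then multiply by $e^{-\sqrt{-1}\hat\theta}$, factor out the real form $\sqrt{-1}\,dt\wedge d\bar t$, and take imaginary parts, using that $\ddot\phi$ and $\sqrt{-1}(\del\dot\phi)\wedge(\dbar\dot\phi)$ are real together with the elementary identity ${\rm Re}(-\sqrt{-1}w) = {\rm Im}(w)$ to convert the real part on the $(\del\dot\phi)\wedge(\dbar\dot\phi)$ term into an imaginary part of $\eta^{n-1}$. The condition ${\rm Im}(e^{-\sqrt{-1}\hat\theta}\Xi^{n+1}) = 0$ in \eqref{eq: geoBdry} then becomes
\[
\ddot\phi\,{\rm Re}\big(e^{-\sqrt{-1}\hat\theta}\eta^n\big) + n\,\sqrt{-1}(\del\dot\phi)\wedge(\dbar\dot\phi)\wedge{\rm Im}\big(e^{-\sqrt{-1}\hat\theta}\eta^{n-1}\big) = 0,
\]
and dividing by ${\rm Re}(e^{-\sqrt{-1}\hat\theta}\eta^n)$ recovers \eqref{eq: geoEq} exactly.

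The main obstacle I anticipate is not the algebra but the well-posedness of this last division: it is legitimate only when ${\rm Re}(e^{-\sqrt{-1}\hat\theta}(\omega + \sqrt{-1}\alpha_\phi)^n) > 0$, i.e. when $\phi(\cdot,s) \in \mathcal{H}$ for every $s$, so that \eqref{eq: geoEq} is the genuine geodesic equation. I would therefore need to argue that a solution $\Phi$ of \eqref{eq: geoBdry} with boundary values $\phi_0, \phi_1 \in \mathcal{H}$ stays almost calibrated along the whole path; the degeneracy of the equation \eqref{eq: geoBdry} built from $\hat\omega_0$ in \eqref{eq: refMet} (rather than the non-degenerate $\hat\omega_\epsilon$) corresponds precisely to the degenerate ellipticity of \eqref{eq: geoEq}. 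The converse is then immediate: given a geodesic $\phi(x,s)$, the function $\Phi(x,t) := \phi(x,-\log|t|)$ is manifestly $S^1$-invariant, and reversing the identity above shows it solves \eqref{eq: geoBdry}, the boundary conditions matching under the same change of variables.
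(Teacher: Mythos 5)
Your proposal is correct and follows essentially the same route as the paper's source for this lemma (the survey states it without proof, deferring to \cite{CY18}): the $S^1$-invariant substitution $s=-\log|t|$, the wedge-degree count killing $\eta^{n+1}$, the mixed terms against $\eta^{n}$, and all powers $P^{k}$ with $k\geq 3$, and your resulting identity
\[
\Xi^{n+1} = -\frac{n+1}{4|t|^{2}}\Bigl[\ddot{\phi}\,\eta^{n} + n\,\eta^{n-1}\wedge\del\dot{\phi}\wedge\dbar\dot{\phi}\Bigr]\wedge dt\wedge d\bar{t}
\]
all check out, and taking ${\rm Im}(e^{-\sqrt{-1}\hat{\theta}}\,\cdot\,)$ with ${\rm Re}(-\sqrt{-1}w)={\rm Im}(w)$ reduces \eqref{eq: geoBdry} exactly to \eqref{eq: geoEq}, with the converse obtained by reversing the identity. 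The caveat you raise about dividing by ${\rm Re}(e^{-\sqrt{-1}\hat{\theta}}(\omega+\sqrt{-1}\alpha_{\phi})^{n})$ is consistent with the paper's framing, which treats these as (putative) smooth geodesics lying in $\mathcal{H}$, so it is a correct reading of the implicit hypothesis rather than a gap.
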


\begin{rk}\label{rk: noReg}
{\rm Equation~\eqref{eq: geoBdry} is a fully nonlinear, degenerate elliptic equation.  In particular, the existence of smooth solutions is not guaranteed.  In fact, as pointed out in \cite{CCL}, it follows from work of Lempert-Vivas \cite{LV13} and Darvas-Lempert \cite{DL12} that solutions to~\eqref{eq: geoBdry} can have {\em at best} $C^{1,1}$ regularity in general.}
\end{rk}

A real analogue of~\eqref{eq: geoBdry} for domains in $\mathbb{R}^n$ was studied previously by Rubinstein-Solomon \cite{RuSol} and Darvas-Rubinstein \cite{DarRu}.  Let $\Omega \subset \mathbb{R}^n$ be a domain.  Consider the space of Lagrangian graphs $\Omega \ni x \mapsto (x, \nabla f(x)) \in \mathbb{R}^{2n} = \mathbb{C}^n$, where $f: \Omega \rightarrow \mathbb{R}$.  Recall that the phase of a Lagrangian graph is given by
\[
\Theta(D_x^2f) = \sum_{i=1}^{n}\arctan(\lambda_i)
\]  
where $\lambda _i, 1 \leq i \leq n$ are the eigenvalues of $D^{2}f$.  Motivated by Solomon's Riemannian metric on the space of almost calibrated Lagrangians,  Rubinstein-Solomon \cite{RuSol} studied the following PDE problem on $\Omega \times [0,1] \subset \mathbb{R}^{n+1}$; let $(x,t)\in \mathbb{R}^n\times \mathbb{R}$, be coordinates and denote by $\mathbb{I}_n= \sum_i dx_i \otimes dx_i$ the $n\times n$ identity matrix, and $\pi(x,t) = x$ the projection from $\mathbb{R}^{n+1}$ to $\mathbb{R}^n$.  Suppose $f_0, f_1$ are two almost calibrated potentials with
\[
\Theta(D_x^2 f_i) \in (c-\frac{\pi}{2}, c+\frac{\pi}{2}) \quad i=1,2.
\]
Consider the following PDE problem
\begin{equation}\label{eq: RSeq}
\begin{aligned}
&{\rm Im}\left(e^{-\sqrt{-1}\hat{\theta}}\det \left(\pi^*\mathbb{I}_n +\sqrt{-1} D_{x,t}^2 F(x,t)\right)\right)=0\\
&{\rm Re} \left(e^{-\sqrt{-1}\hat{\theta}}\det \left(\mathbb{I}_n +\sqrt{-1} D_{x}^2 F(x,t)\right)\right)>0\\
\end{aligned}
\end{equation}
where $\hat{\theta} = c \mod 2\pi$, with boundary data
\begin{equation}\label{eq: RSbdry}
\begin{aligned}
&F(x,0) =f_0(x) \qquad F(x,1) = f_1(x), \\
&F(x,t)|_{\del \Omega \times [0,1]} = (1-t)f_0(x) + t f_1(x)
\end{aligned}
\end{equation}
To explain the notation, the first line in~\eqref{eq: RSeq} consists of a determinant of a complex valued $(n+1)\times (n+1)$ matrix, while the second line (which only fixes $\hat{\theta}$) is a determinant of an $n\times n$ complex matrix.  Regarding the boundary values, there are several natural candidates for boundary values on the spatial boundary $\del \Omega \times [0,1]$.  While imposing linear boundary data is natural analytically, a more geometric approach would be to impose a kind of Neumann boundary data which makes~\eqref{eq: RSeq} the geodesic equation for Lagrangian graphs.  More precisely, in calculating the geodesic equation for Lagrangian graphs starting from Solomon's metric there is an integration by parts, which in the case of a domain introduces a non-trivial boundary term.  Forcing this boundary term to vanish would yield a a nonlinear Neumann type boundary data on $\del \Omega \times [0,1]$ which is natural geometrically.   

The approach of Rubinstein-Solomon \cite{RuSol} is to recast~\eqref{eq: RSeq} in terms of an elliptic operator they call the lifted space-time Lagrangian angle, which fits into the robust Dirichlet Duality theory of Harvey-Lawson \cite{HL}.  By extending the Harvey-Lawson theory to domains with corners the authors prove
\begin{thm}[Rubinstein-Solomon, \cite{RuSol}]
Suppose $\Omega \subset \mathbb{R}^n$ is bounded and strictly convex.  Then~\eqref{eq: RSeq} admits a continuous viscosity solution in the sense of Harvey-Lawson.  Furthermore, $F(\cdot, t)$ is Lipschitz in $t$, and, if $|c|\in [n\frac{\pi}{2}, (n+1)\frac{\pi}{2})$, then $F$ is Lipschitz in $x$ and $t$.
\end{thm}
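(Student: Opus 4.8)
The plan is to realize~\eqref{eq: RSeq} as the Dirichlet problem for a degenerate elliptic \emph{subequation} in the sense of Harvey-Lawson~\cite{HL} and to solve it by Perron's method. First I would recast the system as a single scalar equation: the real-part inequality in the second line of~\eqref{eq: RSeq} serves only to fix the branch on which the spatial phase $\Theta(D_x^2 F)$ lies in $(c-\frac{\pi}{2}, c+\frac{\pi}{2})$, and on this branch the first line is equivalent to $\Theta_{ST}(D_{x,t}^2 F) = c$, where $\Theta_{ST}$ is the lifted space-time Lagrangian angle, i.e. the continuous branch of the space-time phase associated to the background $\pi^*\mathbb{I}_n$. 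Because $\pi^*\mathbb{I}_n$ is degenerate along the $t$-direction, $\Theta_{ST}$ is degenerate elliptic rather than uniformly elliptic; this is precisely the generality that the Harvey-Lawson theory accommodates, since its only structural requirement is monotonicity in the Hessian variable.

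Next I would verify that the superlevel set $\mathcal{F} := \{A \in \mathrm{Sym}^2(\mathbb{R}^{n+1}) : \Theta_{ST}(A) \ge c\}$ is a subequation: it is closed, and the positivity condition $\mathcal{F} + \mathcal{P} \subset \mathcal{F}$ (with $\mathcal{P}$ the cone of nonnegative matrices) holds because $\Theta_{ST}$ is monotone. One then identifies the Harvey-Lawson dual $\widetilde{\mathcal{F}} = \{A : \Theta_{ST}(A) \ge -c\}$ (up to the sign and lifting conventions). With $\mathcal{F}$ a subequation, the comparison principle holds automatically, so the existence of a unique continuous solution reduces to producing $\mathcal{F}$- and $\widetilde{\mathcal{F}}$-barriers at each boundary point. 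Since $\Theta_{ST}$ is monotone, both $\mathcal{F}$ and $\widetilde{\mathcal{F}}$ contain $\mathcal{P}$, so ordinary strict convexity of $\Omega$ furnishes strict boundary convexity --- and hence barriers --- on the lateral face $\partial\Omega \times [0,1]$, while the prescribed affine-in-$t$ data provides explicit barriers on the caps $\Omega \times \{0,1\}$. Perron's method, $F = \sup\{u : u\ \mathcal{F}\text{-subharmonic},\ u \le \varphi \text{ on } \partial(\Omega\times[0,1])\}$, then yields the continuous viscosity solution.

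For the regularity statements I would argue by comparison. Lipschitz continuity in $t$ follows from a translation argument: the equation is autonomous in $t$, so $F(x,t+s)$ solves the same equation on the shifted slab, and comparing it with $F(x,t)$ using the Lipschitz-in-$t$ modulus of the boundary data yields $|F(x,t) - F(x,t')| \le L|t-t'|$. The full Lipschitz bound in $(x,t)$ under the hypercritical hypothesis $|c| \in [n\frac{\pi}{2}, (n+1)\frac{\pi}{2})$ exploits convexity of the operator in this range: for space-time phase above $n\frac{\pi}{2}$ the superlevel set $\{\Theta_{ST} \ge c\}$ is convex (equivalently $\Theta_{ST}$ is concave there), so the standard double-differencing argument for convex fully nonlinear equations, together with the boundary barriers, produces a global gradient estimate and hence joint Lipschitz control.

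The main obstacle, I expect, is that $\Omega \times [0,1]$ is not smooth: it carries a codimension-two corner along $\partial\Omega \times \{0,1\}$ where the lateral face meets the flat caps, and the Harvey-Lawson existence-and-comparison machinery is developed for $C^2$ boundaries. One must therefore extend the theory to domains with corners, constructing sub- and supersolution barriers that survive at the edges, where the outward conormal jumps and the two boundary-convexity conditions must be reconciled simultaneously, and verifying that the Perron envelope attains the boundary data uniformly up to the corner. Compounding this is the degeneracy of $\Theta_{ST}$ in the $t$-direction, which makes the top and bottom caps characteristic; the absence of ellipticity transverse to them is exactly why the $t$-Lipschitz bound must come from the autonomous-translation/barrier argument above rather than from standard elliptic boundary regularity.
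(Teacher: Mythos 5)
Your proposal follows essentially the same route the paper attributes to Rubinstein--Solomon: recasting~\eqref{eq: RSeq} as the constant right-hand-side Dirichlet problem for the degenerate elliptic lifted space-time Lagrangian angle, solving it via Harvey--Lawson Dirichlet duality and Perron's method with barriers from strict convexity, and correctly identifying the extension of that theory to the corner domain $\Omega\times[0,1]$ (with its characteristic caps) as the main new difficulty, exactly as the survey indicates. The only caveat is the literal claim that $\mathcal{F}$ and $\widetilde{\mathcal{F}}$ contain $\mathcal{P}$ (false for $c>0$ since $\Theta_{ST}(0)=0$); what you need, and what positivity $\mathcal{F}+\mathcal{P}\subset\mathcal{F}$ actually gives, is that large multiples of a strictly convex defining function furnish the boundary barriers.
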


In the case $|c| \in [n\frac{\pi}{2}, (n+1)\frac{\pi}{2})$, Darvas-Rubinstein \cite{DarRu} showed that the solution of~\eqref{eq: RSeq} can be obtained from an envelope construction, extending classical work of Kiselman \cite{Kis}.  Ross-Witt Nystr\"om have subsequently proven far reaching generalizations of this result \cite{RWN}.  While the condition $|c| \in [n \frac{\pi}{2}, (n+1)\frac{\pi}{2})$ is natural analytically, in geometric situations only the case $|c| < n\frac{\pi}{2}$ can arise.   Dellatorre \cite{Del} extended the techniques of Rubinstein-Solomon to the case of Riemannian manifolds, while Jacob \cite{Jac} applied similar techniques to establish the existence of continuous viscosity solutions to the geodesic equation~\eqref{eq: geoBdry}.

As we will explain below, in order to have applications to the existence of solutions to the dHYM equation we need to obtain solutions to the geodesic equation~\eqref{eq: geoBdry} with significantly more regularity than can be obtained by general viscosity techniques.  For this reason it is convenient to introduce an elliptic regularization of~\eqref{eq: geoBdry}, which we call the $\epsilon$-geodesic equation.  Fix $\phi_0, \phi_1 \in \mathcal{H}$.  A curve $\phi(s) \in \mathcal{H}$ is an $\epsilon$-geodesic if the associated function $\Phi(x,t) = \phi(x,e^{-s})$ on $\mathcal{X}$ solves the equation
\begin{equation}\label{eq: epsGeo}
\begin{aligned}
&{\rm Im}\left(e^{-\sqrt{-1}\hat{\theta}}\left(\hat{\omega}_\epsilon +\sqrt{-1}(\pi_{X}^*\alpha + \DDb \Phi_{\epsilon}(x,t))\right)^{n+1}\right)=0\\
&\Phi_{\epsilon}(x,t)\big|_{|t|=1} = \phi_0 \qquad \Phi_{\epsilon}(x,t)\big|_{|t|=e^{-1}} = \phi_1
\end{aligned}
\end{equation}
where $\hat{\omega}_{\epsilon}$ is defined in~\eqref{eq: refMet}.  

Let us return to our discussion of the infinite dimensional GIT problem.  Define a complex $1$-form on $\mathcal{H}$ by the following formula
\[
T_{\phi}\mathcal{H} \ni \psi \mapsto dCY_{\mathbb{C}}(\psi) = \int_{X} \psi (\omega+\sqrt{-1}\alpha_{\phi})^{n}.
\]
After fixing a base point, this $1$-form integrates to a well-defined functional $CY_{\mathbb{C}}: \mathcal{H} \rightarrow \mathbb{C}$ \cite{CY18}.  More explicitly, assume the reference form $\alpha$ is almost calibrated, so that we can take $0 \in \mathcal{H}$ as the reference point.  Then we have the formula \cite{CY18}
\[
CY_{\mathbb{C}}(\phi) = \frac{1}{n+1}\sum_{j=0}^{n} \int_{X}\phi(\omega+\sqrt{-1}\alpha_{\phi})^j\wedge (\omega+\sqrt{-1}\alpha)^{n-j}.
\]  

The crucial observation is the following

\begin{lem}[C.- Yau, \cite{CY18}]\label{lem: func}
With notation as above, we have
\begin{enumerate}
\item[(i)] The function 
\[
\mathcal{C}(\phi) := {\rm Re}\left(e^{-\sqrt{-1}\hat{\theta}} CY_{\mathbb{C}}(\phi)\right)
\]
is affine along (putative) smooth geodesics in $\mathcal{H}$.
\item[(ii)] The function 
\[
\mathcal{J} := - {\rm Im}\left(e^{-\sqrt{-1}\hat{\theta}} CY_{\mathbb{C}}(\phi)\right)
\]
is the Kempf-Ness functional for the GIT problem associated to the dHYM equation.  In particular, we have
\[
d\mathcal{J}(\psi) = -\int_{X} \psi {\rm Im}\left(e^{-\sqrt{-1}\hat{\theta}}(\omega+\sqrt{-1}\alpha_{\phi})^{n}\right)
\]
and $\mathcal{J}$ is convex along (putative) smooth geodesics in the space $\mathcal{H}$.
\item[(iii)] Suppose $[\alpha]$ has hypercritical phase, so that $\hat{\theta} \in ((n-1)\frac{\pi}{2}, n\frac{\pi}{2})$.  Define the functional
\[
Z(\phi): = e^{-\sqrt{-1}\frac{n \pi}{2}} CY_{\mathbb{C}}(\phi).
\]
Then ${\rm Re}(Z), {\rm Im}(Z)$ are concave along (putative) smooth geodesics in $\mathcal{H}$.
\end{enumerate}
\end{lem}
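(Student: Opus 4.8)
The plan is to deduce all three items from a single second-variation computation for the complex-valued functional $CY_{\mathbb{C}}$ along a (putative) smooth geodesic $\phi(s)\in\mathcal{H}$, and then obtain (i)--(iii) by selecting the appropriate phase and taking real or imaginary parts. Write $\chi_{\phi}:=\omega+\sqrt{-1}\alpha_{\phi}$, so that $\frac{d}{ds}\chi_{\phi}=\sqrt{-1}\,\ddb\dot{\phi}$ and, from the definition of $dCY_{\mathbb{C}}$, $\frac{d}{ds}CY_{\mathbb{C}}=\int_{X}\dot{\phi}\,\chi_{\phi}^{n}$. Differentiating once more and integrating by parts, using that $\chi_{\phi}^{n-1}$ is $d$-closed (so the $\del$ and $\dbar$ may be moved freely), I would obtain
\[
\frac{d^{2}}{ds^{2}}CY_{\mathbb{C}}=\int_{X}\ddot{\phi}\,\chi_{\phi}^{n}-n\sqrt{-1}\int_{X}P\wedge\chi_{\phi}^{n-1},\qquad P:=\sqrt{-1}\,\del\dot{\phi}\wedge\dbar\dot{\phi}\ge 0.
\]
This identity holds along any curve; the geodesic hypothesis enters only through the term $\ddot{\phi}$.

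Next I would pass to a pointwise computation. Fixing $p\in X$ and diagonalizing $\omega$ and $\alpha_{\phi}$ simultaneously as in the definition of the Lagrangian phase operator, write $\theta_{k}=\arctan\lambda_{k}$, $\rho_{k}=\sqrt{1+\lambda_{k}^{2}}$, $\Theta=\sum_{k}\theta_{k}=\Theta_{\omega}(\alpha_{\phi})$, $r=\prod_{k}\rho_{k}$, and $p_{k}=|\partial_{k}\dot{\phi}|^{2}\ge 0$. Then, for the real volume form $dV$,
\[
\chi_{\phi}^{n}=n!\,r\,e^{\sqrt{-1}\Theta}\,dV,\qquad P\wedge\chi_{\phi}^{n-1}=(n-1)!\sum_{k}p_{k}\frac{r}{\rho_{k}}\,e^{\sqrt{-1}(\Theta-\theta_{k})}\,dV.
\]
Solving the geodesic equation~\eqref{eq: geoEq} for $\ddot{\phi}$ gives $\ddot{\phi}=-\frac{\sum_{k}p_{k}(r/\rho_{k})\sin(\Theta-\theta_{k}-\hat{\theta})}{r\cos(\Theta-\hat{\theta})}$, and substituting this into the second-variation formula for $e^{-\sqrt{-1}\gamma}CY_{\mathbb{C}}$, with $\gamma$ an arbitrary phase, the heart of the argument is a trigonometric collapse: after applying sum-to-product identities the $2\Theta$-dependent cross terms cancel and each summand reduces to $\cos\theta_{k}$, yielding the master identity
\[
\frac{d^{2}}{ds^{2}}\left(e^{-\sqrt{-1}\gamma}CY_{\mathbb{C}}\right)=-\sqrt{-1}\,n!\,e^{-\sqrt{-1}(\gamma-\hat{\theta})}\,Q,\qquad Q:=\int_{X}\frac{\sum_{k}p_{k}(r/\rho_{k})\cos\theta_{k}}{\cos(\Theta-\hat{\theta})}\,dV.
\]
Since $\theta_{k}\in(-\frac{\pi}{2},\frac{\pi}{2})$ forces $\cos\theta_{k}>0$, and $\cos(\Theta-\hat{\theta})>0$ on $\mathcal{H}$ by~\eqref{eq: liftAng}, the number $Q$ is real and nonnegative.

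From this master identity the three statements follow by inspection. Taking real and imaginary parts gives $\frac{d^{2}}{ds^{2}}{\rm Re}(e^{-\sqrt{-1}\gamma}CY_{\mathbb{C}})=-n!\,Q\sin(\gamma-\hat{\theta})$ and $\frac{d^{2}}{ds^{2}}{\rm Im}(e^{-\sqrt{-1}\gamma}CY_{\mathbb{C}})=-n!\,Q\cos(\gamma-\hat{\theta})$. For (i), taking $\gamma=\hat{\theta}$ makes $\sin(\gamma-\hat{\theta})=0$, so $\mathcal{C}$ has vanishing second derivative and is affine. For (ii), the formula $d\mathcal{J}(\psi)=-\int_{X}\psi\,{\rm Im}(e^{-\sqrt{-1}\hat{\theta}}\chi_{\phi}^{n})$ is immediate from $dCY_{\mathbb{C}}$ and exhibits $d\mathcal{J}$ as the pairing of $\psi$ against the moment map computed earlier in this section, which is the defining property of the Kempf-Ness functional; taking $\gamma=\hat{\theta}$ then gives $\frac{d^{2}}{ds^{2}}\mathcal{J}=n!\,Q\ge 0$, so $\mathcal{J}$ is convex. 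For (iii), the hypercritical hypothesis $\hat{\theta}\in((n-1)\frac{\pi}{2},n\frac{\pi}{2})$ forces $\gamma-\hat{\theta}=n\frac{\pi}{2}-\hat{\theta}\in(0,\frac{\pi}{2})$, so both $\sin(\gamma-\hat{\theta})>0$ and $\cos(\gamma-\hat{\theta})>0$; hence ${\rm Re}(Z)$ and ${\rm Im}(Z)$ both have nonpositive second derivative and are concave.

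The hard part will be the trigonometric collapse in the second step: one must organize the sum-to-product manipulations so that the geodesic equation cancels exactly the sign-indefinite $2\Theta$-dependent cross terms, leaving the manifestly nonnegative quantity $\sum_{k}p_{k}(r/\rho_{k})\cos\theta_{k}$. The remaining ingredients---the integration by parts, the simultaneous diagonalization, and the identification of $d\mathcal{J}$ with the moment map---are routine. Finally, I would emphasize that the computation is formal in that it presumes a genuinely smooth geodesic; since~\eqref{eq: geoBdry} is only degenerate elliptic and its solutions are at best $C^{1,1}$ (Remark~\ref{rk: noReg}), turning these convexity and affineness statements into rigorous analytic tools requires the elliptic regularization~\eqref{eq: epsGeo}, which is a separate matter from the identity asserted here.
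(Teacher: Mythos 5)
Your proposal is correct, and it follows essentially the same route as the proof in \cite{CY18} that the survey cites: compute the second variation of $CY_{\mathbb{C}}$ along a geodesic, diagonalize $\omega$ and $\alpha_{\phi}$ pointwise, and use the geodesic equation to collapse the result to $-\sqrt{-1}\,e^{-\sqrt{-1}(\gamma-\hat{\theta})}$ times a nonnegative quantity (your $\sum_k p_k (r/\rho_k)\cos\theta_k = r\sum_k p_k/(1+\lambda_k^2)$ is exactly the $|\del\dot{\phi}|^2$-term, measured in the auxiliary metric with eigenvalues $1+\lambda_k^2$, appearing in \cite{CY18}). I verified your master identity directly, and the deductions of (i)--(iii) by choosing $\gamma = \hat{\theta}$ and $\gamma = n\frac{\pi}{2}$, together with your closing caveat that the computation is formal pending the $\epsilon$-geodesic regularization, all match the paper's treatment.
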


The key point is that the function $\mathcal{J}$ defined in Lemma~\ref{lem: func} (ii) is convex along geodesics, and has critical points at solutions of the dHYM equation.  Thus, we are reduced to determining whether the convex function $\mathcal{J}$ has a critical point in $\mathcal{H}$.  At least formally, this is determined by the behavior of the slope of $\mathcal{J}$ on $\del \mathcal{H}$; see Theorem~\ref{thm: CAT0} and the following discussion for an interpretation of $\del \mathcal{H}$.  More concretely, suppose that there is a solution $\phi_{dHYM} \in \mathcal{H}$ of the dHYM equation with hypercritical phase.  Let $\phi(s), s\in[0, \infty)$ be a geodesic ray in $\mathcal{H}$,  with $\phi(0)= \phi_{dHYM}$.  Since $\phi(0)$ is a critical point of $\mathcal{J}$ we have $\frac{d}{ds}|_{s=0}\mathcal{J}(\phi(s)) =0$.  On the other hand, since $\mathcal{J}(\phi(s))$ is convex we must have
\[
\lim_{s\rightarrow \infty} \frac{d}{ds} \mathcal{J}(\phi(s)) = \lim_{s\rightarrow \infty} \frac{\mathcal{J}(\phi(s))-\mathcal{J}(\phi(0))}{s} \geq 0
\]
Thus, if there is a geodesic ray with $\phi(s)$ with $\lim_{s\rightarrow \infty} \frac{d}{ds} \mathcal{J}(\phi(s)) <0$ then no solution of the dHYM equation exists.  Conversely, if the limit slope is positive {\em for all} geodesic rays, then one expects $\mathcal{J}$ to have a critical point in $\mathcal{H}$.  This is the essence of the Kempf-Ness theorem and the Hilbert-Mumford criterion in finite dimensional GIT \cite{GIT, ThNo}.

We now come to the first central issue in utilizing the GIT framework to study the dHYM equation; in order for the approach to work one needs geodesics $\phi(s)$ with enough regularity that the functional $\mathcal{J}(\phi(s))$ is {\em defined} and can be shown to be convex as a function of $s$.  This is a subtle matter.  As discussed above, the Harvey-Lawson viscosity theory (\'a la Rubstein-Solomon \cite{RuSol} and Jacob \cite{Jac}) produces only continuous geodesics, along which $\mathcal{J}$ is not obviously defined.  Furthermore, according to Remark~\ref{rk: noReg} the geodesic equation in $\mathcal{H}$ does not admit even $C^{2}$ solutions, while the calculation proving Lemma~\ref{lem: func} $(ii)$ assumes the existence of geodesics with at least $C^{3}$ regularity; see \cite{CY18}. Nevertheless we have the following result

 \begin{thm}[C.-Yau, \cite{CY18}, Chu-C.-Lee \cite{CCL}]\label{thm: geoThm}
 Suppose $[\alpha]$ has hypercritical phase in the sense of Definition~\ref{defn: liftedAngle} (ii).  Then, for any $\phi_0, \phi_1 \in \mathcal{H}$ there exists a unique $C^{1,1}$ solution of the geodesic equation~\eqref{eq: geoBdry}.  Furthermore, the functional $CY_{\mathbb{C}}$ is defined along this curve and conclusions of Lemma~\ref{lem: func} hold. 
 \end{thm}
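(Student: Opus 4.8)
The plan is to establish the existence and uniqueness of a $C^{1,1}$ geodesic via the elliptic regularization method, obtaining uniform $C^{1,1}$ estimates for the $\epsilon$-geodesic equation~\eqref{eq: epsGeo} that are independent of $\epsilon$, and then extracting a limit as $\epsilon \to 0$. The hypercritical phase hypothesis, $\hat{\theta} \in ((n-1)\frac{\pi}{2}, n\frac{\pi}{2})$, will be the crucial structural input: it guarantees the relevant operator has the concavity/convexity properties (cf. Lemma~\ref{lem: func}~(iii)) that make the a priori estimates tractable, and it should also ensure the almost-calibrated condition is preserved along the flow of solutions.

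\textbf{Existence and uniform estimates for the $\epsilon$-geodesic.} First I would show that for each fixed $\epsilon > 0$ the equation~\eqref{eq: epsGeo} is a genuinely (non-degenerate) elliptic equation on the compact manifold-with-boundary $\mathcal{X} = X \times \mathcal{A}$, for which one can solve the Dirichlet problem by the continuity method. The key is to run the continuity path between the reference solution and the desired boundary data, with openness following from the implicit function theorem (the linearization is elliptic and invertible with the given Dirichlet data) and closedness following from a priori estimates. The heart of the matter is the a priori estimates:
\begin{enumerate}
\item[(a)] A $C^0$ estimate, via the maximum principle together with barriers built from the boundary data $\phi_0, \phi_1$;
\item[(b)] A $C^1$ estimate, again by the maximum principle applied to the gradient, using that the hypercritical phase keeps $\Theta_{\omega}(\alpha_{\Phi_\epsilon})$ confined to the allowed branch;
\item[(c)] A uniform (in $\epsilon$) $C^{1,1}$, i.e. a bound on the complex Hessian $\DDb \Phi_\epsilon$, which is the decisive estimate.
\end{enumerate}
For step (c) one differentiates the equation twice, applies the maximum principle to a suitable quantity controlling $\sqrt{-1}\partial\dbar \Phi_\epsilon$, and exploits the concavity of the operator in the Hessian variable that the hypercritical phase provides. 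One must take care that all constants here are independent of $\epsilon$; the degeneration as $\epsilon \to 0$ only removes the $dt \wedge d\bar{t}$ term, so the $\epsilon$-independence should follow provided the estimates are structured to use only the ellipticity in the $X$-directions.

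\textbf{Passing to the limit and uniqueness.} With uniform $C^{1,1}$ bounds in hand, the family $\{\Phi_\epsilon\}$ is precompact in $C^{1,\gamma}$ for every $\gamma < 1$ and weakly-$*$ convergent in $C^{1,1}$. Taking $\epsilon \to 0$ produces a $C^{1,1}$ function $\Phi$ solving~\eqref{eq: geoBdry} in the weak (a.e.) sense, and the corresponding $\phi(x,s)$ solves the geodesic equation~\eqref{eq: geoEq}. Uniqueness should follow from the comparison principle for the degenerate elliptic equation~\eqref{eq: geoBdry}, or equivalently from the convexity of $\mathcal{J}$ and the non-positive curvature of $(\mathcal{H}, \langle \cdot, \cdot \rangle)$ established in Theorem~\ref{thm: geoThm}'s cited antecedents, since geodesics between two points in a CAT(0) space are unique. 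Finally, to conclude that $CY_{\mathbb{C}}$ is defined along the curve and that Lemma~\ref{lem: func} holds, I would verify that the integrands defining $\mathcal{C}$, $\mathcal{J}$, and $Z$ involve only $\alpha_{\Phi}$ (hence only $\DDb \Phi \in L^\infty$), so the functionals are well-defined for $C^{1,1}$ potentials; then approximate by the smooth $\epsilon$-geodesics and pass convexity/affineness through the limit, using that the relevant second-derivative-in-$s$ computations survive the weak-$*$ convergence.

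\textbf{The main obstacle} will be the uniform $C^{1,1}$ estimate in step (c): obtaining a second-order bound on $\DDb\Phi_\epsilon$ that is uniform as $\epsilon \to 0$ is exactly where the degeneracy of the limiting equation bites, and where Remark~\ref{rk: noReg} warns that no better than $C^{1,1}$ is possible. The hypercritical phase hypothesis is what makes even this borderline regularity attainable, so the argument must be carefully engineered to extract full mileage from that assumption—both in controlling the sign of the relevant curvature-type terms and in keeping the solution within the single almost-calibrated branch~\eqref{eq: liftAng} throughout the estimate.
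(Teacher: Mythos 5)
Your overall strategy is exactly the paper's: solve the elliptic regularization \eqref{eq: epsGeo} for each fixed $\epsilon>0$ by the continuity method, prove a priori estimates uniform in $\epsilon$, verify the conclusions of Lemma~\ref{lem: func} along the smooth $\epsilon$-geodesics, and pass to the limit. But there is one genuine gap, and it sits precisely at the step you call decisive. You identify the ``uniform $C^{1,1}$ estimate'' with a bound on the complex Hessian $\DDb\Phi_\epsilon$ and then assert weak-$*$ compactness in $C^{1,1}$. A uniform bound on $\DDb\Phi_\epsilon$ controls only the complex Hessian (hence the Laplacian); by elliptic $L^p$ theory it gives $\Phi_\epsilon$ bounded in $W^{2,p}$ for every $p<\infty$, and therefore limits in $C^{1,\gamma}$ for every $\gamma<1$ --- which is exactly what \cite{CY18} obtained --- but it does \emph{not} bound the full real Hessian $\nabla^2\Phi_\epsilon$, and so does not yield the $C^{1,1}$ conclusion of the theorem. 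The upgrade to genuine $C^{1,1}$, i.e. a uniform $L^\infty$ bound on the real Hessian, is the separate and substantially harder contribution of \cite{CCL}; it requires an estimate in the spirit of Chu--Tosatti--Weinkove and is not produced by the maximum-principle argument you sketch for a quantity controlling $\sqrt{-1}\del\dbar\Phi_\epsilon$ alone. Note also that the sharpness phenomenon of Lempert--Vivas \cite{LV13} concerns exactly this distinction: the obstruction to regularity beyond $C^{1,1}$ lives at the level of real second derivatives, so any argument that never sees $\nabla^2\Phi_\epsilon$ cannot reach the stated conclusion.

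Two smaller points. First, your fallback uniqueness argument via the $CAT(0)$ property is circular: the metric-space structure and Theorem~\ref{thm: CAT0} are established in \cite{CCL} \emph{using} these geodesics, so uniqueness must come from your first option, the comparison principle for the degenerate Dirichlet problem \eqref{eq: geoBdry} (equivalently, uniqueness of the $\epsilon$-geodesics plus stability of the limit). Second, in passing $CY_{\mathbb{C}}$ and the convexity statements to the limit, the paper does not merely push second-derivative computations through weak-$*$ convergence; it invokes results from pluripotential theory to interpret the limit as a pluripotential (or Harvey--Lawson/Rubinstein--Solomon viscosity) solution and to justify that the conclusions of Lemma~\ref{lem: func}, proved along the smooth $\epsilon$-geodesics, survive in the limit. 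Your instinct to approximate by $\epsilon$-geodesics is the right one, but the justification is more delicate than the proposal suggests.
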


Let us briefly explain the outline of the proof of Theorem~\ref{thm: geoThm}.  In \cite{CY18} the first author and Yau proved the existence of $C^{1,\alpha}$ geodesics (for any $\alpha \in [0,1)$) connecting points in $\mathcal{H}$ under the hypercritical phase assumption.  This is done by proving, for all $\epsilon >0$, the existence of a unique, smooth $\epsilon$-geodesic $\Phi_{\epsilon}$ solving~\eqref{eq: epsGeo}, with {\em uniform} estimates
\[
\|\Phi_{\epsilon}\|_{L^{\infty}(\mathcal{X})} + \|\nabla \Phi_{\epsilon}\|_{L^{\infty}(\mathcal{X}, \hat{\omega}_{1})} +  \|\DDb \Phi_{\epsilon}\|_{L^{\infty}(\mathcal{X}, \hat{\omega}_{1})}  \leq C
\]
where $\hat{\omega}_1$ is defined in~\eqref{eq: refMet}.  Furthermore, it is shown that the conclusions of Lemma~\ref{lem: func} hold for these $\epsilon$-geodesics.  Theorem~\ref{thm: geoThm} is obtained by passing to the limit as $\epsilon \rightarrow 0$, making use of some results in pluripotenial theory.   It was furthermore shown that the resulting $C^{1,\alpha}$ functions could be interpreted as solutions of~\eqref{eq: geoBdry} in the pluripotential sense, or as viscosity solutions of the associated Dirichlet problem in the viscosity sense of Rubinstein-Solomon \cite{RuSol}, and Harvey-Lawson \cite{HL}.  In \cite{CCL} the authors improved the regularity to full $C^{1,1}$ regularity, and obtained the following result

\begin{thm}[Chu-C.-Lee, \cite{CCL}]
Suppose $[\alpha] \in H^{1,1}(X,\mathbb{R})$ has hypercritical phase, and let $\phi_0, \phi_1 \in \mathcal{H}$, $\phi_0\ne \phi_1$.  The infinite dimensional Riemannian manifold $(\mathcal{H}, \langle \cdot, \cdot \rangle)$ has a well-defined metric structure, with distances given by
\[
d(\phi_0, \phi_1) = \lim_{\epsilon \rightarrow 0} \int_{0}^1 \sqrt{\langle \dot{\phi}_{\epsilon}(s), \dot{\phi}_{\epsilon}(s)\rangle} ds
\]
where $\phi_{\epsilon}(s)$ is the unique, smooth $\epsilon$-geodesic in $\mathcal{H}$ connecting $\phi_0, \phi_1$.
\end{thm}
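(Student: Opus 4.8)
The plan is to equip $\mathcal{H}$ with the path-length structure induced by $\langle\cdot,\cdot\rangle$, to identify the resulting distance with the limit of the lengths of the $\epsilon$-geodesics, and only then to verify non-degeneracy. Concretely, writing $\|\dot\gamma\|_{\gamma}:=\sqrt{\langle\dot\gamma,\dot\gamma\rangle_{\gamma}}$, I would set $L(\gamma)=\int_0^1\|\dot\gamma(s)\|_{\gamma(s)}\,ds$ for a piecewise-smooth path $\gamma:[0,1]\to\mathcal{H}$ and define $d(\phi_0,\phi_1)=\inf_\gamma L(\gamma)$ over all such paths joining $\phi_0$ to $\phi_1$. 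On $\mathcal{H}$ the metric is genuinely positive-definite (the almost-calibrated condition makes the density $\mathrm{Re}(e^{-\sqrt{-1}\hat\theta}(\omega+\sqrt{-1}\alpha_\phi)^n)$ strictly positive), so $L\ge 0$ is well-defined; symmetry of $d$ is immediate and the triangle inequality follows from concatenation of paths together with the reparametrization invariance of $L$. Thus $d$ is a priori a pseudometric, and the content of the theorem is (i) its evaluation by the stated limit, and (ii) the strict positivity $d(\phi_0,\phi_1)>0$ for $\phi_0\ne\phi_1$.

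For (i) I would exploit the uniform estimates underlying Theorem~\ref{thm: geoThm}: the $\epsilon$-geodesics $\phi_\epsilon$ joining $\phi_0,\phi_1$ are smooth, satisfy $\epsilon$-independent $C^{1,1}$ bounds, and converge in $C^{1,\alpha}$ to the $C^{1,1}$ geodesic $\phi$. Since $\dot\phi_\epsilon\to\dot\phi$ and the densities $\mathrm{Re}(e^{-\sqrt{-1}\hat\theta}(\omega+\sqrt{-1}\alpha_{\phi_\epsilon(s)})^n)$ converge uniformly, dominated convergence yields $\int_0^1\|\dot\phi_\epsilon\|_{\phi_\epsilon}\,ds\to L(\phi)$, so the limit in the statement exists and equals the true length of the limiting geodesic. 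As each $\phi_\epsilon$ is an admissible path this gives $d\le L(\phi)$ at once. The reverse inequality $d\ge L(\phi)$ is where the non-positive curvature of $(\mathcal{H},\langle\cdot,\cdot\rangle)$ proved above in \cite{CCL} enters: $\phi$ solves the geodesic equation~\eqref{eq: geoEq}, and a second-variation argument (carried out on the smooth $\epsilon$-regularizations, where the curvature computation is rigorous, and then stabilized under the limit, absorbing the $O(\epsilon^2)$ discrepancy between $\hat\omega_\epsilon$ and $\hat\omega_0$ from~\eqref{eq: refMet}) shows that $\phi$ minimizes length. Combining the inequalities gives $d(\phi_0,\phi_1)=L(\phi)=\lim_{\epsilon\to0}\int_0^1\|\dot\phi_\epsilon\|_{\phi_\epsilon}\,ds$.

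The main obstacle is (ii). A clean partial lower bound comes from the functional $\mathcal{C}=\mathrm{Re}(e^{-\sqrt{-1}\hat\theta}CY_{\mathbb{C}})$ of Lemma~\ref{lem: func}(i): writing $d\mu_\phi=\mathrm{Re}(e^{-\sqrt{-1}\hat\theta}(\omega+\sqrt{-1}\alpha_\phi)^n)$, its differential is $d\mathcal{C}(\psi)=\int_X\psi\,d\mu_\phi$, so Cauchy--Schwarz gives, for every path $\gamma$,
\[
\Big|\tfrac{d}{ds}\mathcal{C}(\gamma(s))\Big|=\Big|\int_X\dot\gamma\,d\mu_{\gamma}\Big|\le\|\dot\gamma\|_{\gamma}\Big(\int_X d\mu_{\gamma}\Big)^{1/2}=\sqrt{V}\,\|\dot\gamma\|_{\gamma},
\]
where $V=\int_X d\mu_\gamma=\mathrm{Re}\big(e^{-\sqrt{-1}\hat\theta}\hat{z}([\omega],[\alpha])\big)=|\hat{z}([\omega],[\alpha])|>0$ is a \emph{topological} constant, independent of the point of $\mathcal{H}$. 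Integrating and taking the infimum over $\gamma$ gives $d(\phi_0,\phi_1)\ge V^{-1/2}|\mathcal{C}(\phi_0)-\mathcal{C}(\phi_1)|$, which already separates points with distinct $\mathcal{C}$. To reach full non-degeneracy I would instead pass through the minimizing geodesic of part (i): the uniform $C^{1,1}$ bounds together with the hypercritical-phase hypothesis keep $\phi_\epsilon$ uniformly almost calibrated, so the densities $d\mu_{\phi_\epsilon(s)}$ are pinched between fixed multiples of $\omega^n$, uniformly in $\epsilon$ and $s$; this yields $\|\psi\|_{L^2(\omega^n)}\le C\|\psi\|_{\phi_\epsilon(s)}$ and hence
\[
\|\phi_0-\phi_1\|_{L^2(\omega^n)}\le\int_0^1\|\dot\phi_\epsilon\|_{L^2(\omega^n)}\,ds\le C\int_0^1\|\dot\phi_\epsilon\|_{\phi_\epsilon}\,ds.
\]
Letting $\epsilon\to0$ and invoking (i), the right-hand side tends to $C\,d(\phi_0,\phi_1)$, so $d(\phi_0,\phi_1)=0$ forces $\phi_0=\phi_1$, completing the proof that $d$ is a genuine distance.

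I expect the delicate point to be the \emph{uniform} two-sided control of the densities $d\mu_{\phi_\epsilon(s)}$ as $\epsilon\to0$ — equivalently, that the $\epsilon$-geodesics do not drift towards the boundary $\partial\mathcal{H}$ where the almost-calibrated positivity degenerates. This is precisely where the hypercritical-phase assumption is indispensable: it underlies the a priori estimates of \cite{CY18, CCL} and prevents the Lagrangian phase $\Theta_\omega(\alpha_{\phi_\epsilon})$ from leaving the interval $(\beta-\tfrac{\pi}{2},\beta+\tfrac{\pi}{2})$ of~\eqref{eq: liftAng} in the limit. A secondary subtlety, already flagged in Remark~\ref{rk: noReg}, is that the limiting geodesic is only $C^{1,1}$, so the convexity and minimization arguments in step (i) must be executed at the smooth $\epsilon$-level and transferred to $\phi$ by the limit, rather than applied to $\phi$ directly.
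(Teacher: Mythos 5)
The survey you are reading does not reproduce a proof of this theorem --- it is quoted from \cite{CCL}, whose argument adapts Chen's treatment of the Mabuchi metric --- so your proposal must be judged against that strategy. Your skeleton is the right one (length pseudometric, approximation by the smooth $\epsilon$-geodesics of~\eqref{eq: epsGeo}, and the Cauchy--Schwarz bound $d(\phi_0,\phi_1)\geq V^{-1/2}\,|\mathcal{C}(\phi_0)-\mathcal{C}(\phi_1)|$, which is correct, including the observation that $V={\rm Re}(e^{-\sqrt{-1}\hat{\theta}}\hat{z})=|\hat{z}|$ is topological). But the crux --- the inequality $d\geq \lim_{\epsilon\to 0}L(\phi_\epsilon)$ --- is not carried by what you propose. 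A second-variation argument yields at best \emph{local} minimality among nearby variations, and in this infinite-dimensional setting the formal non-positive curvature of $(\mathcal{H},\langle\cdot,\cdot\rangle)$ does not imply that geodesics globally minimize length; moreover the limiting geodesic is only $C^{1,1}$ (Remark~\ref{rk: noReg}), so the curvature computation cannot simply be ``stabilized under the limit.'' The actual mechanism, following Chen, is a \emph{first}-variation energy comparison: for an arbitrary smooth competitor path $\gamma(t)$ from $\phi_0$ to $\phi_1$, join $\phi_0$ to $\gamma(t)$ by the smooth $\epsilon$-geodesic $\psi_t$, differentiate the energy $E(t)=\int_0^1\|\partial_s\psi_t\|_{\psi_t}^2\,ds$ in $t$ --- the interior term is $O(\epsilon)$ precisely because $\psi_t$ solves the $\epsilon$-geodesic equation, leaving only a boundary term --- and conclude via Cauchy--Schwarz that $L(\psi_1)\leq L(\gamma)+C\epsilon$. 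Taking $\epsilon\to 0$ and the infimum over $\gamma$ gives the missing inequality with no curvature input whatsoever. This comparison lemma is the key idea absent from your proposal.

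Two subsidiary steps are also unjustified as written. First, your claim that $L(\phi_\epsilon)\to L(\phi)$ by dominated convergence because ``the densities converge uniformly'' fails: $\phi_\epsilon\to\phi$ only in $C^{1,\alpha}$, while the density ${\rm Re}(e^{-\sqrt{-1}\hat{\theta}}(\omega+\sqrt{-1}\alpha_{\phi_\epsilon(s)})^n)$ involves $\ddb\phi_\epsilon(s)$, which under the uniform $C^{1,1}$ bounds converges only weakly; one must pair the uniformly convergent $|\dot{\phi}_\epsilon|^2$ against weak-$*$ convergent Monge--Amp\`ere-type measures (this is the pluripotential-theoretic input mentioned after Theorem~\ref{thm: geoThm}), or bypass the issue entirely via the comparison lemma above. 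Second --- and you flagged this yourself, but flagging is not proving --- the uniform two-sided pinching $c\,\omega^n\leq {\rm Re}(e^{-\sqrt{-1}\hat{\theta}}(\omega+\sqrt{-1}\alpha_{\phi_\epsilon(s)})^n)\leq C\,\omega^n$ does not follow from the uniform $C^{1,1}$ estimates: those bound the eigenvalues of $\alpha_{\phi_\epsilon(s)}$ and hence give the upper bound, but the lower bound requires the slice phase $\Theta_\omega(\alpha_{\phi_\epsilon(s)})$ to stay a uniform distance from $\hat{\theta}\pm\frac{\pi}{2}$, uniformly in $\epsilon$ and $s$, which is a separate maximum-principle estimate on the $\epsilon$-geodesic where the hypercritical phase hypothesis genuinely enters (cf.\ \cite{CY18}); it is not a consequence of each slice merely lying in $\mathcal{H}$. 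Since your full non-degeneracy argument (beyond the partial $\mathcal{C}$-bound) routes through this pinching, the proposal has genuine gaps at both the minimization step and the non-degeneracy step.
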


Note that the geodesics joining points in $\mathcal{H}$ do not themselves lie in $\mathcal{H}$, since $\mathcal{H}$ consists of {\em smooth} potentials.  For this reason it is convenient to introduce the completion of the metric space $(\mathcal{H}, \langle \cdot, \cdot \rangle)$, which we denote by $(\widetilde{\mathcal{H}}, \tilde{d})$.  It turns out that most of the properties of $(\mathcal{H}, \langle \cdot, \cdot \rangle )$ carry over to this larger space.

\begin{thm}[Chu-C.-Lee, \cite{CCL}]\label{thm: CAT0}
The metric space $(\widetilde{\mathcal{H}}, \tilde{d})$ is a $CAT(0)$ length space.
\end{thm}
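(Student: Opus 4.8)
The plan is to adapt, to this setting, the method developed by Calabi--Chen and Darvas for the space of K\"ahler metrics, which promotes an infinite-dimensional non-positively curved Riemannian manifold to a $CAT(0)$ metric space. Rather than invoke an infinite-dimensional Cartan--Hadamard theorem (delicate here because the metric $\langle\cdot,\cdot\rangle$ may degenerate away from solutions), I would verify a four-point comparison inequality directly. For a geodesic metric space the $CAT(0)$ property is equivalent to the Bruhat--Tits $CN$ inequality: whenever $m$ is the midpoint of a minimizing geodesic from $x$ to $y$,
\[
\tilde d(z,m)^2 \leq \tfrac{1}{2}\tilde d(z,x)^2 + \tfrac{1}{2}\tilde d(z,y)^2 - \tfrac{1}{4}\tilde d(x,y)^2 .
\]
Since this is a closed condition in the four points, it suffices to establish it for $x,y,z$ ranging over the dense smooth locus $\mathcal{H}$ and then pass to the completion by continuity of $\tilde d$; the completion of a space satisfying $CN$ is automatically $CAT(0)$. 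Throughout I work under the hypercritical phase assumption, as in Theorem~\ref{thm: geoThm}.

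First I would upgrade $(\widetilde{\mathcal{H}},\tilde d)$ to a geodesic, hence length, space. Theorem~\ref{thm: geoThm} already provides, for any $\phi_0,\phi_1 \in \mathcal{H}$, a unique $C^{1,1}$ solution of~\eqref{eq: geoBdry} realizing the distance $\tilde d(\phi_0,\phi_1)$; for arbitrary endpoints in $\widetilde{\mathcal{H}}$ I would approximate by sequences in $\mathcal{H}$, take the corresponding $C^{1,1}$ geodesics, and extract a limiting minimizing geodesic using the uniform estimates and completeness of $\widetilde{\mathcal{H}}$. This makes midpoints available, so that the $CN$ inequality can even be formulated on $\widetilde{\mathcal{H}}$.

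The heart of the argument is the curvature input. By the Chu--C.--Lee theorem $(\mathcal{H},\langle\cdot,\cdot\rangle)$ has non-positive sectional curvature, and in a non-positively curved Riemannian manifold a second-variation (Jacobi field) argument yields the $CAT(0)$ comparison inequality for geodesic triangles with \emph{smooth} sides, and in particular the $CN$ inequality. The \textbf{main obstacle} is that the genuine minimizing geodesics are only $C^{1,1}$ and the metric may be degenerate, so the Jacobi field computation cannot be run directly along them. The remedy, exactly as in Theorem~\ref{thm: geoThm}, is to work with the smooth $\epsilon$-geodesics solving~\eqref{eq: epsGeo}, where the curvature computation is rigorous: I would prove the comparison/$CN$ estimate for $\epsilon$-geodesics with constants \emph{uniform} in $\epsilon$, then let $\epsilon \to 0$, using the uniform $C^{1,1}$ bounds, the pluripotential convergence of $\Phi_\epsilon$, and the distance formula $\tilde d = \lim_{\epsilon\to 0}\int_0^1 \sqrt{\langle \dot\phi_\epsilon,\dot\phi_\epsilon\rangle}\,ds$ to pass the inequality to the limit.

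The most demanding technical point is thus the uniformity as $\epsilon\to 0$: one must show both that the $\epsilon$-geodesic lengths converge to the true distances $\tilde d$ (so the limiting inequality is the genuine $CN$ inequality, and not an inequality for the auxiliary metric associated to $\hat\omega_\epsilon$) and that the non-positive-curvature estimate degrades in a controlled way as the regularizing term $\epsilon^{2}\sqrt{-1}\,dt\wedge d\bar t$ in~\eqref{eq: refMet} is removed. Once the $CN$ inequality holds throughout $\mathcal{H}$, continuity of $\tilde d$ extends it to $\widetilde{\mathcal{H}}$; combined with the geodesic structure established above, this shows $(\widetilde{\mathcal{H}},\tilde d)$ is a $CAT(0)$ length space.
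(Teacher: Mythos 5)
The paper itself gives no proof of this statement---it is quoted from \cite{CCL}---and your plan (geodesic structure on the completion via limits of the $C^{1,1}$ geodesics, the Bruhat--Tits $CN$ inequality established along smooth $\epsilon$-geodesics with constants uniform in $\epsilon$, then passage to $\widetilde{\mathcal{H}}$ by continuity) is essentially the Calabi--Chen-style argument that \cite{CCL} actually carries out. One minor inaccuracy that does not affect your strategy: on $\mathcal{H}$ itself the inner product $\langle\cdot,\cdot\rangle_{\phi}$ is positive-definite, since the weight ${\rm Re}\left(e^{-\sqrt{-1}\hat{\theta}}(\omega+\sqrt{-1}\alpha_{\phi})^{n}\right)$ is positive precisely on almost calibrated forms; the degeneracy you mention concerns the inner product on the space of connections in Section~\ref{sec: GIT}, not the metric on $\mathcal{H}$.
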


As a consequence of Theorem~\ref{thm: CAT0}, and the general properties of $CAT(0)$ length spaces, the space $(\widetilde{\mathcal{H}}, \tilde{d})$ has an intrinsically defined boundary.  The upshot of this result and Theorem~\ref{thm: geoThm} is that, at least when $[\alpha]$ has hypercritical phase, we can employ the powerful philosophy of finite dimensional GIT to study the dHYM equation.  In the next section we will explain how Theorem~\ref{thm: geoThm} can be used to find algebro-geometric obstructions to the existence of solutions to the dHYM equation.

 \section{Algebraic Obstructions, Chern Number Inequalities and Stability}\label{sec: Alg}
 
 Suppose $[\alpha]\in H^{1,1}(X,\mathbb{R})$ has hypercritical phase.  The follow construction is motivated by work of Ross-Thomas \cite{RT}; fix a filtration of coherent ideal sheaves
 \[
 \mathfrak{I}_0 \subset \mathfrak{I}_1 \subset \cdots \mathfrak{I}_{r-1} \subset \mathfrak{I}_{r} := \mathcal{O}_{X}
 \]
 Consider the flag ideal $\mathfrak{J} \subset \mathcal{O}_{X}\otimes \mathbb{C}[t]$ given by
 \begin{equation}\label{eq: flag}
 \mathfrak{J} = \mathfrak{I}_0 + \mathfrak{I}_1\cdot t + \cdots+\mathfrak{I}_{r-1} t^{r-1} + (t^r)
 \end{equation}
To this data we can associated a locally defined, $S^1$-invariant plurisubharmonic function $\psi(x,t) = \psi(x,|t|)$ on $X\times \Delta$ ($\Delta:= \{ t\in \mathbb{C} : |t|\leq 1\}$ in the following way; let $(f_{\ell, k})_{k=1}^{N_{\ell}}$ be local holomorphic functions generating $\mathfrak{I}_{\ell}$.  Then we set
\[
\psi(x,t) = \frac{1}{2\pi}\log\left(\sum_{\ell=0}^{r} |t|^{2\ell} \sum_{k=1}^{N_{\ell}} |f_{\ell,k}|^2\right).
\]
The function $\psi$ is smooth away from $\{t=0\}$ and singular on ${\rm supp}(\mathfrak{J})$.  Demailly-P\u{a}un \cite{DP} show that these local models can be glued together to obtain a globally defined $S^1$-invariant function, which we also denote by $\psi(x,t)$, satisfying
\[
\DDb \psi \geq -A(\pi^*\omega + \sqrt{-1}dt\wedge d\bar{t})
\]
on $X\times \Delta$.  From this estimate it follows that for each $\phi_0 \in \mathcal{H}$ there is a $\delta >0$ such that
\[
\Phi(x,t) = \phi_0(x) + \delta \psi(x,t) 
\]
defines a curve in $\mathcal{H}$ which approaches $\del \mathcal{H}$ as $t\rightarrow 0$.  While this curve is not a geodesic, we can use it as a target for geodesics, as illustrated in Figure~\ref{fig: scheme}

\begin{figure}[h]
\begin{center}
\begin{tikzpicture}[scale=1.3]
%\draw [help lines](0,0) grid (6,6);
\draw[color=gray, thick](0,0) rectangle (6,4);
\node [left] at (6,3) {$\mathcal{H}$};
\node [left] at (1,2) {$\phi_{0}$};
\draw[fill](1,2) circle [radius=0.05];
\draw [dashed, thick, color=gray] plot [smooth] coordinates {(1,2) (2,3.5) (3,.5) (5,2) (6,1.5)};
\draw[thick] (1,2)--(3.65,.8);
\draw[thick] (1,2)--(4.55, 1.68);
\end{tikzpicture}
\caption{The curve $\Phi(x,t)$ is denoted by the dashed gray line, while the black lines indicate geodesics joining $\phi_0$ to points on $\Phi(x,t)$ as $t\rightarrow 0$.}\label{fig: scheme}
\end{center}
\end{figure}
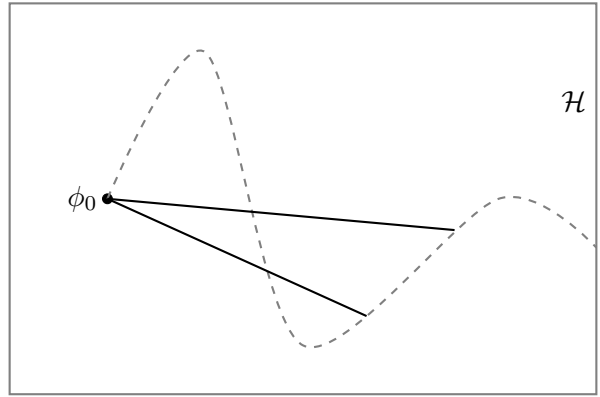

If $\phi_0$ is a solution of the dHYM equation, then by the convexity of $\mathcal{J}$ along geodesics we have
\[
0 \leq \frac{\mathcal{J}(\Phi(x,|t|))- \mathcal{J}(\phi_0) }{-\log|t|}
\]
for all $|t|>0$.  We have the following calculation
\begin{prop}[C.-Yau, \cite{CY18}]
Let $\mu : \widetilde{\mathcal{X}}\rightarrow X\times \Delta$ be a log-resolution of singularities of the ideal sheaf $\mathfrak{J}$ so that $\mu^{-1}\mathfrak{J}= \mathcal{O}_{\widetilde{\mathcal{X}}}(-E)$ for a simple normal crossings divisor $E$.  Then we have
\[
\lim_{t\rightarrow 0} \frac{\mathcal{J}(\Phi(x,|t|))- \mathcal{J}(\phi_0) }{-\log|t|} = \frac{\delta}{\pi} E. {\rm Im}\left(e^{-\sqrt{-1}\hat{\theta}}(\mu^{*}\omega + \sqrt{-1}(\mu^*\alpha -\delta E))^n\right)
\]
\end{prop}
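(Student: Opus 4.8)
The plan is to evaluate the limit directly, by differentiating $\mathcal{J}$ along the explicit curve $\Phi(x,e^{-s})$ and using the formula for $d\mathcal{J}$ from Lemma~\ref{lem: func}(ii). Writing $s=-\log|t|$ and $\dot{\Phi}=\partial_s\Phi=\delta\,\partial_s\psi(x,e^{-s})$, we have
\[
\frac{d}{ds}\mathcal{J}(\Phi(\cdot,e^{-s})) = -\int_{X}\dot\Phi\,{\rm Im}\left(e^{-\sqrt{-1}\hat\theta}(\omega+\sqrt{-1}\alpha_{\Phi})^{n}\right),
\]
so that, once this derivative is shown to converge as $s\to\infty$, a Cesàro argument identifies $\lim_{t\to0}\frac{\mathcal{J}(\Phi)-\mathcal{J}(\phi_0)}{-\log|t|}$ with $\lim_{s\to\infty}\frac{d}{ds}\mathcal{J}$. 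The problem is thereby reduced to two limits as $s\to\infty$: the behaviour of the velocity $\dot\Phi$ and the weak limit of the measures ${\rm Im}(e^{-\sqrt{-1}\hat\theta}(\omega+\sqrt{-1}\alpha_{\Phi})^{n})$.

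Next I would pass to the log resolution $\mu:\widetilde{\mathcal X}\to X\times\Delta$. By construction $\mu^{*}\psi=\frac{1}{2\pi}\log|s_{E}|^{2}_{h}+\rho$ with $\rho$ smooth and $s_E$ the defining section of $E$, so Poincaré--Lelong gives $\DDb\,\mu^{*}\psi=[E]-\theta_{E}+\DDb\rho$, where $\theta_{E}$ is a smooth representative of $c_{1}(\mathcal O_{\widetilde{\mathcal X}}(E))$. Hence the complexified form pulls back to $\mu^{*}\pi_{X}^{*}\omega+\sqrt{-1}(\mu^{*}\pi_{X}^{*}\alpha-\delta\theta_{E})+\sqrt{-1}\delta[E]$ modulo $\DDb$-exact smooth terms; identifying $\theta_E$ with $E$ accounts for the factor $\mu^{*}\alpha-\delta E$ appearing inside the $n$-th power in the statement, while the $\DDb$-exact pieces drop out of the cohomological intersection with $E$. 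Away from $\{t=0\}$ the level structure of the flag ideal forces $\psi(\cdot,t)\sim\frac{\ell(x)}{\pi}\log|t|$, whence $\dot\Phi\to-\frac{\delta}{\pi}\ell(x)$, the integer weights $\ell(x)$ being precisely the multiplicities recorded in $E=\sum_i a_i E_i$; this is the source of the prefactor $\frac{\delta}{\pi}$.

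Combining these, I would argue that
\[
\lim_{s\to\infty}\frac{d}{ds}\mathcal{J}=\frac{\delta}{\pi}\,{\rm Im}\left(e^{-\sqrt{-1}\hat\theta}\int_{X}\ell(x)(\omega+\sqrt{-1}\alpha_{\infty})^{n}\right)=\frac{\delta}{\pi}\,E.\,{\rm Im}\left(e^{-\sqrt{-1}\hat\theta}(\mu^{*}\omega+\sqrt{-1}(\mu^{*}\alpha-\delta E))^{n}\right),
\]
the second equality being the statement that integrating the limiting $(n,n)$-form against the weight $\ell(x)$ is exactly its intersection with the divisor $E$ on $\widetilde{\mathcal X}$.

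\textbf{Main obstacle.} The analytic crux is justifying the two limits of the first paragraph: the weak convergence of the complex Monge--Ampère measures $(\omega+\sqrt{-1}\alpha_{\Phi(\cdot,e^{-s})})^{n}$ and the convergence of $\int_X\dot\Phi\,(\cdots)$ when $\dot\Phi$ has the logarithmic (hence, in the limit, bounded but discontinuous) profile coming from $\psi$. This demands control of the measures near the singular support of $\mathfrak J$, i.e. near $E$, so that the limit localizes with the correct multiplicities; the Bedford--Taylor theory together with the explicit structure of $\psi$ supplied by Demailly--P\u{a}un \cite{DP} is what makes this rigorous. By contrast the algebraic bookkeeping (Poincaré--Lelong, the passage to $\widetilde{\mathcal X}$, and the identification of $\ell(x)$ with the coefficients of $E$) is routine. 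A secondary technical point is that a naive expansion of the pulled-back top power would involve the ill-defined self-intersection $[E]\wedge[E]$; this is sidestepped by working with the $s$-derivative of $\mathcal J$, a genuine measure-against-function pairing, rather than expanding $(\cdots)^{n+1}$ on the total space, or alternatively by regularizing through the smooth $\epsilon$-geodesic approximations of~\eqref{eq: epsGeo}.
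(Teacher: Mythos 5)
You have the right skeleton, and for what it is worth the survey itself states this proposition without proof, deferring to \cite{CY18}, where the computation is carried out in essentially the Ross--Thomas style you describe: differentiate along the ray, pass to the log-resolution, apply Poincar\'e--Lelong, and localize on $E$. Your local bookkeeping is also correct: $\psi(x,t)=\frac{\ell(x)}{\pi}\log|t|+O(1)$ with $\ell(x)$ the least $\ell$ for which $\mathfrak{I}_\ell$ does not vanish at $x$, and $\mu^*\psi=\frac{1}{2\pi}\log|s_E|^2_h+\rho$ with $\rho$ smooth.

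The genuine gap is your displayed ``combining'' identity: pairing the pointwise limit of $\dot\Phi$ against the weak limit of the measures ${\rm Im}\bigl(e^{-\sqrt{-1}\hat{\theta}}(\omega+\sqrt{-1}\alpha_{\Phi})^{n}\bigr)$ is not a correct formula awaiting Bedford--Taylor justification; it is false as a computational principle, because the function and the measures concentrate jointly on the same set and the limit of the pairing is not the pairing of the limits. The failure is quantitative and hits exactly the self-intersection strata that make the proposition nontrivial. Test the simplest model, $n=1$ and $\mathfrak{J}=(z,t)$, so $\psi_s(z)=\frac{1}{2\pi}\log(|z|^2+e^{-2s})$: here $\dot\psi_s\to 0$ away from the origin, $\dot\psi_s(0)=-\frac{1}{\pi}$, and $\ddb\psi_s$ converges weakly to the unit Dirac mass at $z=0$, so your prescription predicts $\int\dot\psi_s\,\ddb\psi_s\to-\frac{1}{\pi}$. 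But integration by parts gives the exact identity
\[
\frac{d}{ds}\int\psi_s\,(\ddb\psi_s)^k\wedge\gamma=(k+1)\int\dot\psi_s\,(\ddb\psi_s)^k\wedge\gamma
\]
for any smooth closed $\gamma$, and since $\int\psi_s\,\ddb\psi_s=-\frac{s}{\pi}+O(1)$ one finds $\int\dot\psi_s\,\ddb\psi_s\to-\frac{1}{2\pi}$, half the naive value. In general each mixed term $\int_X\dot\Phi\,(\ddb\psi_s)^k\wedge(\cdots)$ --- that is, each $E^{k+1}$-stratum of the final intersection number --- enters the true limit with weight $\frac{1}{k+1}$ relative to your ``$\ell(x)$ against the limiting mass'' count; the naive exchange of limits is only correct for the $k=0$ term $E\cdot\mu^*(\cdots)^n$. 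So your middle display cannot serve as the derivation, and in particular it does not reliably determine the overall constant. The actual content of the proposition is the resummation of these reweighted mixed slopes into the single clean intersection number, and this is precisely what the structure of the primitive, $CY_{\mathbb{C}}(\phi)=\frac{1}{n+1}\sum_{j=0}^{n}\int_X\phi\,(\omega+\sqrt{-1}\alpha_\phi)^j\wedge(\omega+\sqrt{-1}\alpha)^{n-j}$, or equivalently the rewriting of increments of $\mathcal{J}$ as integrals of the $(n+1)$-form over $X\times\{e^{-s}\leq|t|\leq 1\}$ followed by Poincar\'e--Lelong on $\widetilde{\mathcal{X}}$, is designed to accomplish --- the route you relegate to a parenthetical alternative at the end is the proof, not a fallback. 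A smaller looseness of the same kind: $\ell(x)$ is a function on $X$, while the multiplicities $a_i$ in $E=\sum_i a_iE_i$ live on $\widetilde{\mathcal{X}}$ over centers in $X\times\{0\}$; they are related through generic Lelong numbers along the centers rather than being equal, so identifying them requires an argument.
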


As a consequence, we obtain the following corollary, which yields algebraic obstructions to the existence of solutions of the deformed Hermitian-Yang-Mills equation.

\begin{cor}
Suppose $[\alpha] \in H^{1,1}(X,\mathbb{R})$ has hypercritical phase, and admits a solution of the dHYM equation.  Then for every flag ideal $\mathfrak{J}$ as in~\eqref{eq: flag}, and any log-resolution  $\mu : \widetilde{\mathcal{X}}\rightarrow X\times \Delta$ of $\mathfrak{J}$  so that $\mu^{-1}\mathfrak{J}= \mathcal{O}_{\widetilde{\mathcal{X}}}(-E)$ for a simple normal crossings divisor $E$ we must have
\begin{equation}\label{eq: stabIn}
\frac{\delta}{\pi} E. {\rm Im}\left(e^{-\sqrt{-1}\hat{\theta}}(\mu^{*}\omega + \sqrt{-1}(\mu^*\alpha -\delta E))^n\right) \geq 0
\end{equation}
for all $\delta>0$ sufficiently small.
\end{cor}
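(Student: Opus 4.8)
The plan is to obtain \eqref{eq: stabIn} as the sign of a limiting slope of the Kempf-Ness functional $\mathcal{J}$ along the family of potentials $\Phi(x,|t|) = \phi_0 + \delta\psi(x,|t|)$ built from the flag ideal $\mathfrak{J}$, exploiting that a solution of the dHYM equation is a minimizer of $\mathcal{J}$. The two ingredients I would use are the convexity of $\mathcal{J}$ along geodesics (Lemma~\ref{lem: func}(ii) together with the $C^{1,1}$ geodesic existence of Theorem~\ref{thm: geoThm}) and the explicit evaluation of the limiting slope in the preceding Proposition. Given these, the Corollary is a short, formal deduction.

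First I would record that the hypothesis that $[\alpha]$ admits a dHYM solution means there is some $\phi_0 \in \mathcal{H}$ with ${\rm Im}(e^{-\sqrt{-1}\hat{\theta}}(\omega + \sqrt{-1}\alpha_{\phi_0})^n) = 0$. By the formula for $d\mathcal{J}$ in Lemma~\ref{lem: func}(ii), this says exactly that $\phi_0$ is a critical point of $\mathcal{J}$. Next, fix $\delta > 0$ small enough that $\Phi(x,|t|) = \phi_0 + \delta\psi(x,|t|)$ lies in $\mathcal{H}$ for every $t \in (0,1]$, as guaranteed by the Demailly--P\u{a}un estimate on $\psi$. For each such $t$, Theorem~\ref{thm: geoThm} produces the unique $C^{1,1}$ geodesic in $\mathcal{H}$ joining $\phi_0$ to $\Phi(x,|t|)$, and asserts that $CY_{\mathbb{C}}$, hence $\mathcal{J}$, is defined and convex along it. Parametrizing this geodesic on $[0,1]$ with initial point $\phi_0$, the restriction of $\mathcal{J}$ is a convex function of the parameter whose derivative at the endpoint $\phi_0$ vanishes, since $\phi_0$ is critical; it is therefore non-decreasing, so that $\mathcal{J}(\Phi(x,|t|)) \geq \mathcal{J}(\phi_0)$. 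Because $-\log|t| > 0$ for $|t| < 1$, dividing preserves the inequality and gives
\[
0 \leq \frac{\mathcal{J}(\Phi(x,|t|)) - \mathcal{J}(\phi_0)}{-\log|t|}
\]
for all $t \in (0,1)$.

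I would then let $t \to 0$. The left-hand side is non-negative for every $t$, so its limit is non-negative; by the preceding Proposition this limit equals
\[
\frac{\delta}{\pi} E.\,{\rm Im}\left(e^{-\sqrt{-1}\hat{\theta}}(\mu^{*}\omega + \sqrt{-1}(\mu^*\alpha - \delta E))^n\right),
\]
which is therefore $\geq 0$. Since $\delta > 0$ was an arbitrary sufficiently small parameter, the inequality \eqref{eq: stabIn} holds for all small $\delta$, as claimed.

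The genuinely delicate points are not in this final deduction but in the two inputs it rests on, and I would want to check carefully how they interact. The convexity of $\mathcal{J}$ in Lemma~\ref{lem: func}(ii) is derived formally assuming $C^3$ geodesics, whereas by Remark~\ref{rk: noReg} one can only hope for $C^{1,1}$ regularity; the resolution is to carry out the convexity computation along the smooth $\epsilon$-geodesics solving \eqref{eq: epsGeo} and pass to the limit $\epsilon \to 0$, which is precisely the content packaged into Theorem~\ref{thm: geoThm}, so I would simply invoke it rather than reprove it. The other point requiring care is the exchange of the limit $t \to 0$ with the intersection-theoretic evaluation: this is the substance of the preceding Proposition and relies on resolving the singularities of $\mathfrak{J}$ and controlling $\mathcal{J}$ near $\partial\mathcal{H}$. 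I would therefore regard that Proposition, together with Theorem~\ref{thm: geoThm}, as the true analytic heart of the matter, and treat the Corollary itself as the formal GIT payoff: criticality plus convexity forces the limiting slope toward the boundary to be non-negative.
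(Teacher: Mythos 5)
Your proposal is correct and follows essentially the same route as the paper: criticality of the dHYM solution $\phi_0$ for $\mathcal{J}$, convexity of $\mathcal{J}$ along the $C^{1,1}$ geodesics of Theorem~\ref{thm: geoThm} joining $\phi_0$ to the Demailly--P\u{a}un curve $\Phi(x,|t|)$ (giving the non-negative difference quotient), and the preceding Proposition to identify the limiting slope as $t\rightarrow 0$ with the intersection number in~\eqref{eq: stabIn}. Your closing remarks correctly locate the analytic content in those two inputs rather than in the formal deduction itself.
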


The inequality in~\eqref{eq: stabIn} can be improved to a strict inequality via a perturbation argument, provided the flag ideal is not of the form $\mathfrak{J} = (t^r)$ for some $r\geq 0$ \cite{CY18}. In fact one gets even more information by applying the same reasoning to the functionals $Z, \mathcal{C}$ introduced in Lemma~\ref{lem: func}.  Let us introduce the following quantities;
for any flag ideal $\mathfrak{J}$ and log-resolution $\mu: \tilde{X}\rightarrow X\times \Delta$ with $\mu^{-1}\mathfrak{J} = \mathcal{O}_{\widetilde{\mathcal{X}}}(-E)$ as above, let
\[
Z_{\delta E} = -\delta E. e^{-\sqrt{-1}\omega + (\alpha-\delta E)}, \qquad Z_{X}([\alpha]) = -[X]. e^{-\sqrt{-1}\omega +\alpha}.
\]
Note that if $\alpha= c_1(L)$ for some holomorphic line bundle then
\[
Z_{\delta E} = -\delta E. e^{-\sqrt{-1}\omega}ch(L-\delta E), \qquad Z_{X} ([\alpha])= -[X]. e^{-\sqrt{-1}\omega}ch(L).
\]
The algebraic obstructions of \cite{CY18} can be summarized as
\begin{prop}[C.-Yau, \cite{CY18}]\label{prop: algObst}
Suppose $[\alpha] \in H^{1,1}(X,\mathbb{R})$ has hypercritical phase (in particular $\mathcal{H} \ne \emptyset$).  Then
\begin{enumerate}
\item[(i)] For every flag ideal $\mathcal{J}$ and every log-resolution as above, we have
\[
{\rm Re}\left(\frac{Z_{\delta E}}{Z_{X}([\alpha])}\right) \geq 0
\]
and
\[
{\rm Im}\left(Z_{\delta E}\right) >0
\]
for all $\delta >0$ sufficiently small.
\item[(ii)] If in addition $[\alpha]$ admits a solution of the dHYM equation then
\[
{\rm Im}\left(\frac{Z_{\delta E}}{Z_{X}([\alpha])}\right) \geq 0
\]
for all $\delta >0$ sufficiently small.
\end{enumerate}
Furthermore, if equality holds in either $(i)$ or $(ii)$ then we must have $\mathfrak{J} = (t^{r})$ for some $r\geq 0$.
\end{prop}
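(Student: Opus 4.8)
The plan is to read all three inequalities as sign conditions on the asymptotic slopes, along the degenerating test curve $\Phi(x,t)=\phi_0+\delta\psi(x,t)$ built from the flag ideal $\mathfrak{J}$, of the three functionals singled out in Lemma~\ref{lem: func}: the Kempf--Ness functional $\mathcal{J}$, the affine functional $\mathcal{C}$, and the central charge $Z$. First I would record that each of these is a fixed real-linear combination of $CY_{\mathbb{C}}$, so that the single limit $W:=\lim_{t\to 0}\frac{CY_{\mathbb{C}}(\Phi)-CY_{\mathbb{C}}(\phi_0)}{-\log|t|}$, whose imaginary part was computed in the preceding Proposition, simultaneously computes the slopes of $\mathcal{C}$, ${\rm Re}(Z)$ and ${\rm Im}(Z)$ by the identical log-resolution argument. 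Unwinding the definitions of $Z_{\delta E}=-\delta E.e^{-\sqrt{-1}\omega+(\alpha-\delta E)}$ and $Z_{X}([\alpha])=-[X].e^{-\sqrt{-1}\omega+\alpha}$ — and using $[X].e^{-\sqrt{-1}\omega+\alpha}=\frac{(-\sqrt{-1})^{n}}{n!}\hat z([\omega],[\alpha])$ to pin down $Z_X$, which then sits in the second quadrant — these slopes become strictly positive multiples of ${\rm Im}(Z_{\delta E})$, of ${\rm Re}(Z_{\delta E}/Z_{X})$ and of ${\rm Im}(Z_{\delta E}/Z_{X})$ respectively. This bookkeeping is routine but it is where every sign and constant is fixed; in particular the slope of $\mathcal{J}$ is a positive multiple of ${\rm Im}(Z_{\delta E}/Z_{X})$.

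Part (ii) is then immediate from the convexity half of Lemma~\ref{lem: func}(ii). If $\phi_0$ solves the dHYM equation then $d\mathcal{J}$ vanishes there, so $\phi_0$ is a critical point of the geodesically convex $\mathcal{J}$; since Theorem~\ref{thm: geoThm} supplies, under the hypercritical phase hypothesis, $C^{1,1}$ geodesics along which $\mathcal{J}$ is defined and convex, and Theorem~\ref{thm: CAT0} makes $\widetilde{\mathcal{H}}$ a $CAT(0)$ space, restricting $\mathcal{J}$ to the geodesic from $\phi_0$ to $\Phi(\cdot,|t|)$ shows $\phi_0$ is a global minimum. Hence $\mathcal{J}(\Phi)\ge \mathcal{J}(\phi_0)$ along the test curve, so its normalized limit is $\ge 0$, which is exactly ${\rm Im}(Z_{\delta E}/Z_{X})\ge 0$. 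This is the content of the Corollary above, reorganized.

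For part (i), which requires only hypercritical phase, I would use the affineness of $\mathcal{C}$ and the concavity of ${\rm Re}(Z),{\rm Im}(Z)$ from Lemma~\ref{lem: func}(i),(iii). The inequality ${\rm Re}(Z_{\delta E}/Z_{X})\ge 0$ translates into nonpositivity of the slope of $\mathcal{C}$: here $d\mathcal{C}(\psi)=\int_X \psi\,{\rm Re}(e^{-\sqrt{-1}\hat\theta}(\omega+\sqrt{-1}\alpha_\phi)^n)$ has strictly positive density because $\phi_0\in\mathcal{H}$ is almost calibrated, while the test curve degenerates monotonically, $\dot\psi\le 0$ as $t\to 0$, so the slope has the right sign. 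The strict inequality ${\rm Im}(Z_{\delta E})>0$ I would instead obtain by a direct leading-order expansion in $\delta$: to first order $Z_{\delta E}\sim -\frac{\delta}{n!}e^{-\sqrt{-1}n\pi/2}\int_E(\omega+\sqrt{-1}\alpha)^n$, and hypercritical phase forces the Lagrangian phase into a range making this imaginary part positive, with effectivity of $E$ supplying the sign. The delicate point, which I expect to be the main obstacle here, is that $\mu^*\omega$ can degenerate on the contracted locus of $E$, so the leading term may vanish and one must track the $-\delta E$ corrections — this is exactly where positivity of the exceptional divisor enters.

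Finally, for the rigidity clause I would compute the extremal case directly. When $\mathfrak{J}=(t^r)$ the resolution is trivial and $E=r[X\times\{0\}]$ is a multiple of the central fiber, which has vanishing self-intersection and on which the pulled-back forms reduce to those on $X$; a short computation gives $Z_{\delta E}=\delta r\,Z_{X}([\alpha])$, so that ${\rm Im}(Z_{\delta E}/Z_{X})=0$ and ${\rm Re}(Z_{\delta E}/Z_{X})=\delta r$, saturating the relevant inequalities precisely when $r=0$. The converse — that equality forces $\mathfrak{J}=(t^r)$ — is the genuine rigidity statement, and I expect it to be the hardest part: it amounts to proving the slope is \emph{strictly} positive whenever $E$ is not supported on the central fiber, which is the strict-positivity refinement flagged after the Corollary. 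I would extract it either from strict concavity/convexity of the functionals along geodesics transverse to the degeneration, or, more robustly, by perturbing the flag ideal so as to strictly decrease the slope, contradicting equality.
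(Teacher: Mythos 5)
Your overall architecture coincides with the paper's: a single slope limit for $CY_{\mathbb{C}}$ along the test curve feeds all three functionals of Lemma~\ref{lem: func}; part (ii) is convexity of $\mathcal{J}$ along the $C^{1,1}$ geodesics of Theorem~\ref{thm: geoThm} plus criticality at the dHYM solution (the $CAT(0)$ input is not needed); ${\rm Re}\bigl(Z_{\delta E}/Z_{X}\bigr)\geq 0$ is the monotonicity of $\mathcal{C}$, whose first variation has strictly positive density on $\mathcal{H}$ while $\dot{\Phi}=\delta\dot{\psi}\leq 0$; and the rigidity clause is the perturbation argument the paper flags after the Corollary. The genuine gap is your treatment of ${\rm Im}(Z_{\delta E})>0$. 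The proposed ``first order in $\delta$'' expansion, with leading term proportional to $\delta\, E.(\mu^{*}\omega+\sqrt{-1}\mu^{*}\alpha)^{n}$, is identically zero for every component of $E$ other than multiples of the central fiber: such a component maps under $\mu$ to $V\times\{0\}$ with $\dim_{\mathbb{C}}V=p<n$, so its pushforward vanishes as an $n$-cycle and it pairs to zero with any form pulled back from $X$. The true leading order is $\delta^{n-p}$ — exactly the computation of \cite[Lemma 7.5]{CY18} quoted immediately after the Proposition — and its sign is governed by signed mixed intersection numbers involving powers of $-\delta E$; ``effectivity of $E$'' does not by itself supply the sign, and the ``delicate point'' you flag is precisely this unresolved issue, not a technicality. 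The intended argument avoids the expansion entirely and is the same monotonicity argument you already use for $\mathcal{C}$, now applied to ${\rm Im}(Z)$ from Lemma~\ref{lem: func} (iii): under the hypercritical phase hypothesis every $\phi\in\mathcal{H}$ has $\Theta_{\omega}(\alpha_{\phi})\in\bigl((n-2)\frac{\pi}{2}, n\frac{\pi}{2}\bigr)$, so the density of $d\,{\rm Im}(Z)$, which pointwise equals $r\sin\bigl(\Theta_{\omega}(\alpha_{\phi})-n\frac{\pi}{2}\bigr)$ with $r>0$ and $\Theta_{\omega}(\alpha_{\phi})-n\frac{\pi}{2}\in(-\pi,0)$, is strictly negative; paired against $\dot{\Phi}\leq 0$ this makes ${\rm Im}(Z)$ nondecreasing along the degeneration, and its limit slope — a positive multiple of ${\rm Im}(Z_{\delta E})$, since the slope of $Z=e^{-\sqrt{-1}n\pi/2}CY_{\mathbb{C}}$ is $\frac{n!}{\pi}Z_{\delta E}$ — is $\geq 0$, with strictness absorbed into the same perturbation step as the rigidity clause.

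Two smaller defects. First, your ``respectively'' dictionary in the opening paragraph is garbled: the slope of $\mathcal{C}$ corresponds (up to a negative constant) to ${\rm Re}\bigl(Z_{\delta E}/Z_{X}\bigr)$, the slope of ${\rm Im}(Z)$ to ${\rm Im}(Z_{\delta E})$ and of ${\rm Re}(Z)$ to ${\rm Re}(Z_{\delta E})$, and it is $\mathcal{J}$ whose slope is a positive multiple of ${\rm Im}\bigl(Z_{\delta E}/Z_{X}\bigr)$; your later arguments quietly use the correct correspondences, but as written the bookkeeping contradicts itself. Second, in the extremal computation for $\mathfrak{J}=(t^{r})$, the identity $Z_{\delta E}=\delta r\,Z_{X}$ gives ${\rm Im}\bigl(Z_{\delta E}/Z_{X}\bigr)=0$ for \emph{every} $r\geq 0$, not ``precisely when $r=0$'' — that is the whole point of the rigidity clause for (ii) — while for $r\geq 1$ one has ${\rm Im}(Z_{\delta E})=\delta r\,{\rm Im}(Z_{X})>0$ since $\arg Z_{X}\in(\frac{\pi}{2},\pi)$, so only the inequalities of (i) distinguish $r=0$. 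Both of these are repairable; the expansion argument for ${\rm Im}(Z_{\delta E})>0$ is the one substantive flaw and should be replaced by the monotonicity argument above.
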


Said another way, if either of the inequalities in Proposition~\ref{prop: algObst} (i) fail, then $[\alpha]$ cannot have hypercritical phase, and if the inequality in (ii)  fails, then $[\alpha]$ cannot admit a solution of the dHYM equation.  To get a better feel for these obstructions it is useful to consider the simplest possible flag ideal; namely, when $r=1$ and $\mathfrak{I}_0 = I_{V}$ is the ideal sheaf of a proper irreducible subvariety $V\subset X$, with $\dim_{\mathbb{C}} V=p$.  In this case a standard calculation in intersection theory \cite[Lemma 7.5]{CY18} shows that
\[
Z_{\delta E} = -\delta^{n-p} \binom{n}{n-p}e^{\sqrt{-1}n\frac{\pi}{2}}\int_{V}e^{-\sqrt{-1}(\omega+\sqrt{-1}\alpha)} + O(\delta^{n+1-p})
\]
The leading order terms in $\delta$ as $\delta \rightarrow 0$ is proportional to
\[
Z_{V}([\alpha]) := -\int_{V}e^{-\sqrt{-1}(\omega+\sqrt{-1}\alpha)},
\]
which yields

\begin{cor}\label{cor: obstructions}
Suppose $[\alpha] \in H^{1,1}(X,\mathbb{R})$ has hypercritical phase (in particular $\mathcal{H} \ne \emptyset$), and $V\subset X$ is a proper, irreducible subvariety of dimension $p <n$ .  Then
\begin{enumerate}
\item[(i)]
\[
{\rm Re}\left(\frac{Z_{V}([\alpha])}{Z_{X}([\alpha])}\right)>0 
\]
and
\[
{\rm Im}\left(Z_{V}([\alpha])\right) >0.
\]
\item[(ii)] If in addition $[\alpha]$ admits a solution of the dHYM equation then
\[
{\rm Im}\left(\frac{Z_{V}([\alpha])}{Z_{X}([\alpha])}\right) > 0
\]
for all $\delta >0$ sufficiently small.
\end{enumerate}
\end{cor}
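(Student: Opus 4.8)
The plan is to obtain Corollary~\ref{cor: obstructions} as the simplest special case of Proposition~\ref{prop: algObst}, by feeding in the most elementary flag ideal and extracting the leading-order behaviour of the resulting obstruction as the scaling parameter $\delta$ tends to zero. Concretely, given a proper irreducible subvariety $V \subset X$ of dimension $p < n$, I would take $r = 1$ and $\mathfrak{I}_0 = I_V$ in the construction~\eqref{eq: flag}, so that the flag ideal is $\mathfrak{J} = I_V + (t) \subset \mathcal{O}_X \otimes \mathbb{C}[t]$; geometrically this degeneration amounts to blowing up the scheme $V \times \{0\} \subset X \times \Delta$. Since $[\alpha]$ has hypercritical phase, Proposition~\ref{prop: algObst} applies verbatim to this $\mathfrak{J}$ and to any log-resolution $\mu \colon \widetilde{\mathcal{X}} \to X \times \Delta$ with $\mu^{-1}\mathfrak{J} = \mathcal{O}_{\widetilde{\mathcal{X}}}(-E)$, yielding the inequalities for $Z_{\delta E}$ and $Z_X([\alpha])$ for all sufficiently small $\delta > 0$.

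The heart of the matter is then the intersection-theoretic expansion of $Z_{\delta E}$ in powers of $\delta$. Here I would invoke the computation of \cite[Lemma 7.5]{CY18}: because $V$ has codimension $n - p$, the class $E$ meets $e^{-\sqrt{-1}\omega + (\alpha - \delta E)}$ to leading order in the $n - p$ normal directions, producing
\[
Z_{\delta E} = \delta^{n-p}\binom{n}{n-p}e^{\sqrt{-1}n\frac{\pi}{2}} Z_V([\alpha]) + O(\delta^{n+1-p}).
\]
With this in hand, I would divide each inequality of Proposition~\ref{prop: algObst} by the strictly positive factor $\delta^{n-p}\binom{n}{n-p}$ and let $\delta \to 0$; since $Z_X([\alpha]) \ne 0$ (as $\mathcal{H} \ne \emptyset$), the error terms are $O(\delta)$ and drop out, so the inequalities for $Z_{\delta E}$ pass to the corresponding inequalities for $Z_V([\alpha])$. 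The one genuine piece of bookkeeping is the universal unimodular constant $e^{\sqrt{-1}n\pi/2}$: this is exactly the phase built into the functional $Z$ of Lemma~\ref{lem: func}(iii) and into the normalization $Z_X([\alpha]) = -\tfrac{1}{n!}e^{-\sqrt{-1}n\pi/2}\hat z([\omega],[\alpha])$, and I would check that it matches up so that $\mathrm{Re}$, $\mathrm{Im}$, and the ratios against $Z_X([\alpha])$ land on precisely the quantities appearing in the Corollary.

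Finally, to promote the limiting weak inequalities to the strict inequalities stated in the Corollary, I would use the last sentence of Proposition~\ref{prop: algObst}: equality in $(i)$ or $(ii)$ forces $\mathfrak{J} = (t^r)$. Since $V$ is a nonempty proper subvariety we have $I_V \ne \mathcal{O}_X$ and $I_V \ne 0$, so $\mathfrak{J} = I_V + (t)$ is never of the form $(t^r)$, and strictness holds for each fixed small $\delta$; combined with the perturbation argument already used to strictify~\eqref{eq: stabIn}, this gives the strict leading-order inequalities. I expect the main obstacle to be precisely this expansion together with the careful tracking of the phase $e^{\sqrt{-1}n\pi/2}$ and the signs, and in particular confirming that strict positivity of the leading coefficient is not lost in passing to the limit $\delta \to 0$. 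For the inequalities in $(i)$, which require only hypercritical phase, I would argue this strictness directly from the positivity encoded in $\mathcal{H} \ne \emptyset$, while for $(ii)$ I would lean on the convexity input from Lemma~\ref{lem: func} and Theorem~\ref{thm: geoThm} rather than on the limit of strict inequalities alone.
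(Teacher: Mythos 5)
Your proposal follows the paper's own proof essentially verbatim: the paper likewise specializes Proposition~\ref{prop: algObst} to the rank-one flag ideal $\mathfrak{J}=I_V+(t)$, invokes the intersection-theoretic expansion of $Z_{\delta E}$ from \cite[Lemma 7.5]{CY18}, and passes to leading order as $\delta\to 0$, with the unimodular factor $e^{\sqrt{-1}n\frac{\pi}{2}}$ indeed being pure bookkeeping arising from translating between the conventions $(\omega+\sqrt{-1}\alpha)^n$ and $e^{-\sqrt{-1}\omega+\alpha}$, exactly as you anticipate. Your added care about strictness --- the observation that $I_V+(t)\neq(t^r)$ triggers the equality clause of Proposition~\ref{prop: algObst}, together with the perturbation argument and the direct positivity from $\mathcal{H}\neq\emptyset$ for part (i) --- supplies precisely the details the survey leaves implicit by deferring to \cite{CY18}.
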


\subsection{The case of K\"ahler surfaces}
Let us consider the situation when $\dim X=2$.  It is a consequence of the Hodge index theorem that, for any $[\alpha]\in H^{1,1}(X,\mathbb{R})$ we have
\[
Z_{X}([\alpha] )= -\int_{X}e^{-\sqrt{-1}(\omega + \sqrt{-1}\alpha)} = \int_{X}(\omega+\sqrt{-1}\alpha)^2 \in \mathbb{C}^*
\]
Furthermore, it is not hard to see that the lifted angle, when it is defined, is given by
\[
\hat{\theta} = {\rm Arg}(Z_{X}([\alpha]))
\]
where ${\rm Arg}$ denotes the principal value of the argument.  We consider the case when ${\rm Im}(Z_{X}([\alpha])) >0$ (otherwise we can replace $[\alpha]$ with $-[\alpha]$).  The deformed Hermitian-Yang-Mills equation can be written as
\[
-\sin(\hat{\theta})(\omega^2-\alpha^2) + 2\cos(\hat{\theta})\omega\wedge\alpha =0.
\]
Since $\sin(\hat{\theta}) >0$ by assumption, we can rewrite this equation as
\[
(\cot(\hat{\theta})\omega + \alpha)^2=(1+\cot^2(\hat{\theta})) \omega^2.
\]
which is the Monge-Amp\`ere equation for the cohomology class $[\cot(\hat{\theta})\omega+\alpha] \in H^{1,1}(X,\mathbb{R})$.  By Yau's solution of the Calabi conjecture \cite{Y}, this equation has a solution if and only if $\pm [\cot(\hat{\theta})\omega+\alpha]$ is a K\"ahler class.  Since ${\rm Im}(Z_{X}([\alpha])) >0$ it is not hard to show that only $[\cot(\hat{\theta})\omega+\alpha]$ can be K\"ahler.   We have the following lemma

\begin{lem}[C.-Jacob-Yau, \cite{CJY15}]\label{lem: conjDim2}
Let $(X,\omega)$ be a K\"ahler surface, and $[\alpha] \in H^{1,1}(X,\mathbb{R})$ such that ${\rm Im}(Z_{X}([\alpha])) >0$.  Then there exists a solution of the dHYM equation if and only if for every curve $C\subset X$ we have $Z_{C}([\alpha]) = -\int_{C}e^{-\sqrt{-1}(\omega+\sqrt{-1}\alpha)}$ satisfies
\[
{\rm Im}\left(\frac{Z_{C}([\alpha])}{Z_{X}([\alpha])}\right) >0
\]
\end{lem}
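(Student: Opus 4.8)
The plan is to leverage the reduction already carried out just before the statement: since $\dim X = 2$ and ${\rm Im}(Z_X([\alpha]))>0$, the dHYM equation is equivalent to the Monge--Amp\`ere equation for the class $\beta := [\cot(\hat{\theta})\omega+\alpha]$, and by Yau's theorem \cite{Y} this is solvable precisely when $\beta$ is a K\"ahler class (only this sign, not $-\beta$, can be K\"ahler). Thus the whole lemma reduces to the purely cohomological assertion that $\beta$ is K\"ahler if and only if ${\rm Im}(Z_C([\alpha])/Z_X([\alpha]))>0$ for every curve $C$. The essential external input will be the Demailly--P\u{a}un numerical criterion \cite{DP} in its surface form: a class $\gamma$ with $\gamma^2>0$ and $\gamma\cdot\omega>0$ is K\"ahler if and only if $\gamma\cdot C>0$ for every irreducible curve $C\subset X$.

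First I would record the elementary expansions, writing $\omega^2,\ \alpha^2,\ \omega\cdot\alpha,\ \omega\cdot C,\ \alpha\cdot C$ for the relevant intersection numbers and setting $P=\omega^2-\alpha^2$, $Q=\omega\cdot\alpha$. Then $Z_X = P + 2\sqrt{-1}\,Q$, so $\cot\hat{\theta}=P/(2Q)$ with $Q>0$ by hypothesis, while a degree count on a curve gives $Z_C = -(\alpha\cdot C) + \sqrt{-1}\,(\omega\cdot C)$. The key identity is then
\[
{\rm Im}(Z_C\overline{Z_X}) = P\,(\omega\cdot C) + 2Q\,(\alpha\cdot C) = 2Q\,(\beta\cdot C).
\]
Since $|Z_X|^2>0$ and $2Q>0$, this shows that ${\rm Im}(Z_C/Z_X)>0$ is exactly equivalent to $\beta\cdot C>0$. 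This settles the forward direction at once: if the dHYM equation is solvable then $\beta$ is K\"ahler, hence pairs positively with every curve, giving the stated inequality.

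For the converse I would verify the two hypotheses of the Demailly--P\u{a}un criterion for $\beta$. The self-intersection is controlled by the cohomological identity $\beta^2 = (1+\cot^2\hat{\theta})\,\omega^2$, which is simply a restatement of $2\cot\hat{\theta}\,Q = P$, so $\beta^2>0$ automatically. To place $\beta$ in the correct component I would compute $\beta\cdot\omega = (P\,\omega^2 + 2Q^2)/(2Q)$ and invoke the Hodge index theorem in the form $\alpha^2\,\omega^2 \le (\omega\cdot\alpha)^2 = Q^2$, which yields
\[
P\,\omega^2 + 2Q^2 = (\omega^2)^2 - \alpha^2\,\omega^2 + 2Q^2 \ \ge\ (\omega^2)^2 + Q^2 \ >\ 0,
\]
so $\beta\cdot\omega>0$ and $\beta$ lies in the positive-cone component containing $\omega$. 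Combined with $\beta\cdot C>0$ for all $C$ (the hypothesis, via the identity above), Demailly--P\u{a}un gives that $\beta$ is K\"ahler, and Yau's theorem then produces the dHYM solution.

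The main obstacle is really the external input, namely the Demailly--P\u{a}un criterion; once it is available, everything else is bookkeeping in intersection theory. The one genuinely delicate point I would be careful about is the component issue: curve-positivity together with $\beta^2>0$ does \emph{not} by itself force K\"ahlerity, and it is precisely the Hodge-index computation of $\beta\cdot\omega$ that excludes the ``wrong'' component and matches the earlier observation that only $[\cot(\hat{\theta})\omega+\alpha]$, and not its negative, can be K\"ahler.
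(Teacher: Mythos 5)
Your proposal is correct and follows essentially the same route as the paper's (sketched) proof: reduce via Yau's theorem to the K\"ahlerity of $[\cot(\hat{\theta})\omega+\alpha]$, rewrite ${\rm Im}(Z_C/Z_X)>0$ as $(\cot(\hat{\theta})\omega+\alpha)\cdot C>0$, and conclude by the Demailly--P\u{a}un criterion. You merely fill in the details the paper defers to \cite{CJY15} --- in particular the Hodge-index computation showing $\beta\cdot\omega>0$, which correctly handles the positive-cone component issue --- and these computations all check out.
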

\begin{proof}
We only sketch the proof.  Rewriting the condition ${\rm Im}\left(\frac{Z_{C}([\alpha])}{Z_{X}([\alpha])}\right) >0$ yields
\[
\int_{C} \cot(\hat{\theta})\omega + \alpha >0.
\]
It then follows from the Demailly-P\u{a}un theorem \cite{DP} that $[\cot(\hat{\phi})\omega+\alpha]$ is K\"ahler; see \cite{CJY15}.
\end{proof}

Lemma~\ref{lem: conjDim2} shows that the obstructions from Corollary~\ref{cor: obstructions} are both necessary and sufficient in dimension $2$, even without the hypercritical phase assumption.

\subsection{An algebraic framework for stability}

Our goal now is to assemble the obstructions from Corollary~\ref{cor: obstructions} into a {\em purely algebraic} framework which is conjecturally equivalent to the existence of solutions to the dHYM equation.  In order to do this we first need to discuss the lifted angle.  Recall that the lifted angle $\hat{\theta}$ is defined by assuming that $\mathcal{H}\ne \emptyset$, see Definition~\ref{defn: liftedAngle}.  From now on, to make the exposition more clear, we will call this that {\em analytic lifted angle} and denote it by $\hat{\theta}_{an}([\alpha])$.  If one had a positive answer to Question~\ref{que: DP} then the condition that $\mathcal{H} \ne \emptyset$ would be algebraic.  However, determining the lifted angle would {\em still} be essentially analytic, since it depends on understanding the phase of an almost calibrated $(1,1)$ form.  In order to circumvent this difficulty, we give a purely algebraic approach to determining the lifted angle.  This was originally discussed in \cite{CXY}. 

Given a class $[\alpha] \in H^{1,1}(X,\mathbb{R})$ and an irreducible subvariety $V\subset X$ (with $V=X$ allowed) we consider the curve
\begin{equation}\label{eq: sliceMot}
Z_{V, [\alpha]}(t) = -\int_{V}e^{-\sqrt{-1}(t\omega+ \sqrt{-1}\alpha)} = -\frac{(-\sqrt{-1})^{\dim V}}{(\dim V)!} \int_{X}(t\omega + \sqrt{-1}\alpha)^{\dim V}
\end{equation}
where $t$ runs from $+\infty$ to $1$.  We also denote
\[
Z_{V}([\alpha]) := Z_{V,[\alpha]}(1) =  -\int_{V}e^{-\sqrt{-1}(t\omega+ \sqrt{-1}\alpha)}. 
\]
We now make the following definition.

\begin{defn}\label{defn: algLiftAngle}
With the above notation
\begin{enumerate}
\item[(i)] We define the algebraic lifted angle $\hat{\theta}_{V}([\alpha])$ to be the winding angle of the curve $Z_{V, [\alpha]}(t)$ as $t$ runs from $+\infty$ to $1$, provided $Z_{V, [\alpha]}(t) \in \mathbb{C}^*$ for all $t\in[1,\infty]$.
\item[(ii)] We define the slicing angle by
\[
\varphi_{V}([\alpha]) = \hat{\theta}_{V}([\alpha]) -   \frac{\pi}{2}(\dim V-2)
\]
and note that
\[
Z_{V,[\alpha]}(1) \in \mathbb{R}_{>0}e^{\sqrt{-1}\varphi_{V}([\alpha])}.
\]
\end{enumerate}
\end{defn}

The definition of the slicing angle is motivated by~\eqref{eq: sliceMot}. 

\begin{rk}
{\rm There is no obvious ``canonical" path through the space $H^{1,1}(X,\mathbb{C})$ for computing the algebraic lifted angle.  Indeed, the path $[t\omega+\sqrt{-1}\alpha]$ is just one of many possible choices;  different choices of path can yield different values for the algebraic lifted angle.  However, a natural requirement is that the choice of path computes the analytic lifted angle whenever the latter is defined.  Or perhaps less stringently, that the algebraic lifted angle computes the analytic lifted angle when a solution of the dHYM equation exists.  See Lemma~\ref{lem: CNI} below for such a result in dimension $3$.}
\end{rk}

Let us recap the three different angles defined on $X$;
\begin{itemize}
\item The analytic lifted angle $\hat{\theta}_{an}([\alpha])$, defined when $\mathcal{H}\ne \emptyset$, as in Definition~\ref{defn: liftedAngle}.
\item The algebraic lifted angle, $\hat{\theta}_{X}([\alpha])$ defined in Definition~\ref{defn: algLiftAngle} (i).
\item The slicing angle, $\varphi_{X}([\alpha])$ defined in Definition~\ref{defn: algLiftAngle} (ii), which agrees with the lifted angle up to a normalization factor.
\end{itemize}

The only obstacle to defining the algebraically lifted angle is the possibility that $Z_{V, [\alpha]}(T) =0$ for some $T\in [1,\infty]$, since if this occurs the winding angle is no longer well-defined.   When $\dim V=1$, it is easy to see that ${\rm Im}(Z_{V,[\alpha]}(t)) >0$ for all $t$, and hence this cannot occur.  When $\dim V=2$ then $Z_{V,[\alpha]}(t) \in \mathbb{C}^*$ by the Hodge index theorem.  However, when $\dim V >2$ then it happens that $Z_{V,[\alpha]}(t)$ can pass through the origin; examples occur on $Bl_p\mathbb{P}^3$.  Thus, each of the analytic and algebraic lifted angles suffer some deficiencies in general.  On the other hand, we have the following result

\begin{lem}[C.-Xie-Yau, \cite{CXY}]\label{lem: CNI}
Suppose $X$ has dimension $3$ and $[\alpha]$ admits a solution of the dHYM equation with analytic lifted angle $\hat{\theta}_{an}([\alpha]) \in (\frac{\pi}{2}, 3\frac{\pi}{2})$.  Then
\begin{equation}\label{eq: CNI}
\left(\int_{X}\omega^3\right)\left(\int_{X} \frac{\alpha^3}{3!}\right) < 3 \left(\int_{X} \frac{\alpha^2\wedge \omega}{2!}\right)\left(\int_{X} \alpha \wedge \omega^2\right).
\end{equation}
In particular, $Z_{X,[\alpha]}(t) \in \mathbb{C}^*$ for all $t\in [1,\infty]$, and $\hat{\theta}_{X}([\alpha]) = \hat{\theta}_{an}([\alpha])$.
\end{lem}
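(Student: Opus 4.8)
The plan is to make the curve $Z_{X,[\alpha]}(t)$ completely explicit in terms of the four intersection numbers $a=\int_X\omega^3$, $b=\int_X\omega^2\wedge\alpha$, $c=\int_X\omega\wedge\alpha^2$, $d=\int_X\alpha^3$, and to read off all three conclusions from the signs of its real and imaginary parts. Expanding $(t\omega+\sqrt{-1}\alpha)^3$ in $Z_{X,[\alpha]}(t)=-\tfrac{\sqrt{-1}}{6}\int_X(t\omega+\sqrt{-1}\alpha)^3$ gives
\[
\mathrm{Re}\,Z_{X,[\alpha]}(t)=\tfrac{b}{2}t^2-\tfrac{d}{6},\qquad \mathrm{Im}\,Z_{X,[\alpha]}(t)=-\tfrac{a}{6}t^3+\tfrac{c}{2}t=\tfrac{t}{6}(3c-at^2),
\]
and a direct check shows that the inequality~\eqref{eq: CNI} is, after clearing constants, exactly $ad<9bc$.

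Next I would exploit the solution pointwise. Fix the representative $\alpha=\alpha_\phi$ solving the dHYM equation; at each point let $\lambda_1,\lambda_2,\lambda_3$ be the eigenvalues of $\alpha_\phi$ relative to $\omega$, so $\Theta_\omega(\alpha_\phi)=\sum_i\arctan\lambda_i=\hat{\theta}_{an}$ is the constant lifted phase. Setting $\mu:=\tfrac16\omega^3$ one has, in an $\omega$-unitary coframe, $\omega^2\wedge\alpha=2\sigma_1\mu$, $\omega\wedge\alpha^2=2\sigma_2\mu$, $\alpha^3=6\sigma_3\mu$, where $\sigma_k$ are the elementary symmetric functions of the $\lambda_i$. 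From
\[
(\omega+\sqrt{-1}\alpha_\phi)^3=\omega^3\prod_{i=1}^3(1+\sqrt{-1}\lambda_i)=\omega^3\big[(1-\sigma_2)+\sqrt{-1}(\sigma_1-\sigma_3)\big]
\]
and the dHYM equation I read off the pointwise identities $1-\sigma_2=r\cos\hat{\theta}_{an}$, $\sigma_1-\sigma_3=r\sin\hat{\theta}_{an}$ with $r=\prod_i\sqrt{1+\lambda_i^2}>0$. Two positivity facts then follow from $\hat{\theta}_{an}\in(\tfrac\pi2,\tfrac{3\pi}{2})$: first, $\cos\hat{\theta}_{an}<0$ forces $\sigma_2>1$ pointwise; second, writing $\lambda_i=\tan\theta_i$ with $\theta_i\in(-\tfrac\pi2,\tfrac\pi2)$ we have $\theta_i+\theta_j=\hat{\theta}_{an}-\theta_k\in(0,\pi)$, hence $\lambda_i+\lambda_j=\sin(\theta_i+\theta_j)/(\cos\theta_i\cos\theta_j)>0$ for every pair. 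By the identity $\sigma_1\sigma_2-\sigma_3=(\lambda_1+\lambda_2)(\lambda_2+\lambda_3)(\lambda_3+\lambda_1)$ this yields $\sigma_1\sigma_2>\sigma_3$, and combining with $\sigma_2>1$ gives the pointwise bound $\sigma_1>\tan\hat{\theta}_{an}$.

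The Chern number inequality is now short. The tangent addition formula gives the pointwise linear relation $\sigma_3=\sigma_1+\tan\hat{\theta}_{an}\,(\sigma_2-1)$; substituting its integral into the equivalent form $V\int_X\sigma_3\,\mu<(\int_X\sigma_1\,\mu)(\int_X\sigma_2\,\mu)$ of~\eqref{eq: CNI}, where $V=\int_X\mu$, collapses it to the factored inequality
\[
\Big(\int_X\sigma_2\,\mu-V\Big)\Big(\int_X\sigma_1\,\mu-V\tan\hat{\theta}_{an}\Big)>0.
\]
The first factor is positive because $\sigma_2>1$ and the second because $\sigma_1>\tan\hat{\theta}_{an}$ pointwise, and since both inequalities are strict on a set of full measure this proves~\eqref{eq: CNI}.

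Finally I would deduce the ``in particular'' statements by tracing the explicit curve. Since $\sigma_2>1$ gives $c>0$ and $3c>a$, the imaginary part has a unique positive zero $t^\ast=\sqrt{3c/a}>1$, at which $\mathrm{Re}\,Z_{X,[\alpha]}(t^\ast)=\tfrac{9bc-ad}{6a}>0$ by~\eqref{eq: CNI}; hence $Z_{X,[\alpha]}(t)\neq0$ on $[1,\infty]$. For the angle, track the continuous argument of $Z_{X,[\alpha]}(t)$ as $t$ decreases from $+\infty$: the cubic imaginary term dominates and is negative, so the argument starts at $-\tfrac\pi2$; the curve stays in the open lower half-plane until $t^\ast$, where it hits the positive real axis (argument $0$); and on $(1,t^\ast)$ it stays in the open upper half-plane since $\mathrm{Im}>0$ there, ending at $\arg Z_{X,[\alpha]}(1)\in(0,\pi)$. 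On the other hand the solution gives $Z_{X,[\alpha]}(1)\in\mathbb{R}_{>0}\,e^{\sqrt{-1}(\hat{\theta}_{an}-\pi/2)}$ with $\hat{\theta}_{an}-\tfrac\pi2\in(0,\pi)$; as both values lie in the same interval of length $\pi$ they coincide, so the winding gives $\hat{\theta}_X([\alpha])=\hat{\theta}_{an}([\alpha])$. I expect the one genuinely nonroutine step to be the pointwise argument of the second paragraph, namely recognizing that $\hat{\theta}_{an}>\tfrac\pi2$ makes every pairwise sum $\lambda_i+\lambda_j$ positive and packaging this through $\sigma_1\sigma_2-\sigma_3=\prod_{i<j}(\lambda_i+\lambda_j)$; once $\sigma_2>1$ and $\sigma_1>\tan\hat{\theta}_{an}$ are in hand, the rest is bookkeeping with the explicit cubic.
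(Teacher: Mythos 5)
Your proposal is correct, and since the survey states this lemma without proof (deferring to \cite{CXY}), the right comparison is with that source: your argument --- the pointwise supercritical-phase facts $\sigma_2>1$ (from $\cos\hat{\theta}_{an}<0$) and $\lambda_i+\lambda_j>0$ (hence $\sigma_1\sigma_2-\sigma_3=(\lambda_1+\lambda_2)(\lambda_2+\lambda_3)(\lambda_3+\lambda_1)>0$, so $\sigma_1>\tan\hat{\theta}_{an}$), combined with the pointwise linear relation $\sigma_3=\sigma_1+\tan\hat{\theta}_{an}(\sigma_2-1)$ coming from the equation, integrated and factored as $\left(\int_X\sigma_2\,\mu-V\right)\left(\int_X\sigma_1\,\mu-V\tan\hat{\theta}_{an}\right)>0$ --- is essentially the Collins--Xie--Yau proof. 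Your verification of the ``in particular'' clause (unique zero $t^{\ast}=\sqrt{3c/a}>1$ of ${\rm Im}\,Z_{X,[\alpha]}$, ${\rm Re}\,Z_{X,[\alpha]}(t^{\ast})=(9bc-ad)/(6a)>0$ by~\eqref{eq: CNI}, and matching the winding-angle value with $\hat{\theta}_{an}-\frac{\pi}{2}\in(0,\pi)$) is also complete and consistent with the normalization conventions of Definition~\ref{defn: algLiftAngle}.
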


Note that~\eqref{eq: CNI} is precisely the line bundle case of the inequality conjectured by Bayer-Macri-Toda \cite[Conjecture 3.2.7]{BMT} to hold for tilt-stable objects in their construction of Bridgeland stability conditions.  Schmidt \cite{Sch} has shown that such an inequality cannot hold for tilt-stable objects in general by constructing counterexamples on $Bl_p\mathbb{P}^3$.  It is interesting that the inequality {\em does hold} if instead of til-stability we impose the solvability of the dHYM equation.

Combining Lemma~\ref{lem: CNI} with the above discussion we get

\begin{cor}\label{cor: algLift}
Suppose $X$ has dimension $3$ and $[\alpha]$ admits a solution of the dHYM equation with phase $\hat{\theta} \in (\frac{\pi}{2}, 3\frac{\pi}{2})$.  Then the algebraic lifted angle $\hat{\theta}_{V}([\alpha])$ is well defined for any $V\subset X$, including $V=X$, and we have
\[
\hat{\theta}([\alpha])_{an} =  \hat{\theta}_{X}([\alpha]).
\]
\end{cor}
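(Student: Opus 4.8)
The plan is to prove the statement by a case analysis on $\dim V$, reducing everything to Lemma~\ref{lem: CNI}. By Definition~\ref{defn: algLiftAngle}, the only thing that must be checked for $\hat{\theta}_V([\alpha])$ to be well-defined is that the slice $Z_{V,[\alpha]}(t)$ stays in $\mathbb{C}^*$ for all $t\in[1,\infty]$; the starting phase at $t=+\infty$ is automatically pinned down, since the leading term of~\eqref{eq: sliceMot} is a nonzero multiple of $t^{\dim V}\int_V\omega^{\dim V}>0$ (as $\omega|_V$ is Kähler). So the entire content is to verify $Z_{V,[\alpha]}(t)\in\mathbb{C}^*$ on $[1,\infty]$ for each $V$, together with the identification $\hat{\theta}_{an}([\alpha]) = \hat{\theta}_X([\alpha])$.

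First I would dispose of the proper subvarieties, where $\dim V\in\{0,1,2\}$ because $\dim X=3$. For $\dim V=0$ the slice is a negative real constant (the degree with multiplicity), hence nonzero. For $\dim V=1$ a one-line expansion of~\eqref{eq: sliceMot} gives ${\rm Im}\,Z_{V,[\alpha]}(t)=t\int_V\omega>0$ for all $t\geq 1$, so the slice stays in the open upper half-plane and its winding angle is unambiguous. For $\dim V=2$ I would split $Z_{V,[\alpha]}(t)$ into ${\rm Re}=\tfrac12\bigl(t^2\int_V\omega^2-\int_V\alpha^2\bigr)$ and ${\rm Im}=t\int_V\omega\wedge\alpha$, and argue via the Hodge index theorem: if ${\rm Im}=0$ then $\int_V\omega\wedge\alpha=0$, whence $\int_V\alpha^2\leq 0$, forcing ${\rm Re}\geq\tfrac{t^2}{2}\int_V\omega^2>0$. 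In particular the slice never vanishes. Note that in all three of these cases the existence of a dHYM solution is not used at all.

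The crux is the remaining case $V=X$, which is the only irreducible threefold in $X$. Here $Z_{X,[\alpha]}(t)$ genuinely can pass through the origin — this is exactly what occurs on $\mathrm{Bl}_p\mathbb{P}^3$ — so well-definedness is a real constraint rather than an elementary positivity statement. This is precisely where I would invoke Lemma~\ref{lem: CNI}: the hypothesis that $[\alpha]$ solves the dHYM equation with phase in $(\tfrac{\pi}{2},3\tfrac{\pi}{2})$ forces the Chern number inequality~\eqref{eq: CNI}, and Lemma~\ref{lem: CNI} already records that this inequality implies both $Z_{X,[\alpha]}(t)\in\mathbb{C}^*$ for all $t\in[1,\infty]$ (so $\hat{\theta}_X([\alpha])$ is defined) and the equality $\hat{\theta}_X([\alpha])=\hat{\theta}_{an}([\alpha])$.

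Thus the main obstacle is entirely concentrated in the top-dimensional slice, and it is resolved not by a new argument but by feeding the corollary's hypotheses into Lemma~\ref{lem: CNI}; the lower-dimensional cases hold unconditionally. I therefore expect the write-up to be short, the one genuine bit of bookkeeping being to confirm that the ``phase $\hat{\theta}$'' of the hypothesis is literally the analytic lifted angle $\hat{\theta}_{an}([\alpha])$ of Definition~\ref{defn: liftedAngle} appearing in Lemma~\ref{lem: CNI}, so that the two statements compose cleanly.
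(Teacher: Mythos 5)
Your proposal is correct and follows essentially the same route as the paper: the paper's proof is literally ``combining Lemma~\ref{lem: CNI} with the above discussion,'' where that discussion disposes of $\dim V=1$ via ${\rm Im}\,Z_{V,[\alpha]}(t)>0$ and $\dim V=2$ via the Hodge index theorem (unconditionally, as you note), leaving only the top-dimensional slice $V=X$, whose non-vanishing and the identification $\hat{\theta}_{X}([\alpha])=\hat{\theta}_{an}([\alpha])$ are exactly the ``in particular'' clause of Lemma~\ref{lem: CNI}. Your write-up simply makes explicit the bookkeeping the paper leaves implicit, including the trivial $\dim V=0$ case and the pinning of the asymptotic phase at $t=+\infty$.
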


Corollary~\ref{cor: algLift} allows us to recast our obstructions in a purely algebraic way in dimension $3$.  In order to facilitate our comparison with Bridgeland stability conditions in the next section, let us assume from now on that $[\alpha]=c_1(L)$.  Assuming $L$ admits a solution of the dHYM equation with right hand side $\hat{\theta}_{an}(L) \in (\pi, 3\frac{\pi}{2})$ one can easily check that
\[
Z_{X}(L) = -\int_{X}e^{-\sqrt{-1}\omega}ch(L) \in \{ z \in \mathbb{C} : {\rm Im}(z)>0, \, {\rm Re}(z) <0 \}
\]
By Corollary~\ref{cor: algLift} the algebraic lifted angle $\hat{\theta}_{X}(L)$ is equal to the $\hat{\theta}_{an}(L)$.  Next, by Corollary~\ref{cor: obstructions}, (i), for any $V\subset X$ irreducible analytic subvariety the complex numbers 
\[
Z_{V}(L) = -\int_{V} e^{-\sqrt{-1}\omega}ch(L) \in  \{ z \in \mathbb{C} : {\rm Im}(z)>0 \}
\]
Furthermore, by Corollary~\ref{cor: obstructions}, (ii) we have
\[
{\rm Im}\left( \frac{Z_{V}(L)}{Z_{X}(L)}\right) >0.
\]
We illustrate this situation in Figure~\ref{fig: stabPic}.

\begin{figure}
\begin{center}
%\psset{unit=.007in}
\begin{tikzpicture}[scale=1.5]
%\draw [help lines](0,0) grid (6,6);
\draw[lightgray!50, fill=lightgray!50](3,1) -- (0,2.5) -- (0,1);
\draw[thick] (3,3)--(3,0);
\draw[thick] (0,1)--(6,1);
\draw[dashed](4,0) arc [radius =1.8, start angle=-20, end angle = 15];
\draw (4.065,1) arc [radius =1.8, start angle=10, end angle = 133];
\draw[thick, dashed](3,1)--(0,2.5);
\draw[fill](1.04,1.98) circle [radius=0.05];
\node [above] at (1,2.3) {$Z_{X}(L)$};
\node [above] at (4,2) {$Z_{X}(t)$};
\end{tikzpicture}
\caption{The path $Z_{X}(L)(t)$, and its endpoint $Z_{X}(L)$.  If $L$ admits a solution of the dHYM equation with $\hat{\theta} \in (\pi, 3\frac{\pi}{2})$, then $Z_{V}(L)$ must lie in the gray region for every irreducible analytic set $V\subset X$.} \label{fig: stabPic}
\end{center}
\end{figure}
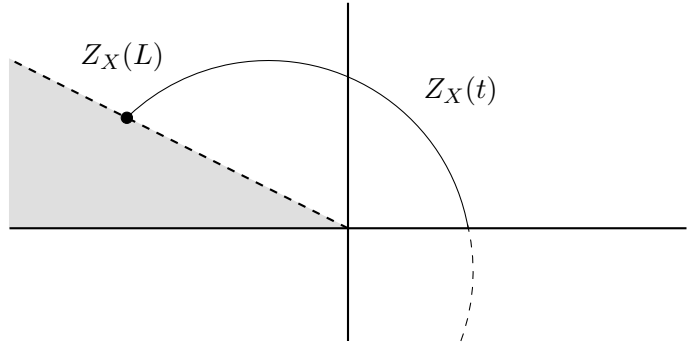

We are now going to determine the slicing angles for each $V\subset X$.  By assumption, we have $\hat{\theta}_{X}(L) \in(\pi, \frac{3\pi}{2})$, and so $\varphi_{X}(L)\in (\frac{\pi}{2}, \pi)$.  If $V$ has dimension $1$, then
\[
Z_{V, L}(t) = -\int_{V}c_1(L) + \sqrt{-1}t\int_{V}\omega.
\]
Since $Z_{V}(L)$ must lie in the shaded region in Figure~\ref{fig: stabPic}, we see that the lifted angle $\hat{\theta}_{V}(L) \in (\varphi_{X}(L)-\frac{\pi}{2},\frac{\pi}{2})$, and so
\[
\varphi_{V}(L) \in(\varphi_{X}(L), \pi).
\]
If $V$ has dimension $2$ then we have
\[
2Z_{V, L}(t) = \int_{V}t^2\omega^2-c_{1}(L)^2 + \sqrt{-1}2t\int_{V}c_{1}(L)\wedge \omega
\]
In this case ${\rm Im}(Z_{V}(L)) >0$, and so $Z_{V, L}(t)$ must lie in $\mathbb{H}$ for all $t\in[1,+\infty)$.  It follows that the lifted angle must satisfy $\hat{\theta}_{V}(L) \in (\varphi_{X}(L), \pi)$.  Since ${\rm dim}V =2$ we get
\[
\varphi_{V}(L) \in (\varphi_{X}(L), \pi).
\]
We summarize this in the following proposition,
\begin{prop}\label{prop: BridgStabInt}
Suppose that $(X,\omega)$ is a K\"ahler 3-fold and $L\rightarrow X$ is a holomorphic line bundle.  If $L$ admits a solution of dHYM with lifted angle $\hat{\theta}\in(\pi, \frac{3\pi}{2})$.  Then
\begin{itemize}
\item[(i)] The Chern number inequality~\eqref{eq: CNI} holds, and so the lifted angle is well-defined.
\item[(ii)] $Z_{X}(L) \in \{z \in \mathbb{C} : {\rm Im}(z)>0\}$, and the slicing angle $\phi_{X}(L) \in (\frac{\pi}{2}, \pi)$.
\item[(iii)]  For every irreducible analytic subset $V\subset X$, $Z_{V}(L) \in \{z \in \mathbb{C} : {\rm Im}(z)>0\}$.
\item[(iv)] For every irreducible analytic subset $V\subset X$, the slicing angle $\phi_{V}(L)$ satisfies
\begin{equation}\label{eq: BridStabIneq}
\varphi_{V}(L) > \varphi_{X}(L).
\end{equation}
\end{itemize}
\end{prop}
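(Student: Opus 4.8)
The four assertions are a synthesis of the obstructions assembled above, so the plan is to read each one off from a single prior input, organizing the discussion according to whether $V=X$ (items (i)--(ii)) or $V$ is a proper subvariety (items (iii)--(iv)). First I would dispose of (i): the hypothesis $\hat{\theta}_{an}(L)\in(\pi,\tfrac{3\pi}{2})$ lies inside the window $(\tfrac{\pi}{2},\tfrac{3\pi}{2})$ of Lemma~\ref{lem: CNI}, so that lemma applies directly and delivers the Chern number inequality~\eqref{eq: CNI}. As noted there, \eqref{eq: CNI} is equivalent to $Z_{X,L}(t)\in\mathbb{C}^{*}$ for all $t\in[1,\infty]$, so the winding angle in Definition~\ref{defn: algLiftAngle} is unobstructed and the lifted angle is well defined; Corollary~\ref{cor: algLift} moreover identifies $\hat{\theta}_{X}(L)=\hat{\theta}_{an}(L)$.

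For (ii) I would expand the defining integral~\eqref{eq: sliceMot} in dimension three, obtaining $Z_{X}(L)=-\tfrac{\sqrt{-1}}{6}\int_{X}(\omega+\sqrt{-1}\alpha)^{3}$, whence $\arg Z_{X}(L)=\hat{\theta}_{an}(L)-\tfrac{\pi}{2}$. By (i) this equals the slicing angle $\varphi_{X}(L)=\hat{\theta}_{X}(L)-\tfrac{\pi}{2}$, which therefore lies in $(\tfrac{\pi}{2},\pi)$; in particular $Z_{X}(L)$ sits in the open upper half plane (indeed the second quadrant). Item (iii) is then immediate: for $V=X$ it is contained in (ii), while for a proper irreducible $V$---so $1\le\dim V\le 2$---the bound ${\rm Im}(Z_{V}(L))>0$ is exactly Corollary~\ref{cor: obstructions}(i).

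The real content is (iv). I would compute $Z_{V,L}(t)$ from~\eqref{eq: sliceMot} in the two relevant dimensions: $Z_{V,L}(t)=-\int_{V}c_{1}(L)+\sqrt{-1}\,t\int_{V}\omega$ when $\dim V=1$, and $2Z_{V,L}(t)=\int_{V}(t^{2}\omega^{2}-c_{1}(L)^{2})+\sqrt{-1}\,2t\int_{V}c_{1}(L)\wedge\omega$ when $\dim V=2$. In each case ${\rm Im}(Z_{V,L}(t))$ is $t$ times a strictly positive constant---positive by K\"ahler positivity of $\int_{V}\omega$ for curves, and positive because ${\rm Im}(Z_{V}(L))>0$ (Corollary~\ref{cor: obstructions}(i)) forces $\int_{V}c_{1}(L)\wedge\omega>0$ for surfaces---so the path stays in $\mathbb{H}$ for every $t\in[1,\infty)$, never meets the origin, and its argument varies continuously from the leading value $\tfrac{\pi}{2}(2-\dim V)$ at $t=\infty$. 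Hence $\varphi_{V}(L)=\arg Z_{V}(L)\in(0,\pi)$. To produce the strict inequality~\eqref{eq: BridStabIneq} I would feed in Corollary~\ref{cor: obstructions}(ii): the condition ${\rm Im}(Z_{V}(L)/Z_{X}(L))>0$ says $\arg Z_{V}(L)\in(\varphi_{X}(L),\varphi_{X}(L)+\pi)$, and intersecting this with $\arg Z_{V}(L)\in(0,\pi)$, using $\varphi_{X}(L)>\tfrac{\pi}{2}$ from (ii), pins $\varphi_{V}(L)$ into $(\varphi_{X}(L),\pi)$. This is precisely the geometric statement that $Z_{V}(L)$ lands in the shaded region of Figure~\ref{fig: stabPic}.

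Every step downstream of the explicit intersection-theoretic expansions is planar trigonometry, so the genuine difficulty is imported rather than internal: the argument stands or falls on Lemma~\ref{lem: CNI} and Corollary~\ref{cor: obstructions}, which in turn rest on the existence and convexity properties of geodesics under the hypercritical phase hypothesis (Theorem~\ref{thm: geoThm} and Lemma~\ref{lem: func}). The main point to watch, then, is essentially bookkeeping: checking that $\hat{\theta}_{an}(L)\in(\pi,\tfrac{3\pi}{2})$ is simultaneously the hypercritical range $((n-1)\tfrac{\pi}{2},n\tfrac{\pi}{2})$ needed for Corollary~\ref{cor: obstructions} and inside the larger window of Lemma~\ref{lem: CNI}, and that the winding-angle normalization $Z_{V,L}(1)\in\mathbb{R}_{>0}e^{\sqrt{-1}\varphi_{V}(L)}$ of Definition~\ref{defn: algLiftAngle} is applied consistently across the values $\dim V=1,2,3$.
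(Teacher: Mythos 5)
Your proposal is correct and follows essentially the same route as the paper, which proves this proposition via the discussion immediately preceding it: Lemma~\ref{lem: CNI} and Corollary~\ref{cor: algLift} for (i)--(ii), Corollary~\ref{cor: obstructions}(i) for (iii), and for (iv) the same explicit expansions of $Z_{V,L}(t)$ in dimensions $1$ and $2$, the observation that the path stays in $\mathbb{H}$, and the wedge constraint ${\rm Im}(Z_V(L)/Z_X(L))>0$ from Corollary~\ref{cor: obstructions}(ii) pinning $\varphi_V(L)$ into $(\varphi_X(L),\pi)$. Your bookkeeping of the winding-angle normalization and of the hypercritical-phase range matches the paper's conventions, so nothing is missing.
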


It is natural to propose 
\begin{conj}[C.-Yau, \cite{CY18}]\label{conj: CY}
The converse of Proposition~\ref{prop: BridgStabInt} holds.
\end{conj}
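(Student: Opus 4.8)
The plan is to establish the converse as the \emph{hard direction} of a Kempf--Ness correspondence, in direct analogy with the stability $\Rightarrow$ existence implication of Theorem~\ref{thm: DUY}. Since the hypothesis $\hat{\theta}\in(\pi,\tfrac{3\pi}{2})$ is precisely the hypercritical phase condition when $n=3$ (as $((n-1)\tfrac{\pi}{2},n\tfrac{\pi}{2})=(\pi,\tfrac{3\pi}{2})$), the full geodesic apparatus of Section~\ref{sec: GIT} is available: by Theorem~\ref{thm: geoThm} any two points of $\mathcal{H}$ are joined by a $C^{1,1}$ geodesic along which $\mathcal{J}$ is defined and convex, and by Theorem~\ref{thm: CAT0} the completion $(\widetilde{\mathcal{H}},\tilde{d})$ is a $CAT(0)$ length space with an intrinsically defined boundary at infinity. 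The whole problem therefore reduces to showing that the convex functional $\mathcal{J}$ attains its infimum in $\mathcal{H}$; its unique critical point is then a solution of the dHYM equation~\eqref{eq: dhymIntro}.

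Before the variational argument can begin one must know $\mathcal{H}\neq\emptyset$, i.e. that an almost calibrated representative of $c_1(L)$ exists; this is a positive answer to Question~\ref{que: DP} in dimension $3$. I would attempt to deduce it from the slicing inequalities~\eqref{eq: BridStabIneq} by a Demailly--P\u{a}un type argument, exactly as in dimension $2$: the conditions $\mathrm{Im}(Z_V(L))>0$ for all $V$, combined with the Hodge index theorem, should force a suitable real $(1,1)$-class (a $\cot$-twist of $\alpha$, as in Lemma~\ref{lem: conjDim2}) into the K\"ahler cone, from which an almost calibrated form is produced.

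The heart of the matter is to show that $\mathcal{J}$ is proper, or coercive, modulo constants. By convexity and the $CAT(0)$ structure, properness is equivalent to strict positivity of the asymptotic slope $\mathcal{J}^{\infty}(\phi):=\lim_{s\to\infty}\tfrac{d}{ds}\mathcal{J}(\phi(s))$ along every unit-speed geodesic ray $\phi(s)$ emanating into $\partial\widetilde{\mathcal{H}}$. The direction already proven (Proposition~\ref{prop: algObst}) computes $\mathcal{J}^{\infty}$ along the distinguished rays built from flag ideals $\mathfrak{J}$ and identifies it with the quantities $\mathrm{Im}(Z_{\delta E}/Z_X)$, which under~\eqref{eq: BridStabIneq} are strictly positive whenever $\mathfrak{J}$ is not of the trivial form $(t^r)$. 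The task is thus to propagate this positivity from the algebraic rays to \emph{all} geodesic rays. I would do this in two steps: first, prove a structure/approximation theorem showing that every geodesic ray in $\widetilde{\mathcal{H}}$ is approximated, without increasing the limiting slope, by rays associated to flag ideals — the dHYM analogue of the correspondence between geodesic rays and test configurations, built on the envelope constructions of Darvas--Rubinstein \cite{DarRu} and Ross--Witt Nystr\"om \cite{RWN}; second, invoke lower semicontinuity of $\mathcal{J}^{\infty}$ under this approximation to conclude $\mathcal{J}^{\infty}(\phi)>0$ in general.

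The main obstacle is precisely this structure step: classifying geodesic rays in $\widetilde{\mathcal{H}}$ and controlling their asymptotic slopes. The geodesics are only $C^{1,1}$ by Remark~\ref{rk: noReg}, so $\mathcal{J}^{\infty}$ must be handled through the pluripotential/viscosity formulation rather than by naive differentiation, and it is not a priori clear that the flag-ideal rays are slope-dense among all rays; one expects subtle rays — those concentrating on non-algebraic positive currents, or Monge--Amp\`ere type envelopes — to require positivity input beyond the countable family of subvarieties $V$ tested by Corollary~\ref{cor: obstructions}. A secondary obstacle is regularity: having produced a minimizer in $\widetilde{\mathcal{H}}$, one must upgrade it to a smooth solution, which requires the a priori estimates for the dHYM equation in the hypercritical phase range. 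If both the slope-density and the regularity bootstrap can be carried out, the conjectured converse, and hence the full equivalence asserted in Conjecture~\ref{conj: CY}, would follow.
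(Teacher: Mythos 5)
You are attempting to prove Conjecture~\ref{conj: CY}, which the paper explicitly poses as \emph{open}: it contains no proof, only verifications in special cases (dimension two via Lemma~\ref{lem: conjDim2}, and ${\rm Bl}_p\mathbb{P}^n$ via Jacob--Sheu \cite{JS}, the latter by an ODE reduction, not by the variational route). Your text is thus a research program rather than a proof, and while it faithfully reproduces the Kempf--Ness heuristic the authors describe after Lemma~\ref{lem: func}, it contains concrete gaps beyond the two you flag. First, there is a mismatch between the hypotheses you are given and the positivity your scheme needs: Proposition~\ref{prop: BridgStabInt} tests only irreducible subvarieties $V$, whereas the slopes of $\mathcal{J}$ along the algebraic rays are the quantities ${\rm Im}\left(Z_{\delta E}/Z_X(L)\right)$ attached to arbitrary flag ideals~\eqref{eq: flag}. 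Corollary~\ref{cor: obstructions} is merely the leading term in $\delta$ of Proposition~\ref{prop: algObst} for the simplest flag ideal ($r=1$, $\mathfrak{I}_0=I_V$); for a general flag ideal, $Z_{\delta E}$ involves lower-order intersection numbers on a log-resolution that are not determined by the numbers $Z_V(L)$. So even along the distinguished algebraic rays, the input \eqref{eq: BridStabIneq} does not obviously yield positive slope --- the passage from subvariety inequalities to flag-ideal inequalities is itself a missing algebro-geometric step. Moreover, strict positivity of $\varphi_V(L)-\varphi_X(L)$ for each of infinitely many $V$ gives no uniform lower bound, so even granting your slope-density step the infimum of $\mathcal{J}^{\infty}$ over all rays could vanish and properness would not follow; one expects to need a \emph{uniform} strengthening of \eqref{eq: BridStabIneq}, which the conjecture as stated does not supply. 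This is the familiar uniform-versus-plain stability discrepancy from K-stability, and it is why positivity on each ray does not by itself produce a minimizer.

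Second, your plan to settle $\mathcal{H}\neq\emptyset$ (indeed, hypercritical phase) by a ``$\cot$-twist plus Demailly--P\u{a}un'' argument does not transfer from dimension two: Lemma~\ref{lem: conjDim2} rests on the fact that in dimension two the dHYM equation \emph{is} a Monge--Amp\`ere equation for the class $[\cot(\hat{\theta})\omega+\alpha]$, via the Hodge index theorem, and no analogous reduction exists in dimension three; non-emptiness of $\mathcal{H}$ is raised as an open question in Section~\ref{sec: GIT}, and hypercritical phase is a strictly stronger pointwise constraint on $\Theta_\omega(\alpha_\phi)$ than almost calibratedness. Finally, the endgame is circular as proposed: a minimizer, if produced, lives in the completion $\widetilde{\mathcal{H}}$ with at best $C^{1,1}$-type regularity, and the only existence/regularity mechanism available, Theorem~\ref{thm: existence thm} of \cite{CJY15}, requires a smooth subsolution $\chi$ satisfying \eqref{eq: Csub} --- essentially the object your whole argument is trying to construct. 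In short, the three steps you would need (flag-ideal positivity from subvariety data, uniform coercivity/slope-density of algebraic rays, and the regularity bootstrap) are precisely the content of the open conjecture \cite{CY18}, not lemmas one can presently cite.
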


Note that the ``small radius limit" of this conjecture for ample line bundles over toric varieties was proven by the first author and Sz\'ekelyhidi \cite{CoSz}, and without the toric assumption in \cite{GC}; see \cite{CXY} for a discussion.  One can formulate analogous conjectures in higher dimensions.  However, in dimension $4$ and higher one needs to impose further Chern number inequalities on $X$, as well as Chern number inequalities on $V\subset X$ to ensure that the slicing angle $\varphi_{V}(L)$ is well-defined; see \cite{CY18} for a discussion, and \cite{GC} for some progress.

There is one situation in which we have an essentially complete solution to Conjecture~\ref{conj: CY}.  Let $X=Bl_{p}\mathbb{P}^n$ be the blow up of the projective space at a point, and let $E$ be the exceptional divisor of the blow-up, and let $H$ denote the proper transform of the hyperplane.  Then $X\backslash (H\cup E) = \mathbb{C}^n$, and following Calabi \cite{Cal} one can look for rotationally invariant $(1,1)$ forms on $\mathbb{C}^n$ which extend over $H\cup E$ to globally defined forms on $X$.  In this case the dHYM equation reduces to an ODE and one can study the existence of solutions using essentially algebraic techniques.  We have the following theorem

\begin{thm}[Jacob-Sheu, \cite{JS}]
Let $X = Bl_p\mathbb{P}^n$, and let $\omega$ be a K\"ahler metric on $X$, and let $[\alpha]\in H^{1,1}(X,\mathbb{R})$ be any class.  Then $[\alpha]$ admits a solution of the dHYM equation if and only if
\[
Z_{X}([\alpha]) \in \mathbb{C}^*
\]
and, for any $V\subset X$ we have
\[
{\rm Im}\left(\frac{Z_{V}([\alpha])}{Z_{X}([\alpha])}\right) >0.
\]
\end{thm}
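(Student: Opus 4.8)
The plan is to reduce the PDE to an ODE by exploiting the rotational symmetry of $X = \Bl_p\mathbb{P}^n$, and then to read off the solvability condition directly from the asymptotic data of that ODE. Since $X\setminus(H\cup E) \cong \mathbb{C}^n$, I would first set up Calabi-type ansatz: write both the K\"ahler form $\omega$ and a candidate representative $\alpha_\phi = \alpha + \ddb\phi$ as $U(n)$-invariant $(1,1)$ forms on $\mathbb{C}^n$, depending only on $r = |z|^2$. Under this ansatz, an invariant form is determined by a single profile function, and the eigenvalues $\lambda_i$ of $\alpha_\phi$ relative to $\omega$ take only two distinct values (one with multiplicity $n-1$, tangent to the spheres, and one in the radial direction), each expressible in terms of the profile and its derivative. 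Substituting into $\Theta_\omega(\alpha_\phi) = \hat\theta$ collapses the fully nonlinear elliptic equation~\eqref{eq: dhymIntro} into a first-order ODE for the radial profile; one then integrates once to obtain an explicit closed form, with the integration constant fixed by the requirement that the solution extend smoothly across both $E$ (at $r\to 0$) and $H$ (at $r\to\infty$) to a global form on $X$.

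The heart of the argument is matching the boundary behavior of the ODE solution to the cohomological positivity conditions. The invariant subvarieties $V\subset X$ relevant to the inequality ${\rm Im}(Z_V([\alpha])/Z_X([\alpha]))>0$ are essentially the exceptional divisor $E$, the proper transform $H$, and their intersections/linear subspaces — these are exactly the loci where the radial variable degenerates. I would show that the extendability of the ODE profile across $r=0$ and $r=\infty$, together with the almost-calibrated (positivity) requirement ${\rm Re}(e^{-\sqrt{-1}\hat\theta}(\omega+\sqrt{-1}\alpha_\phi)^n)>0$ along the whole flow, translates precisely into the statements that $Z_X([\alpha])\in\mathbb{C}^*$ and that each slicing quantity $Z_V([\alpha])$ lies on the correct side of the ray determined by $Z_X([\alpha])$. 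The necessity direction already follows from the general theory earlier in the excerpt (Corollary~\ref{cor: obstructions}), so the real content is \emph{sufficiency}: given the intersection-theoretic inequalities, one must produce an admissible profile, which amounts to verifying that the endpoints of the integrated ODE land in the range where $\arctan$ of the two eigenvalues can be summed to the prescribed constant $\hat\theta$ without the phase escaping the allowed window.

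Concretely, I expect the proof to proceed as follows. First, parametrize invariant metrics and derive the two-eigenvalue formula. Second, write down the Lagrangian phase as a function of the radial parameter and integrate the equation $\Theta_\omega(\alpha_\phi)=\hat\theta$ to get the profile explicitly. Third, analyze the two limits $r\to 0$ and $r\to\infty$, expressing the boundary values of the profile in terms of the intersection numbers $\int_V \omega^k\wedge\alpha^{\dim V - k}$ that assemble into $Z_V([\alpha])$ and $Z_X([\alpha])$. Fourth, check that smooth closure over $E$ and $H$ holds exactly when the slicing inequalities are satisfied, and that positivity is preserved throughout. The main obstacle I anticipate is the third step: one must track how the monotonicity of the radial phase interacts with the sign conditions so that the accumulated winding of $Z_{V,[\alpha]}(t)$ matches the prescribed lifted angle. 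Because the winding angle can in principle pass through the origin in dimension $\geq 3$ (as the excerpt notes for $\Bl_p\mathbb{P}^3$), the delicate point is ruling out degenerate intermediate behavior and confirming that the inequalities ${\rm Im}(Z_V/Z_X)>0$ are strong enough to guarantee a globally defined, positive, smooth profile — i.e., that these algebraic conditions are not merely necessary but genuinely suffice to solve the ODE with the correct boundary data.
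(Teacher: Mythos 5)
First, a caveat on the comparison: the survey does not actually prove this theorem --- it is quoted from Jacob--Sheu \cite{JS}, listed as in preparation --- and the paper records only the strategy: Calabi's rotationally symmetric ansatz on $X\setminus(H\cup E)\cong\mathbb{C}^n$ reduces the dHYM equation to an ODE, which is then analyzed by ``essentially algebraic techniques.'' Your outline (the two-eigenvalue structure of an invariant $(1,1)$ form, with one eigenvalue of multiplicity $n-1$ tangent to the spheres and one radial; the collapse of $\Theta_\omega(\alpha_\phi)=\hat\theta$ to a first-order radial equation that integrates once; the matching of the boundary behavior at $r\to 0$ and $r\to\infty$ with the intersection numbers assembling into $Z_E$, $Z_H$, $Z_X$) is a faithful reconstruction of that strategy. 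Your implicit reduction of the inequality for arbitrary $V$ to the invariant subvarieties is also legitimate, since $Z_V([\alpha])$ depends only on the class $[V]$, which lies in the effective cone generated by linear subspaces of $E$ and proper transforms of linear subspaces of $\mathbb{P}^n$; it would be worth saying this explicitly.

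There is, however, one concrete gap: you dispose of the necessity direction by citing Corollary~\ref{cor: obstructions}, but that corollary (like Proposition~\ref{prop: algObst} from which it derives) assumes $[\alpha]$ has hypercritical phase, because the geodesic and Kempf--Ness machinery behind it is only available in that range. The theorem you are proving makes \emph{no} phase assumption, and the survey explicitly flags this (``this result does not assume that $[\alpha]$ has hypercritical phase'') as a notable feature of \cite{JS}. So for general lifted angle, the necessity of ${\rm Im}\left(Z_V([\alpha])/Z_X([\alpha])\right)>0$ must itself be extracted from the ODE and symmetry analysis rather than quoted from the general theory. A related soft spot is that your third and fourth steps lean on the winding-angle definition of the lifted phase, but as the survey notes, the path $Z_{V,[\alpha]}(t)$ can pass through the origin when $\dim V>2$, with examples occurring precisely on $Bl_p\mathbb{P}^3$; hence the winding angle need not be well-defined a priori. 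According to the survey, Jacob--Sheu sidestep exactly this by computing the lifted phase directly from the symmetry at hand rather than via the path $Z_{X,[\alpha]}(t)$, so your argument should replace the winding-angle bookkeeping by such a symmetry-based determination of $\hat\theta$ instead of attempting to ``confirm'' that the accumulated winding matches.
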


Note that this result {\em does not} assume that $[\alpha]$ has hypercritical phase. There are several other features of this result which seem to be special to the case of $Bl_{p}\mathbb{P}^n$.  First, in the hypercritical phase case the stability type inequalities ${\rm Im}\left(\frac{Z_{V}([\alpha])}{Z_{X}([\alpha])}\right) >0$ imply that ${\rm Im}(Z_{V}) >0$ for all $V\subset X$; this seems to be non-trivial.  Secondly, the authors bypass the Chern number inequalities needed to define the algebraic lifted phase by giving a method for computing the lifted phase that makes explicit use of the symmetry at hand.  It would be interesting to know whether this could be generalized beyond the case of $Bl_{p}\mathbb{P}^n$.

 \section{Relationship to Bridgeland Stability}\label{sec: Brid}
 
 We would like to compare the algebraic obstructions discussed in the previous section and those of Bridgeland stability. We now recall the definition of a Bridgeland stability condition, focusing specifically on the case of interest for the $B$-model of mirror symmetry, so that the triangulated category is $D^{b}Coh(X)$.
\begin{defn}\label{defn: slicing}
A {\em slicing} $\cP$ of $D^{b}Coh(X)$ is a collection of subcategories $\cP(\varphi) \subset D^{b}Coh(X)$ for all $\varphi \in \mathbb{R}$ such that
\begin{enumerate}
\item $\cP(\varphi)[1] = \cP(\varphi+1)$ where $[1]$ denotes the ``shift" functor,
\item if $\varphi_1 > \varphi_2$ and $A\in \cP(\varphi_1)$, $B \in \cP(\varphi_2)$, then ${\rm Hom}(A,B) =0$,
\item every $E\in D^{b}Coh(X)$ admits a Harder-Narasimhan filtration by objects in $\cP(\phi_i)$ for some $1 \leq i \leq m$.
\end{enumerate}
\end{defn}

We refer to \cite{Br}, or Proposition~\ref{prop: BrStab} below, for a precise definition of the Harder-Narasimhan property.  A Bridgeland stability condition on $D^{b}Coh(X)$ consists of a slicing together with a {\em central charge}.  For BPS $D$-branes in the B-model, the relevant central charge was first proposed by Douglas (see, for example, \cite{Doug, DFR, BMT, AB}).  We take
\[
D^{b}Coh(X) \ni E \longmapsto Z_{D}(E):= -\int_{X}e^{-\sqrt{-1}\omega}ch(E).
\]
Many authors also consider the central charge
\[
D^{b}Coh(X) \ni E \longmapsto Z'_{D}(E):= -\int_{X}e^{-\sqrt{-1}\omega}ch(E)\sqrt{Td(X)}.
\]
This latter central charge seems to be unrelated to the dHYM equation in general, so we will not consider it.
\begin{defn}\label{defn: BrStab}
A Bridgeland stability condition on $D^{b}Coh(X)$ with central charge $Z_{D}$ is a slicing $\cP$ satisfying the following properties
\begin{enumerate}
\item For any non-zero $E\in \cP(\varphi)$ we have
\[
Z_{D}(E) \in \mathbb{R}_{>0} e^{\sqrt{-1}\varphi},
\]
\item
\[
C := \inf \left\{ \frac{|Z_{D}(E)|}{\|ch(E)\|} : 0 \ne E \in \cP(\varphi), \varphi \in \mathbb{R} \right\} >0
\]
where $\| \cdot \|$ is any norm on the finite dimensional vector space $H^{even}(X, \mathbb{R})$.
\end{enumerate}
\end{defn}

Given a Bridgeland stability condition the {\em heart} is defined to be $\mathcal{A} := \cP((0,\pi])$, and this is itself an abelian category.  An object $A \in \mathcal{A}$ is semistable (resp. stable) if, for every surjection $A\twoheadrightarrow B$, $B\in\mathcal{A}$ we have
\[
 \varphi(A) \leq (\text { resp.} <)\,\,\varphi(B).  
\]

Comparing this definition with the obstructions discussed in Section~\ref{sec: Alg} it is seems that, at least aesthetically, the algebraic structures which predict the existence or non-existence of solutions to dHYM are closely related to Bridgeland stability.  For example, in dimension $3$, if $V$ is an irreducible analytic subvariety and $\mathcal{O}_{V}$ is the skyscraper sheaf supported on $V$, then the ideal dictionary would be
\[
\begin{aligned}
\text{ Proposition~\ref{prop: BridgStabInt} (i)-(iii) } &\Longleftrightarrow L, L\otimes \mathcal{O}_{V} \in \mathcal{A}\\
\text{ Proposition~\ref{prop: BridgStabInt} (iv) } &\Longleftrightarrow L \text{ is not destabilized by } L\twoheadrightarrow L\otimes \mathcal{O}_{V}
\end{aligned}
\]
Note however that that $Z_{V}(L)\ne Z_{D}(L\otimes \mathcal{O}_{V})$ in general.  Solutions of the dHYM equation have two more important similarities with Bridgeland stable objects.  First, by a result of Jacob-Yau \cite{JY}, line bundles admitting solutions of dHYM have property (2) of Definition~\ref{defn: BrStab}.  Secondly, the following lemma was proved in \cite{CY18}; it should be compared with Definition~\ref{defn: slicing}, (2).

\begin{lem}[C.-Yau, \cite{CY18}]
Suppose $L_1, L_2$ are two line bundles on $(X,\omega)$ admitting solutions of the deformed Hermitian-Yang-Mills equation with
\[ 
0<\varphi_{X}(L_2)<\varphi_{X}(L_1) < \pi.
\]
Then ${\rm Hom}(L,M) = 0$.
\end{lem}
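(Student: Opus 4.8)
The plan is to prove the vanishing $\mathrm{Hom}(L_1, L_2) = 0$ (note the statement should read $L_1, L_2$ in place of the typo'd $L, M$) by exploiting the hypothesis on the slicing angles $0 < \varphi_X(L_2) < \varphi_X(L_1) < \pi$ together with the structure of line bundles on a K\"ahler manifold. First I would observe that any nonzero morphism $f : L_1 \to L_2$ of holomorphic line bundles is given by a section of $L_1^{-1} \otimes L_2 = \mathrm{Hom}(L_1, L_2)$, hence by a global holomorphic section $s \in H^0(X, L_2 \otimes L_1^{-1})$. Since $X$ is compact and connected and a line bundle morphism is either zero or generically an isomorphism, such an $s$ either vanishes identically or is nowhere-zero only if $L_1 \cong L_2$; in general its vanishing locus is an effective divisor $D$ in the class $c_1(L_2) - c_1(L_1)$. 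The existence of a nonzero morphism therefore forces the class $c_1(L_2) - c_1(L_1)$ to be represented by an effective divisor, equivalently pseudo-effective in a strong sense.

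The heart of the argument is to show that the inequality $\varphi_X(L_2) < \varphi_X(L_1)$ is incompatible with effectivity of $c_1(L_2)-c_1(L_1)$, using the obstructions recorded in Corollary~\ref{cor: obstructions} and Proposition~\ref{prop: BridgStabInt}. The key mechanism is that the slicing angle $\varphi_X$ is \emph{monotone} with respect to effectivity: intuitively, moving in an effective direction in $H^{1,1}(X,\mathbb{R})$ should increase (not decrease) the slicing angle, because the central charges $Z_X(L_i) = -\int_X e^{-\sqrt{-1}\omega}\mathrm{ch}(L_i)$ rotate counterclockwise as one adds positive curvature. Concretely, I would write $L_2 = L_1 \otimes \mathcal{O}_X(D)$ with $D$ effective, and analyze $\mathrm{Im}\!\left(\overline{Z_X(L_1)}\, Z_X(L_2)\right)$, whose sign records whether $\varphi_X(L_2) < \varphi_X(L_1)$. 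Expanding $Z_X(L_2)$ in terms of $Z_X(L_1)$ and the contribution of $D$, and applying the positivity statements (the analogues of $\mathrm{Im}(Z_V(L)) > 0$ and $\mathrm{Im}(Z_V/Z_X) > 0$ from Corollary~\ref{cor: obstructions}, applied to the curves and divisors carved out by $D$), should force $\varphi_X(L_2) \geq \varphi_X(L_1)$, contradicting the hypothesis.

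The main obstacle I anticipate is making the ``monotonicity of the slicing angle under effective perturbations'' fully rigorous, since the slicing angle is a winding number of the path $Z_{X,[\alpha]}(t)$ from Definition~\ref{defn: algLiftAngle} rather than a simple argument, and the subtraction of central charges mixes contributions from all dimensions of $V \subset \mathrm{supp}(D)$. In particular one must ensure that the path $Z_{X, c_1(L_2)}(t)$ never crosses the origin (so that $\varphi_X(L_2)$ is well-defined, which is guaranteed here because both bundles admit dHYM solutions, via Corollary~\ref{cor: algLift}) and that the winding angles genuinely compare in the claimed direction. I expect the cleanest route is to localize the comparison to the subvariety $V = \mathrm{supp}(D)$: since $s$ vanishes on $V$, the skyscraper-type quotient $L_1 \twoheadrightarrow L_1 \otimes \mathcal{O}_V$ plays the destabilizing role indicated in the dictionary preceding the lemma, and Proposition~\ref{prop: BridgStabInt}(iv) (the inequality $\varphi_V(L_1) > \varphi_X(L_1)$) combined with $\varphi_X(L_2) < \varphi_X(L_1)$ yields the contradiction directly. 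Turning this dictionary into an actual sign computation is the step requiring the most care.
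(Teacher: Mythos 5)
Your overall strategy is genuinely different from the proof in \cite{CY18}, and the step you yourself flag as ``requiring the most care'' is a real gap, not a technicality. Your plan rests on a seesaw decomposition: writing $0\to L_1\to L_2\to L_2\otimes\mathcal{O}_D\to 0$ for the effective divisor $D$ cut out by the section, you have $Z_X(L_2)=Z_X(L_1)+Z(L_2\otimes\mathcal{O}_D)$, and you need the phase of the quotient term to exceed $\varphi_X(L_1)$ in order to force $\varphi_X(L_2)\geq\varphi_X(L_1)$. But the positivity statements you invoke --- Corollary~\ref{cor: obstructions} and Proposition~\ref{prop: BridgStabInt} --- concern the quantities $Z_V(L)=-\int_V e^{-\sqrt{-1}\omega}\,ch(L)$, and the survey explicitly warns that $Z_V(L)\ne Z_D(L\otimes\mathcal{O}_V)$ in general: the central charge of the actual quotient sheaf mixes contributions from $ch(L_2)(1-e^{-D})$ in all codimensions and is not controlled by the subvariety-restricted charges. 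Asserting the needed phase inequality for $Z(L_2\otimes\mathcal{O}_D)$ amounts to assuming a form of Bridgeland stability of $L_2$ with respect to the sheaf-theoretic central charge, which is essentially the hard direction of Conjecture~\ref{conj: newTY} --- so the argument is circular at its crux. A second mismatch of hypotheses compounds this: Proposition~\ref{prop: BridgStabInt} is proved only in dimension $3$ and under the hypercritical-type assumption $\hat{\theta}\in(\pi,\tfrac{3\pi}{2})$, while the lemma holds in all dimensions assuming only $0<\varphi_X(L_2)<\varphi_X(L_1)<\pi$, so your toolkit does not even apply in the lemma's generality. (Your opening reduction to an effective divisor is fine, modulo the garbled sentence about nowhere-vanishing sections; $D=0$ forces $L_1\cong L_2$, which contradicts the strict phase inequality.)

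The actual proof in \cite{CY18} is much more elementary and purely analytic: it uses the dHYM metrics themselves rather than any effectivity or intersection-theoretic input. Given $0\ne s\in H^0(X, L_1^{\vee}\otimes L_2)$, equip $L_1^{\vee}\otimes L_2$ with the metric $h_1^{-1}\otimes h_2$ induced by the dHYM metrics, and let $\alpha_i$ denote the normalized curvature forms, so that $\Theta_{\omega}(\alpha_i)\equiv\hat{\theta}_i$ identically on $X$. Away from the zero divisor of $s$ one has $\sqrt{-1}\,\partial\overline{\partial}\log|s|^2 = \alpha_1-\alpha_2$, and since $\log|s|^2\to-\infty$ along the zeros, the maximum of $|s|^2$ is attained at a point $p$ where $s\ne 0$; there $\sqrt{-1}\,\partial\overline{\partial}\log|s|^2\le 0$, i.e.\ $\alpha_1\le\alpha_2$ as real $(1,1)$-forms at $p$. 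Because $\lambda\mapsto\arctan(\lambda)$ is increasing, the Lagrangian phase operator is monotone under this partial order, so $\hat{\theta}_1=\Theta_{\omega}(\alpha_1)(p)\le\Theta_{\omega}(\alpha_2)(p)=\hat{\theta}_2$, i.e.\ $\varphi_X(L_1)\le\varphi_X(L_2)$, contradicting the strict hypothesis. This is the dHYM analogue of the classical maximum-principle proof that there are no nonzero maps between line bundles with Hermitian--Einstein metrics of decreasing slope, and it is exactly the mechanism your algebraic route tries, unsuccessfully, to reproduce at the level of central charges.
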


The remainder of this section is devoted to understanding the correspondence between solutions of the deformed Hermitian-Yang-Mills equation and Bridgeland stable objects in a particular example; the case of $Bl_{p}\mathbb{P}^2$.    All the results presented in the rest of this section were obtained in \cite{AM16} and some of the discussion applies more generally to projective surfaces of Picard rank $2$.  We first recall some basic facts about Bridgeland stability condition on surfaces. The following proposition is proved in \cite{Br}, and it is a very useful description when one wants to construct a Bridgeland stability condition on a smooth variety.
\begin{prop}[Bridgeland, \cite{Br}]\label{prop: BrStab}
	\label{pro1}
	A Bridgeland stability condition on $D^bCoh(X)$ is equivalent to the following data: the heart $\mathcal{A}$ of a bounded t-structure on $D^bCoh(X)$, and a central charge $Z: K(\mathcal{A})\rightarrow \mathbb{C}$ such that for every nonzero object $E\in \mathcal{A}$, one has (i) $Z(E)\in |Z(E)|e^{\sqrt{-1}\pi\phi}$ for $\phi\in (0, 1]$, (ii) $E$ has a finite filtration
	\begin{equation*}
	0=E_0\subset E_1\subset...\subset E_{n-1}\subset E_n=E
	\end{equation*} 
	such that $HN_i(E)=E_i/E_{i-1}$'s are semistable objects in $\mathcal{A}$ with decreasing phase $\phi$. Furthermore, the central charge satisfies Definition \ref{defn: BrStab} (2).
\end{prop}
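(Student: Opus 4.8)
The plan is to establish the asserted equivalence by constructing mutually inverse passages between the two packages of data, following Bridgeland's argument \cite{Br}. In one direction I start from a stability condition in the sense of Definition~\ref{defn: slicing} and Definition~\ref{defn: BrStab}, namely a slicing $\cP$ together with the central charge $Z$, and produce a heart; in the other I start from a heart $\mathcal{A}$ with a central charge satisfying (i), (ii) and the support property, and reconstruct the slicing. The support property (Definition~\ref{defn: BrStab} (2)) is carried along unchanged in both passages, so the real content lies in matching the slicing data with the $t$-structure data.

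For the passage from a slicing to a heart, I set $\mathcal{A} := \cP((0,1])$ and verify that it is the heart of a bounded $t$-structure. The natural candidate aisles are $D^{\leq 0} := \cP((0,\infty))$ and $D^{\geq 0} := \cP((-\infty,1])$; the orthogonality axiom of the slicing gives ${\rm Hom}(D^{\leq 0}, D^{\geq 0}[-1]) = 0$, while the Harder--Narasimhan axiom supplies the truncation triangles, and the existence of only finitely many semistable factors with bounded phases yields boundedness. With $\mathcal{A}$ in hand, property (i) for a nonzero $E \in \mathcal{A}$ follows by writing $Z(E)$ as the sum of the central charges of its semistable factors, each lying in $\mathbb{R}_{>0}e^{\sqrt{-1}\pi\phi_j}$ with $\phi_j \in (0,1]$; such a sum cannot vanish and has phase in $(0,1]$. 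Property (ii) is just the restriction of the slicing's Harder--Narasimhan filtration to objects of $\mathcal{A}$, and the support property is inherited verbatim.

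For the reverse passage I define $\cP(\phi)$, for $\phi \in (0,1]$, to be the $Z$-semistable objects of $\mathcal{A}$ of phase $\phi$, and extend to all real $\phi$ by declaring $\cP(\phi+1) := \cP(\phi)[1]$, so that axiom (1) of Definition~\ref{defn: slicing} holds by fiat. The global Harder--Narasimhan axiom (3) is assembled from hypothesis (ii): an arbitrary $E \in D^b{\rm Coh}(X)$ is filtered by the shifts $H^i(E)[-i]$ of its $t$-structure cohomology, each $H^i(E) \in \mathcal{A}$ is refined by its in-heart filtration from (ii), and since the factors coming from distinct cohomological degrees occupy disjoint phase intervals $(-i,-i+1]$ they can be arranged in strictly decreasing order of phase. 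The main obstacle is the orthogonality axiom (2): I must show that ${\rm Hom}(A,B) = 0$ whenever $A \in \cP(\phi_1)$, $B \in \cP(\phi_2)$ and $\phi_1 > \phi_2$. After reducing by shifts to the situation where both objects lie in $\mathcal{A}$ (using that $\mathcal{A}$ is a heart, so ${\rm Hom}(\mathcal{A}, \mathcal{A}[n])$ vanishes for $n<0$), a nonzero morphism $A \to B$ would have image whose phase is at most $\phi(A)$, being a quotient of the semistable $A$, and at least $\phi(B)$, being a subobject of the semistable $B$, forcing $\phi_1 \leq \phi_2$ and contradicting $\phi_1 > \phi_2$. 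Checking that these phase comparisons are legitimate across all shifts and in the boundary case $\phi = 1$ is the delicate point where the semistability definitions must be handled with care; once it is settled, one verifies directly that the two passages are inverse to one another, and that the support property transported through them is exactly Definition~\ref{defn: BrStab} (2).
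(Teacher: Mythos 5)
The paper gives no proof of this proposition at all --- it is stated with a citation to \cite{Br} --- so the comparison is really with Bridgeland's original argument (Proposition 5.3 of \cite{Br}), and your two passages are exactly his: slicing $\mapsto$ heart via $\mathcal{A}=\cP((0,1])$, with property (i) from additivity of $Z$ over HN factors (the sum of nonzero vectors with arguments in $(0,\pi]$ is nonzero with argument in $(0,\pi]$) and property (ii) from restricting the HN filtration; heart $\mapsto$ slicing via $\cP(\phi+1)=\cP(\phi)[1]$ and concatenating the $t$-cohomology tower of an arbitrary object with the in-heart filtrations from (ii), using disjointness of the phase windows $(-i,-i+1]$. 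Your reduction of the orthogonality axiom is also correctly organized: if $A\in\mathcal{A}[m]$, $B\in\mathcal{A}[n]$ and $\phi_1>\phi_2$, then necessarily $m\geq n$, the case $m>n$ is killed by $\mathrm{Hom}(\mathcal{A},\mathcal{A}[k])=0$ for $k<0$, and only the in-heart case $m=n$ remains. The support property transports unchanged since $ch(E[1])=-ch(E)$ leaves both $|Z|$ and $\|ch\|$ invariant, as you say.

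However, in the one step you carry out in detail --- the in-heart orthogonality --- your inequalities are reversed, and as literally written the argument yields no contradiction. Under the paper's convention (an object $A\in\mathcal{A}$ is semistable if every surjection $A\twoheadrightarrow B$ in $\mathcal{A}$ satisfies $\varphi(A)\leq\varphi(B)$, equivalently every nonzero subobject has phase $\leq\varphi(A)$), the image $I$ of a nonzero map $A\to B$ satisfies $\varphi(I)\geq\varphi(A)=\phi_1$, being a \emph{quotient} of the semistable $A$, and $\varphi(I)\leq\varphi(B)=\phi_2$, being a \emph{subobject} of the semistable $B$; this forces $\phi_1\leq\phi_2$, the desired contradiction. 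You asserted the opposite bounds (``at most $\varphi(A)$ \dots at least $\varphi(B)$''), which give only $\phi_2\leq\phi_1$ --- perfectly consistent with $\phi_1>\phi_2$. Since the conclusion you then state is the correct one, this is a sign slip rather than a structural flaw, but it occurs precisely at the point you flag as delicate, so it must be corrected; you should also record the seesaw property $\min(\varphi(A'),\varphi(A''))\leq\varphi(A)\leq\max(\varphi(A'),\varphi(A''))$ for short exact sequences in $\mathcal{A}$ (immediate from additivity of $Z$), which is what lets you pass between the subobject and quotient formulations of semistability used in this step.
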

This filtration is called the Harder-Narasimhan filtration, and the $HN_i$'s are called the HN factors of $E$. Similar to the case of slope/Gieseker stability condition, the existence of such a filtration implies that any object in the abelian category $\mathcal{A}$ can be built up by extensions of semistable objects. For the rest of this note, we will always use the above notations for Harder-Narasimhan filtration and HN factors. 

If a stability condition is given by the above data, we denote the stability condition by 
\begin{equation*}
\sigma=(\mathcal{A}, Z).
\end{equation*} 

Usually one would like the central charge to factor through a finite dimensional vector space $K_{num}=K(\mathcal{A})/\chi(\_, \_)$, where the Euler form $\chi$ is defined by
$$
\chi(E, F)=\sum_i(-1)^idim(Hom(E, F[i])).
$$

For surfaces, a subset of the set of Bridgeland stability conditions which satisfy the above requirements has been constructed, see \cite{AB, BM11, Bri08}. We review this subset of Bridgeland stability conditions here.

Let $\nu$ be an ample class in $NS_\mathbb{R}(X)$. Let $B$ be any class in $NS_\mathbb{R}(X)$. We denote the $B$-twisted Chern character by $ch^B(E)=e^{-B}\cdot ch(E)$.
Consider the central charge 
\begin{equation*}
Z_{\nu, B}(E)=-\int_Xe^{-\sqrt{-1}\nu}\cdot ch^B(E).
\end{equation*}
For any coherent sheaf $E$ on $X$, we denote the $B$-twisted slope of $E$ by 
\begin{equation*}
\mu_\nu^B(E)=\frac{ch_1^B(E)\cdot\nu}{ch_0^B(E)\cdot \nu^2},
\end{equation*}
and the slope with no $B$-twist by $\mu_\nu(E)$. 

Let $Coh^B(X)$ be the abelian category in $D^bCoh(X)$ obtained by tilting the abelian category $Coh(X)$ at the following torsion pair:

\begin{center}
$\mathcal{T}^B_\nu$=$\{E\in Coh(X)|$ any semistable factor $F$ of $E$ satisfies $\mu_\nu^B(F)>0\}$, 
\end{center}
\begin{center}
$\mathcal{F}^B_\nu$=$\{E\in Coh(X)|$ any semistable factor $F$ of $E$ satisfies $\mu_\nu^B(F)\leq 0\}$, 
\end{center}

Equivalently, objects in $Coh^B(X)$ are given by
\begin{equation*}
\{E\in D^bCoh(X)| H^0(E)\in \mathcal{T}^B_\nu, H^{-1}(E)\in \mathcal{F}^B_\nu, H^i(E)=0, i\neq -1, 0\}.
\end{equation*}
The following theorem is proved by many authors in the literature (see, e.g. \cite{ABCH13, BM11, Bri08})
\begin{thm}
	\label{stability}
	The pair $\sigma_{\nu, B}=(Coh^B(X), Z_{\nu, B})$ defines a Bridgeland stability condition on $X$.
\end{thm}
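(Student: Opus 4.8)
The plan is to verify the hypotheses of Proposition~\ref{prop: BrStab}: that $Coh^B(X)$ is the heart of a bounded t-structure, that the central charge $Z_{\nu,B}$ sends every nonzero object of this heart into the locus $\{\,|z|e^{\sqrt{-1}\pi\phi} : \phi\in(0,1]\,\}$, that Harder-Narasimhan filtrations exist, and finally the support property of Definition~\ref{defn: BrStab}~(2). \textbf{Step 1: the tilt is a heart.} First I would check that $(\mathcal{T}^B,\mathcal{F}^B)$ is a torsion pair on $Coh(X)$. Both the vanishing ${\rm Hom}(\mathcal{T}^B,\mathcal{F}^B)=0$ and the decomposition $0\to T\to E\to F\to 0$ with $T\in\mathcal{T}^B$, $F\in\mathcal{F}^B$ follow from classical Mumford-Takemoto theory: every coherent sheaf has a Harder-Narasimhan filtration for the $B$-twisted slope $\mu^B_\nu$, and $\mathcal{T}^B$ (resp. $\mathcal{F}^B$) is generated by the HN factors of positive (resp. non-positive) slope, torsion sheaves being placed in $\mathcal{T}^B$. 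Given the torsion pair, the tilting construction of Happel-Reiten-Smal\o{} produces the heart $Coh^B(X)$, described cohomologically as in the excerpt, of a bounded t-structure.

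\textbf{Step 2: positivity.} Expanding $e^{-\sqrt{-1}\nu}=1-\tfrac{}{}\sqrt{-1}\nu-\tfrac12\nu^2$ on the surface and pairing with $ch^B(E)$ yields
\[
Z_{\nu,B}(E)=\frac{\nu^2}{2}\,ch_0^B(E)-\int_X ch_2^B(E)+\sqrt{-1}\,\bigl(\nu\cdot ch_1^B(E)\bigr),
\]
so that ${\rm Im}\,Z_{\nu,B}(E)=\nu\cdot ch_1^B(E)$. From the defining exact triangle relating $E$ to $H^0(E)\in\mathcal{T}^B$ and $H^{-1}(E)[1]$ with $H^{-1}(E)\in\mathcal{F}^B$, the slope constraints give ${\rm Im}\,Z_{\nu,B}(E)\ge 0$ at once. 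The delicate case is ${\rm Im}\,Z_{\nu,B}(E)=0$: here the torsion-pair constraints force $H^0(E)$ to be a finite-length sheaf (whence ${\rm Re}\,Z<0$ on it) and $H^{-1}(E)$ to be $\mu^B_\nu$-semistable of slope zero. For the latter I would invoke the Hodge index theorem, which gives $(ch_1^B)^2\le 0$ because $\nu\cdot ch_1^B=0$ with $\nu$ ample, together with the Bogomolov inequality $(ch_1^B)^2\ge 2\,ch_0^B\int_X ch_2^B$ for the semistable sheaf $H^{-1}(E)$; combining the two forces $\int_X ch_2^B(H^{-1}(E))\le 0$ and hence ${\rm Re}\,Z_{\nu,B}(E)<0$. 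Thus a nonzero object with ${\rm Im}\,Z=0$ lies on the negative real axis ($\phi=1$), establishing property (i) of Proposition~\ref{prop: BrStab}.

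\textbf{Step 3: HN filtrations and support property.} For the existence of Harder-Narasimhan filtrations I would show that $Coh^B(X)$ is Noetherian and that ${\rm Im}\,Z_{\nu,B}=\nu\cdot ch_1^B$ takes values in a discrete subset of $\mathbb{R}_{\ge 0}$ on its objects, since $ch_1^B$ ranges over a fixed translate of the lattice $NS(X)$; discreteness of the imaginary part rules out infinite chains of subobjects of strictly increasing phase and yields the filtrations via Bridgeland's general criterion. The support property of Definition~\ref{defn: BrStab}~(2) follows once more from Bogomolov: one introduces the discriminant quadratic form $\Delta=ch_1^2-2\,ch_0\int_X ch_2$ on $K_{num}(X)\otimes\mathbb{R}$, verifies that it is non-negative on $\sigma_{\nu,B}$-semistable objects and negative definite on $\ker Z_{\nu,B}$, and extracts the uniform lower bound $C>0$.

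The hard part will be the positivity in the boundary case ${\rm Im}\,Z=0$ together with the support property; both rest on having the Bogomolov inequality available for the relevant semistable objects, and it is precisely this inequality, sharpened by the Hodge index theorem, that prevents central-charge vectors from escaping the admissible region. Once Bogomolov and Hodge index are in hand, the torsion-pair bookkeeping of Step 1 and the finiteness argument for the HN property are comparatively routine.
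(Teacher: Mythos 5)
The paper offers no proof of this theorem; it is quoted from the literature with the citations \cite{ABCH13, BM11, Bri08}, and your outline follows exactly the standard argument in those references: Happel--Reiten--Smal\o{} tilting at the slope torsion pair, positivity of $Z_{\nu,B}$ via the Hodge index theorem combined with the (twist-invariant) Bogomolov inequality, and the discriminant form $\Delta = ch_1^2 - 2\,ch_0\,ch_2$ for the support property. Your Steps 1 and 2 are correct as written; one cosmetic point is that in the boundary case ${\rm Im}\,Z_{\nu,B}(E)=0$ the sheaf $H^{-1}(E)$ need not itself be $\mu^B_\nu$-semistable of slope zero --- rather all of its HN factors have slope exactly zero (their slopes are $\leq 0$ and sum appropriately to zero) --- so you should apply Bogomolov and Hodge index factor by factor and use additivity of $ch_2^B$ to conclude ${\rm Re}\,Z_{\nu,B}(E)<0$.

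The genuine gap is in Step 3. The theorem is stated for arbitrary $\nu, B \in NS_{\mathbb{R}}(X)$, and the paper immediately needs this generality, since it works on the full real slice $\sigma_{x,y,z}$ with $(x,y,z)\in \mathbb{R}^{>0}\times\mathbb{R}^2$. For irrational classes your discreteness claim is false: the image of ${\rm Im}\,Z_{\nu,B} = \nu\cdot ch_1^B$ on the heart is contained in $\left\{ \nu\cdot c - r\,(\nu\cdot B) : c\in NS(X),\ r\in\mathbb{Z}\right\}$, which is typically a dense subgroup of $\mathbb{R}$ when $\nu$ or $B$ is irrational. Hence Bridgeland's Noetherian-plus-discreteness criterion settles only the case of rational $\nu, B$. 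To cover the real case one must either run the more delicate direct HN argument (as in the lecture notes of Macr\`i--Schmidt, or Toda's work), which controls $ch_0$ and $\nu\cdot ch_1^B$ along chains of destabilizing subobjects without discreteness, or first prove the support property with respect to $\Delta$ uniformly at rational parameters and then invoke Bridgeland's deformation theorem to produce the stability conditions at irrational parameters, identifying the deformed heart with $Coh^B(X)$. The second route meshes well with what you already set up: your verification that $\Delta$ is negative definite on $\ker Z_{\nu,B}$ (again by Hodge index, exactly as in your boundary computation) is the input the deformation argument needs. But as written, the assertion that ${\rm Im}\,Z_{\nu,B}$ takes discrete values is a step that fails in the generality the theorem asserts and the paper uses.
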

To simplify expression, we use the following notation:
\begin{equation*}
\begin{split}
\rho_{\nu, B}(E)=&-\frac{Re(Z_{\nu, B}(E))}{Im(Z_{\nu, B}(E))}\\
=&\frac{ch_2^B(E)-\frac{1}{2}\nu^2ch_0^B(E)}{\nu\cdot ch_1^B(E)}.
\end{split}
\end{equation*}
For any two objects $E, F\in Coh^B(X)$, the phase of $\sigma_{\nu, B}(E)$ is greater than the phase of $\sigma_{\nu, B}(F)$ if and only if $\rho_{\nu, B}(E)>\rho_{\nu, B}(F)$.  Now we consider the case when $X=Bl_p(\mathbb{P}^2)$, and $\omega$ an ample class in $NS_\mathbb{R}(X)$.  Without loss of generality, we assume that $\omega\cdot\omega=1$. Let $E$ be the exceptional divisor on $X$. Let $H$ be another element in $NS_\mathbb{R}(X)$ such that $H\cdot\omega=0$, $H\cdot H=-1$ \cblue{and $E\cdot H>0$}. We are interested in which line bundles are $\sigma_{\omega, 0}$ stable. Following the idea of \cite{ABCH13}, we consider a real slice of the stability manifold which contains $\sigma_{\omega, 0}$, and further analyze the geometry of potential walls in this real slice to obtain useful restrictions on destabilizing objects. 

Consider a subset of the stability conditions constructed in Theorem \ref{stability} by taking $\nu=x\omega$, and $B=y\omega+zH$ for $x, y, z\in \mathbb{R}, x>0$. We denote a stability condition in this slice by $\sigma_{x, y, z}$. Then $\sigma_{\omega, 0}$ is denoted by
\begin{equation*}
\sigma_{1, 0, 0}=(Coh^0(X), Z_{1, 0, 0}).
\end{equation*}

Consider an arbitrary line bundle $L$ on $Bl_p\mathbb{P}^2$.

\begin{defn}
	A potential wall associated to $L$ is a subset in $\mathbb{R}^{>0}\times\mathbb{R}^2$ consisting of points $(x, y, z)$ such that 
	\begin{equation*}
	ReZ_{x, y, z}(E)ImZ_{x, y, z}(L)=ReZ_{x, y, z}(L)ImZ_{x, y, z}(E), 
	\end{equation*}
	for some object $E$ of $D^bCoh(X)$.
\end{defn}
We denote such a wall by $W(E, L)$. Note that $E$ may not actually destabilize $L$ since $E$ may not be a subobject of $L$ in $Coh^B$.

Using the formula of central charge, we see that the wall is defined by 
\begin{equation}
\label{wall}
(\frac{1}{2}x^2\omega^2ch_0^B(E)-ch_2^B(E))\cdot(x\omega ch_1^B(L))=(\frac{1}{2}x^2\omega^2ch_0^B(L)-ch_2^B(L))\cdot(x\omega ch_1^B(E)).
\end{equation}

Denote the Chern character of $E$ by $(r, c, d)$, where $c=E_\omega \omega+E_H H$.
Similarly, we denote the Chern character of $L$ by $(1, l, e)$, where $l=l_\omega \omega+l_H H$. 
Then the wall $W(E, L)$ defined by equation \ref{wall} is a quadratic surface in $\mathbb{R}^3$:
\begin{equation}
\label{walle}
\begin{split}
\frac{1}{2}x^2(rl_\omega-E_\omega)+\frac{1}{2}y^2(rl_\omega-E_\omega)+\frac{1}{2}z^2(rl_\omega-E_\omega)-yz(rl_H-E_H)\\
+y(d-er)-z(E_Hl_\omega-l_HE_\omega)-(dl_\omega-eE_\omega)=0
\end{split}
\end{equation}

We denote the intersection of $W(E, L)$ and any plane $z=z_0$ by $W(E, L)_{z_0}$. From the equation, we see that $W(E, L)_{z_0}$'s are semicircles if nonempty. We denote the region inside each semicircle by $W(E, L)_{z_0}^<$, and the region outside by $W(E, L)_{z_0}^>$.  Recall the following theorem by Maciocia on the structure of the potential walls:
\begin{thm}[Maciocia, \cite{Mac14}]
	\label{nest}
	Let Y be a smooth projective surface over $\mathbb{C}$, and $F$ a $\mu_\omega$-semistable torsion free sheaf on $Y$. Then the potential walls $W(E, F)_z$ are all semicircles on the upper half plane, furthermore they are nested \cblue{at either side of $y=\mu_\omega(F)$} for any $z\in\mathbb{R}$.
\end{thm}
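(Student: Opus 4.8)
\emph{Plan.} Throughout I would fix $z=z_0$ and regard the slice as the open upper half-plane $\{(y,x):x>0\}$, whose boundary is the real axis $\{x=0\}$. Writing $B=y\omega+zH$ and $\nu=x\omega$, a direct expansion of $Z_{x,y,z}(E)=-\int_X e^{-\sqrt{-1}x\omega}\,ch^B(E)$ using the normalizations $\omega^2=1$, $\omega\cdot H=0$, $H^2=-1$ gives
\begin{equation*}
\mathrm{Re}\,Z_{x,y,z}(E)=\tfrac{x^2}{2}ch_0(E)-ch_2^B(E),\qquad \mathrm{Im}\,Z_{x,y,z}(E)=x\,\big(\omega\cdot ch_1^B(E)\big),
\end{equation*}
so that the wall $W(E,F)$ is the vanishing locus of $\mathrm{Im}\big(Z(E)\overline{Z(F)}\big)=\mathrm{Im}\,Z(E)\,\mathrm{Re}\,Z(F)-\mathrm{Re}\,Z(E)\,\mathrm{Im}\,Z(F)$. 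I would prove the two assertions separately.

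First, the semicircle claim. Since $\mathrm{Re}\,Z$ is even and $\mathrm{Im}\,Z$ is odd in $x$, the wall equation is odd in $x$; dividing by the overall factor $x>0$ leaves a polynomial in $(x^2,y)$, so the wall is symmetric under $x\mapsto-x$ and has no odd powers of $x$. The one computation that must be carried out is that the coefficients of $x^2$ and of $y^2$ in this polynomial coincide --- both equal $\tfrac12\big(ch_0(E)\,\omega\cdot ch_1(F)-ch_0(F)\,\omega\cdot ch_1(E)\big)$ --- and that there is no $xy$ term; this is forced by $\omega^2=1$ and by $\nu$ and $\mathrm{Re}\,B$ being proportional to $\omega$. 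Hence at fixed $z$ the wall is a circle with no linear $x$-term, i.e. one centered on the real axis $\{x=0\}$, and its intersection with $\{x>0\}$ is a semicircle (degenerating to a vertical line when the leading coefficient vanishes).

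Second, the nesting. The key structural observation is that \emph{every} wall $W(E,F)$ passes through the locus $\{Z(F)=0\}$, since there $\mathrm{Im}\big(Z(E)\overline{Z(F)}\big)$ vanishes identically in $E$. Concretely, $\mathrm{Im}\,Z(F)=0$ forces $y=\mu_\omega(F):=\omega\cdot ch_1(F)/ch_0(F)$, and on this vertical line $\mathrm{Re}\,Z(F)=0$ pins the height to a single value $x_F^2$ independent of $E$, which a direct computation identifies as
\begin{equation*}
x_F^2\,ch_0(F)^2=-\Delta(F)-\big(H\cdot ch_1(F)+z\,ch_0(F)\big)^2,\qquad \Delta(F):=ch_1(F)^2-2\,ch_0(F)\,ch_2(F).
\end{equation*}
Here the hypothesis enters: because $F$ is $\mu_\omega$-semistable, the Bogomolov inequality gives $\Delta(F)\ge0$, whence $x_F^2\le0$, so $\{Z(F)=0\}$ does \emph{not} meet the open upper half-plane. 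Thus each wall $W(E,F)$ with $\mu_\omega(E)\neq\mu_\omega(F)$ meets the line $y=\mu_\omega(F)$ only at the off-plane height $x^2=x_F^2\le0$; and since any two of these circles are centered on $\{x=0\}$ with common radical axis $y=\mu_\omega(F)$, they meet only on that line. Therefore distinct walls are pairwise non-crossing in the open upper half-plane, and being semicircles centered on a common axis they form a nested family, the degenerate walls $\mu_\omega(E)=\mu_\omega(F)$ being the limiting vertical line.

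The main obstacle is the second step, and specifically isolating the role of semistability. Non-crossing is \emph{not} a formal fact about numerical walls --- walls built from a fixed second class can and do cross for general $F$. What saves the situation is that the common pinch-locus $\{Z(F)=0\}$ is pushed out of the open upper half-plane precisely when $\Delta(F)\ge0$, i.e. precisely when $F$ is $\mu_\omega$-semistable. Getting this right amounts to (i) recognizing the common-point, coaxial-pencil structure of the family $\{W(E,F)\}$, and (ii) reducing the height of the pinch point to the Bogomolov discriminant $\Delta(F)$ so that semistability applies; the surrounding intersection-theoretic bookkeeping is then routine.
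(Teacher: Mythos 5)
Your computations check out against the paper's own displayed wall equation~\eqref{walle}: expanding $Z_{x,y,z}$ with $\omega^2=1$, $\omega\cdot H=0$, $H^2=-1$ does give $\mathrm{Re}\,Z=\tfrac{x^2}{2}ch_0-ch_2^B$ and $\mathrm{Im}\,Z=x\,\omega\cdot ch_1^B$; after dividing by $x$ the equation is a polynomial in $(x^2,y)$ whose $x^2$ and $y^2$ coefficients both equal $\tfrac12\bigl(ch_0(E)\,\omega\cdot ch_1(F)-ch_0(F)\,\omega\cdot ch_1(E)\bigr)$ (the $y^3$ and $x^2y$ terms cancel in the difference), so each slice is a circle centered on $\{x=0\}$, degenerating to the vertical line when the slopes agree. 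Your pinch-point identity $x_F^2\,ch_0(F)^2=-\Delta(F)-\bigl(H\cdot ch_1(F)+z\,ch_0(F)\bigr)^2$ is also correct (note it specializes, for $F=L$ a line bundle with $\Delta(L)=0$, to the tangency of all walls to $y=l_\omega$ at $(0,l_\omega,l_H)$ observed later in the paper), and the radical-axis argument then rigorously yields pairwise \emph{disjointness} of distinct walls in the open upper half-plane, with semistability entering exactly through Bogomolov, $\Delta(F)\ge0$. Be aware that the paper itself offers no proof --- the theorem is quoted from Maciocia \cite{Mac14} --- but your route (coaxial pencil through the conjugate points over $y=\mu_\omega(F)$, pushed off the real locus by the Bogomolov discriminant) is essentially the argument in the cited source, where it appears as the center--radius relation $(\mathrm{center}-\mu_\omega(F))^2-\mathrm{radius}^2=\mathrm{const}\ge 0$.

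There is, however, one overstated step at the very end: ``disjoint semicircles centered on a common axis form a nested family'' is false as a statement about circles, and your own pencil contains the counterexample. The circles $(y-2)^2+x^2=1$ and $(y+2)^2+x^2=1$ both pass through the conjugate points $(y,x)=(0,\pm\sqrt{-3})$, are coaxial with radical axis $y=0$, and are disjoint --- but neither lies in the interior of the other. In the wall picture this configuration genuinely occurs: walls on opposite sides of the vertical asymptotic wall $y=\mu_\omega(F)$ are disjoint but not mutually nested. The correct conclusion (and the precise form of Maciocia's theorem) is that the walls are pairwise disjoint in the open upper half-plane and totally ordered by inclusion \emph{on each side} of the unique vertical wall, which is their common asymptote; this is all the paper ever uses, since its applications invoke either the contrapositive (crossing walls contradict $\mu_\omega$-semistability) or nesting of walls on a fixed side. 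The fix is two lines: with $\tau^2:=-x_F^2\ge0$, a real member of the pencil with center $(c,0)$ has $R^2=(c-\mu_\omega(F))^2-\tau^2$, so its endpoints $c\pm R$ move monotonically outward as $|c-\mu_\omega(F)|$ grows; combined with your non-crossing statement this gives total ordering by inclusion on each side.
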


Here nested means that given two potential walls, one is in the interior of the other wall or vice visa. 
In particular, if two walls $W(E_1, F)_z$ and $W(E_2, F)_z$ intersect for some $z$, then $F$ cannot be $\mu_\omega$-semistable. 

Note that there is a special wall which is an asymptotic wall of all others. This is a plane defined by $y=l_\omega$. Its intersection with any plane $z=z_0$ can be thought of a semicircle with infinite radius. 

\begin{figure}[h]
	\centering
	\includegraphics[scale=0.6]{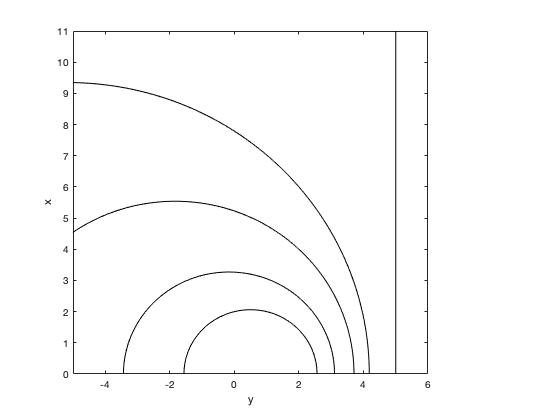}
	\caption{Example of potential walls restricted to a plane $z=z_0$}
\end{figure}

The following proposition is the main idea used in proving  \cite[Proposition 6.2]{ABCH13}. It gives an important tool to break possibly destabilizing objects into sheaves of smaller rank, and thereby reduce considerations to destabilizing objects of rank $1$. This is the strategy we are going to follow.

\begin{prop}[Arcara-Bertram-Coskun-Huizenga, \cite{ABCH13}]
	\label{ABCH}
	Assume that $E$ destabilizes $L$ for a stability condition $\sigma_{x_0, y_0, z_0}$. If the plane $y=\mu_\omega(HN_n(E))$ intersects $W(E, L)_{z_0}$, then $E_{n-1}$ is also a destabilizing object of $L$ at $\sigma_{x_0, y_0, z_0}$. 
\end{prop}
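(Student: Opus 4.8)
The plan is to strip off the bottom Harder--Narasimhan factor of $E$ and show it does not spoil the destabilization, reducing the whole statement to a single phase comparison that is then controlled by the geometry of the walls. First I would take the slope Harder--Narasimhan filtration $0=E_0\subset\cdots\subset E_{n-1}\subset E_n=E$, so that $HN_n(E)=E/E_{n-1}$ is the $\mu_\omega$-semistable factor of least slope, and record the short exact sequence $0\to E_{n-1}\to E\to HN_n(E)\to 0$. Since $E$ destabilizes $L$ as a subobject in the heart $Coh^B$ (with $B=y_0\omega+z_0H$) and may be reduced to being a genuine sheaf, it lies in the torsion part $\mathcal{T}^B$, which forces $\mu_\omega(HN_i(E))>y_0$ for all $i$, and in particular $y_0<\mu_\omega(HN_n(E))$. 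This is exactly what places $E_{n-1}$ and $HN_n(E)$ in $\mathcal{T}^B\subset Coh^B$ and makes the displayed sequence exact in the heart; composing $E_{n-1}\hookrightarrow E\hookrightarrow L$ then exhibits $E_{n-1}$ as a subobject of $L$ in $Coh^B$, and additivity of the central charge gives $Z_{x_0,y_0,z_0}(E)=Z_{x_0,y_0,z_0}(E_{n-1})+Z_{x_0,y_0,z_0}(HN_n(E))$.

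Next I would reduce to one inequality. Because the three central charges lie in the upper half plane and add, the phase $\phi(E)$ lies between $\phi(E_{n-1})$ and $\phi(HN_n(E))$; hence $\phi(E_{n-1})\ge\phi(E)$ if and only if $\phi(HN_n(E))\le\phi(E)$. As $(x_0,y_0,z_0)$ lies on $W(E,L)_{z_0}$ we have $\phi(E)=\phi(L)$, so the claim that $E_{n-1}$ destabilizes $L$ (i.e.\ $\phi(E_{n-1})\ge\phi(L)$) is equivalent to $\phi_{x_0,y_0,z_0}(HN_n(E))\le\phi_{x_0,y_0,z_0}(L)$, i.e.\ to the assertion that $(x_0,y_0)$ lies on the non-destabilizing side of the wall $W(HN_n(E),L)_{z_0}$. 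Moreover a short computation with the sum formula shows that the sign of $\rho_{\nu,B}(HN_n(E))-\rho_{\nu,B}(E)$ equals that of $\rho_{\nu,B}(HN_n(E))-\rho_{\nu,B}(E_{n-1})$, so everything can equally be phrased through the auxiliary wall $W(HN_n(E),E_{n-1})_{z_0}$.

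For the geometric core I would argue by contradiction, assuming $\phi(HN_n(E))>\phi(L)$ at $(x_0,y_0)$. The key computation is ${\rm Im}\,Z_{x,y,z}(HN_n(E))=x\,ch_0(HN_n(E))\,(\mu_\omega(HN_n(E))-y)$, so along the vertical line $y=\mu_\omega(HN_n(E))$ the imaginary part of the central charge of $HN_n(E)$ vanishes and its phase degenerates to an endpoint of $(0,1)$; since $\phi(L)\in(0,1)$, the wall where $\phi(HN_n(E))=\phi(L)$ cannot meet this line for $x>0$, so that wall lies entirely in $\{y<\mu_\omega(HN_n(E))\}$. A leading-order analysis of $\rho_{\nu,B}$ as $x\to\infty$, using that $HN_n(E)$ is the least-slope factor (so $\mu_\omega(HN_n(E))<\mu_\omega(E_{n-1})$), shows that high in the slice $HN_n(E)$ is non-destabilizing, which identifies the destabilizing side as the one separated from the line by the wall. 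Now $L$ is a line bundle and hence $\mu_\omega$-stable, so Theorem~\ref{nest} applies and the relevant walls are nested and do not cross; since $W(E,L)_{z_0}$ passes through the assumed destabilizing point $(x_0,y_0)$, it is forced to nest on the side bounded away from the line $y=\mu_\omega(HN_n(E))$, contradicting the hypothesis that this line meets $W(E,L)_{z_0}$. Therefore $\phi(HN_n(E))\le\phi(L)$, and by the reduction of the previous paragraph $E_{n-1}$ destabilizes $L$.

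The main obstacle is precisely this orientation bookkeeping in the last step: pinning down \emph{which} side of $W(HN_n(E),L)_{z_0}$ is destabilizing and verifying that it is separated from the line $y=\mu_\omega(HN_n(E))$ by the wall, so that the crossing hypothesis genuinely collides with nesting. Doing this cleanly requires the explicit leading behaviour of $\rho_{\nu,B}$ together with a careful comparison of the slopes $\mu_\omega(HN_n(E))$, $\mu_\omega(E_{n-1})$ and $\mu_\omega(L)$; because the direct comparison with $L$ is sign-ambiguous, it is likely cleaner to run the nesting argument against $W(HN_n(E),E_{n-1})_{z_0}$, whose destabilizing side is unambiguously fixed by $\mu_\omega(HN_n(E))<\mu_\omega(E_{n-1})$ and which is coaxial with $W(E,L)_{z_0}$ by additivity of the central charge. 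The remaining, more routine point is the reduction to the case where $E$ is an honest sheaf, needed so that $E\in\mathcal{T}^B$, $y_0<\mu_\omega(HN_n(E))$, and the sequence $0\to E_{n-1}\to E\to HN_n(E)\to 0$ is exact in $Coh^B$; this is exactly where heart membership interacts with the hypothesis, and it is this lemma that drives the inductive reduction of destabilizers to rank one.
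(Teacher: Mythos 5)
There is a genuine gap, and it stems from a misreading of the hypotheses. You assert that ``$(x_0,y_0,z_0)$ lies on $W(E,L)_{z_0}$, so $\phi(E)=\phi(L)$,'' but the hypothesis is only that $E$ \emph{destabilizes} $L$ at $\sigma_{x_0,y_0,z_0}$, i.e.\ $\rho_{x_0,y_0,z_0}(E)>\rho_{x_0,y_0,z_0}(L)$ strictly, so the point lies in the open destabilizing region, not on the wall; the separate hypothesis is that the plane $y=\mu_\omega(HN_n(E))$ meets the wall \emph{somewhere}. This error invalidates both your claimed equivalence (``$E_{n-1}$ destabilizes iff $\phi(HN_n(E))\le\phi(L)$ at the point'') and the final step (``$W(E,L)_{z_0}$ passes through $(x_0,y_0)$, so it nests away from the line''). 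Worse, the statement you try to prove by contradiction --- $\phi_{x_0,y_0,z_0}(HN_n(E))\le\phi_{x_0,y_0,z_0}(L)$ --- is not a consequence of the hypotheses at all: when $W(HN_n(E),L)_{z_0}$ nests strictly \emph{inside} $W(E,L)_{z_0}$, the point $(x_0,y_0)$ can lie in the interiors of both walls, so $HN_n(E)$ may itself destabilize $L$ there while the line still meets $W(E,L)_{z_0}$ without meeting $W(HN_n(E),L)_{z_0}$; no contradiction arises, because the proposition asserts that $E_{n-1}$ destabilizes, not that $HN_n(E)$ fails to. Your seesaw only gives $\rho(E_{n-1})>\rho(E)$ when $\rho(HN_n(E))<\rho(E)$, and at $(x_0,y_0,z_0)$ you have no control over the sign of $\rho(HN_n(E))-\rho(E)$.

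The paper's proof evaluates at the one place where your identity $\phi(E)=\phi(L)$ is actually true: on the wall $W(E,L)_{z_0}$ itself. The intersection hypothesis is used \emph{positively}, to move along $W(E,L)_{z_0}$ toward the ray $y=\mu_\omega(HN_n(E))$; there $\omega\cdot ch_1^B(HN_n(E))\to 0^+$, and the Bogomolov--Gieseker inequality (Theorem \ref{BG}) forces $\rho_{x,y,z}(HN_n(E))\to-\infty$, whence by additivity $\rho(E_{n-1})>\rho(E)=\rho(L)$ at wall points near the line. Maciocia's nesting (Theorem \ref{nest}) then transports the inequality $\rho(E_{n-1})>\rho(L)$ from those wall points back to $(x_0,y_0,z_0)$, since $W(E_{n-1},L)_{z_0}$ and $W(E,L)_{z_0}$ cannot cross. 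Note also that where your argument says the phase of $HN_n(E)$ ``degenerates to an endpoint of $(0,1)$'' on the line, you are implicitly assuming ${\rm Re}\,Z(HN_n(E))$ stays positive there --- that is exactly the Bogomolov--Gieseker input you never invoke (${\rm Im}\,Z=0$ with ${\rm Re}\,Z=0$ would satisfy the wall equation trivially). To repair your write-up, replace the contradiction scheme with the paper's direct limit-along-the-wall argument, and drop the on-wall assumption at $(x_0,y_0,z_0)$.
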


\begin{proof}
	Since $E\in Coh^{y_0}(Y)$, we have $\mu_\omega(HN_n(E))>y_0$. In the plane $z=z_0$, we can approach the ray 
	\begin{equation*}
	y=\mu_\omega(HN_n(E))
	\end{equation*}
	from the left along $W(E, L)_{z_0}$. As $y\to \mu_\omega(HN_n(E))^-$, we know that 
	\begin{equation*}
	\omega\cdot ch_1^B(HN_n(E))\to 0^+.
	\end{equation*}	
	Then we have 
	\begin{equation*}
	\rho_{x, y, z}(HN_n(E))\to -\infty.
	\end{equation*}
	Hence as $y\to \mu_\omega(HN_n(E))^-$, the following is true: 
	\begin{equation*}
	\rho_{x, y, z}(E_{n-1})>\rho_{x, y, z}(E)=\rho_{x, y, z}(L).
	\end{equation*}
	Since $\mu_\omega(HN_n(E))<\mu_\omega(E)<\mu_\omega(E_{n-1})$, the point $(x, y, z_0)\in W(E, L)_{z_0}$ as $y\to\mu_\omega(HN_n(E))^-$ is inside the region $W(E_{n-1}, L)_{z_0}^<$. \cblue{This implies that $W(E_{n-1}, L)_{z_0}\subset W(E, L)_{z_0}^>$}, hence $E_{n-1}$ also destabilizes $L$ at $(x_0, y_0, z_0)$. 
\end{proof}

Now we explore the geometry of $W(E, L)$'s. We first observe that \cblue{for any $E\in D^bCoh(X)$, either} $W(E, L)$ is tangent to the plane $\{y=l_\omega\}$ at the point $(0, l_\omega, l_H)$, or $(0, l_\omega, l_H)$ is a singular point of $W(E, L)$.

This observation implies that the only possible types of quadratic surfaces defined by equation~\eqref{walle} are ellipsoids, elliptic paraboloids and elliptic hyperboloids of two sheets (and its degenerate case, an elliptic cone). Assume further that there is an object $E$ destabilizes $L$ at the stability condition $\sigma_{1, 0, 0}$. Namely there is an exact sequence in $Coh^0(X)$:
\begin{equation}
\label{destabilize}
0\to E\xrightarrow{f} L\xrightarrow{g} F\to 0,
\end{equation} 
and $\rho_{1, 0, 0}(E)>\rho_{1, 0, 0}(L)$. Note that by the long exact sequence on cohomology, $E$ is a coherent sheaf on $X$, and $F$ is quasi-isomorphic to a 2-term complex in $D^bCoh(X)$. 

The point $(1, 0, 0)$ lies in the region $W(E, L)_0^<\cap \{y<HN_n(E)\}$. Then the wall $W(E, L)$ can only be of the following types: 

\begin{itemize}
	\item[{\bf Type 1}] An ellipsoid tangent to the plane $y=l_\omega$ on the left.
	
	\item[{\bf Type 2}] An elliptic paraboloid tangent to the plane $y=l_\omega$ on the left.
	
	\item[{\bf Type 3}] An elliptic hyperboloid of two sheets tangent to the plane $y=l_\omega$ on the left (including the degenerate case: elliptic cone).
	
	\item[{\bf Type 4}] An elliptic hyperboloid of two sheets tangent to the plane $y=l_\omega$ on the right.
\end{itemize}
These situations are illustrated below

\begin{figure}[h]
	\centering
	\begin{minipage}{.5\textwidth}
		\includegraphics[scale=0.35]{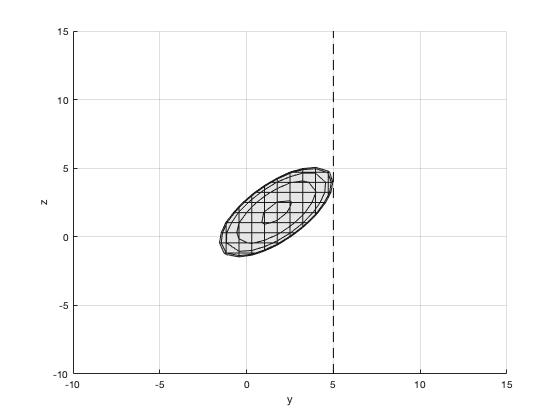}
		\captionof{figure}{Potential wall of type 1}
	\end{minipage}%
	\begin{minipage}{.5\textwidth}
		\centering
		\includegraphics[scale=0.35]{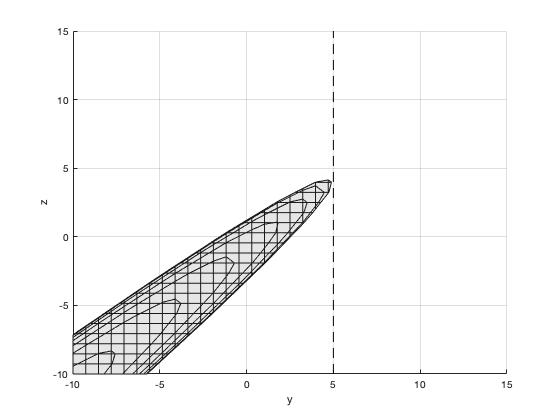}
		\captionof{figure}{Potential wall of type 2}
	\end{minipage}%
\end{figure}

\begin{figure}[h]
	\begin{minipage}{.5\textwidth}
		\centering
		\includegraphics[scale=0.35]{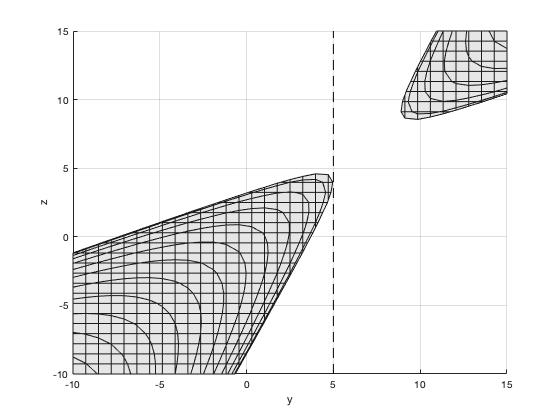}
		\captionof{figure}{Potential wall of type 3}
	\end{minipage}%
	\begin{minipage}{.5\textwidth}
		\centering
		\includegraphics[scale=0.35]{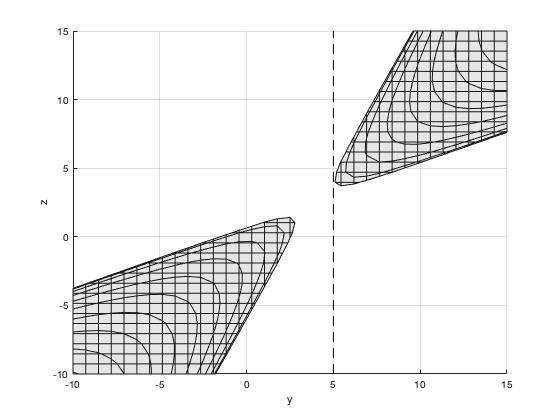}
		\captionof{figure}{Potential wall of type 4}
	\end{minipage}
\end{figure}

\newpage
Using Theorem \ref{ABCH}, we first deduce that
\cblue{
\begin{prop} [Arcara-Miles, \cite{AM16}]
	\label{type123}
	If $E$ destabilizes $L$ at $\sigma_{1, 0, 0}$ and $W(E, L)$ is of type 1, 2 or 3, then $W(E, L)_{0}$ is inside $W(E_i, L)_{0}$ for some $i$ with $W(E_i, L)$ of type 4.
\end{prop}
\begin{proof}
	We have 
	\begin{equation*}
	0<\mu_\omega(HN_n(E))\leq\mu_\omega(E)<l_\omega.
	\end{equation*} 
	Since $W(E, L)$ is of type 1, 2 or 3, the wall $W(E, L)$ intersects the plane $y=\mu_\omega(E)$. By Theorem \ref{nest}, $E$ is not $\mu_\omega$-semistable. 
	WLOG, we assume that the axis of $W(E, L)$ is positive. We denote the range of $z$ such that $W(E, L)_z\cap \{y=\mu_\omega(HN_i(E))\}\neq \emptyset$ by $[a_i, b_i]$. By Proposition \ref{ABCH}, the wall $W(E_{n-1}, L)_z\subset W(E, L)_z^>$ for $z\in [a_n, b_n]$. If $W(E_{n-1}, L)_0\subset W(E, L)_0^<$, then $W(E_{n-1}, L)$ is also of type 1, 2 or 3. Repeat the argument for $E_i$'s inductively. If $W(E_i, L)_0\subset W(E, L)_0^<$ for all $i$, then $W(E_1, L)$ intersects $y=\mu_\omega(E_1)$, contradicts with $E_1$ is $\mu_\omega$ semistable. Hence $W(E, L)_0\subset W(E_i, L)_0^<$ for some $i$. If $W(E_i, L)$ is also of type 1, 2 or 3, we replace $E$ by $E_i$ and find a wall $W(E_j, L)_0\subset W(E_i, L)_0^>$ for some $j<i$. This implies that the wall $W(E_k, L)$ s.t.$W(E_k, L)_0$ is most outside is a wall of type 4. 
\end{proof}
}
%If $W(A_{n-1}, L)$ is also of type 1, 2 or 3, then $W(A_{n-1}, L)_0$ is inside $W(A, L)_0$. Since otherwise $A_{n-1}$ also destabilizes $L$ at $\sigma_{1, 0, 0}$, contradicts with $A$ is a destabilizing object of minimal rank. Hence $W(A_{n-1}, L)_z$ is outside $W(A, L)_z$, for any $a_n<z<l_H$. If all $W(A_i, L)$'s are of type 1, 2 or 3, we can apply this argument inductively, finally we have $W(A_1, L)_z$ is outside $W(A, L)_z$ when $z>a_2$. But this implies that $W(A_1, L)$ intersects $y=\mu_\omega(A_1)$, contradicts with $A_1$ is $\mu_\omega$ semistable. Hence $W(A_i, L)$ is of type 4 for some $i$. Since $W(A_i, L)$ and $W(A, L)$ can only intersect once, $W(A, L)_0$ is inside $W(A_i, L)_0$.
	We first consider possibly destabilizing objects of rank $1$. In this case, we have
	\begin{equation*}
	E\simeq L\otimes I
	\end{equation*}
	for some ideal sheaf $I$. Let $Y$ be the subscheme defined by $I$.
	Let $Q$ be the maximal torsion subsheaf of $\mathcal{O}_Y$. 
	Then we have the following exact sequence of $\mathcal{O}_X$ modules:
	\begin{equation*}
	0\to Q\to \mathcal{O}_Y\to \mathcal{O}_C\to 0,
	\end{equation*}
	where $\mathcal{O}_C$ is a pure sheaf of dimension $1$. We denote $E$ by \cblue{$L(-C)_n$}, where $n$ is the length of $Q$. Now the exact sequence~\eqref{destabilize}
	is actually an exact sequence in $Coh(X)$.  The following Proposition is proved in \cite{AM16}.  
	\begin{prop} [Arcara-Miles, \cite{AM16}]
		\label{rk1}
		Let $C$ be a curve on $X$. If $C^2\geq 0$, then $L(-C)_n$ does not destabilize $L$ at $\sigma_{x_0, y_0, z_0}$ for any $(x_0, y_0, z_0)\in \mathbb{R}^{>0}\times \mathbb{R}^2$. 
	\end{prop}
	\begin{proof}
		We write $C=(C\cdot\omega)\omega-(C\cdot H)H$. Let $\Delta$ be the discriminant of the quadratic equation $W(L(-C)_n, L)|_{x=0}$. Then we have  
		\begin{equation*}
		\Delta=(C\cdot H)^2-(C\cdot\omega)^2,
		\end{equation*}
	        Since $C^2\geq 0$, we have $\Delta\leq 0$. This implies that $W(L(-C)_n, L)$ is an ellipsoid (type 1) or a paraboloid tangent to $y=l_\omega$ on the right (which cannot happen).
		Since $y_0<\mu_\omega(L(-C)_n)$ and $(x_0, y_0, z_0)$ is in the interior of $W(L(-C)_n, L)$, $W(L(-C)_n, L)$ intersects the plane $y=\mu_\omega(L(-C)_n)$, contradicting that $L(-C)_n$ is $\mu$-semistable. 
	\end{proof}
	
Now we consider possibly destabilizing objects of higher rank.  Let $E$ be a suboject of $L$ in $Coh^0$, and $K$ be the kernel of the map $f:E\to L$ in $Coh$. 	
%Let 
%\begin{equation*}
%0=K_0\subset K_1\subset K_2\subset...\subset K_n=K
%\end{equation*}
%be the Harder-Narasimhan filtration of $K$. Then we have exact sequences:
%\begin{equation}
%0\to HN_{i+1}(K)\to E/K_i\to E/K_{i+1}\to 0.
%\end{equation} 
Let $C_E$ be the curve associated to $E/K$ as in the rank $1$ case. If the context is clear, we will write $C$ for $C_E$.

If $C\neq 0$. This gives an exact sequence:
\begin{equation*}
0\to E/K\to L(-C)\to Q\to 0,
\end{equation*} 
and the map $E/K\to L$ factors through $L(-C)$. 

\begin{lem} [Arcara-Miles, \cite{AM16}]
	\label{actualwall}
	Let $\sigma_{x_0, y_0, z_0}$ be a stability condition. Let $E$ be an object which destabilizes $L$ at $\sigma_{x_0, y_0, z_0}$. Assuming that for any $F\subset L$ in $Coh^{y_0}$ with $rk(F)<rk(E)$, $W(F, L)_{z_0}\subset W(E, L)_{z_0}^<$. Then $W(E, L)_z\subset W(L(-C), L)_z^<$ for $|z|\gg 0$. 	
\end{lem}
\begin{proof}
	By Proposition~\ref{type123} we know that $W(E, L)$ is of type 4. WLOG we assume that the axis of $W(E, L)$ is positive. Since $\mu_\omega(HN_i(K))<y_0$, we have
	\begin{equation*}
	W(E, L)_z\cap \{y=\mu_\omega(HN_i(K))\}\neq\emptyset
	\end{equation*}
	for some $z$. Denote the range of such $z$ by $[a_i, b_i]$. By a similar argument of Proposition \ref{ABCH}, we see that the wall $W(E/K_1, L)_z\subset W(E, L)_z^>$ for $z\in [a_1, b_1]$. By the assumption, we have $W(E/K_1, L)_{z_0}\subset W(E, L)_{z_0}^<$. Since $W(E, L)$ and $W(E/K_1, L)$ only intersect once, the wall $W(E/K_1, L)$ can only be of type 4. Hence $W(E/K_1, L)_z\subset W(E, L)_z^>$ for all $z<b_1$. The same argument applies to $E/K_i$ and $E/K_{i+1}$ for all $i$. Finally we have $W(E/K, L)_z$ is outside $W(E, L)_z$ for all $z<b_n$. Note that $W(E/K, L)_z\subset W(L(-C), L)_z^<$ when $y<L(-C)_\omega$. Then $W(L(-C), L)_z$ is outside $W(E, L)_z$ for all $z\ll 0$. 
\end{proof}
Given a stability condition $\sigma_{x_0, y_0, z_0}$, we denote the set of sheaves satisfies the condition in Lemma~\ref{actualwall} by $U^L_{x_0, y_0, z_0}$. 
\begin{cor}
	\label{corC0}
	Let $\sigma_{x_0, y_0, z_0}$ be a stability condition such that $E\subset L$ in $Coh^{y_0}$. If $C=0$ or $C^2\geq 0$ then $E\notin U^L_{x_0, y_0, z_0}$.
\end{cor}
The following Proposition is a weaker version of Proposition 5.10 and Proposition 5.12 in \cite{AM16}, and the proof uses many ingredients from \cite{AM16}. 
\begin{prop}
	\label{type4}
If $E\in U^L_{1, 0, 0}$, then $E$ is of rank $1$, i.e. $E=L(-C_E)_n$.
\end{prop}
\begin{proof}
	 We argue by contradiction, assume that $rk(E)\geq 2$. We omit the subscript $E$ in $C_E$.
	 First consider if $C\cdot H> 0$. 
	 Based on the same analysis of the geometry of $W(E, L)$'s, we have that the wall $W(L(-C), L)$ is also tangent to the plane $y=L(-C)_\omega$ at the point 
	 \begin{equation*}
	 (0, L(-C)_\omega, L(-C)_H)=(0, l_\omega-C\cdot\omega, l_H+C\cdot H).
	 \end{equation*} 
	 By Lemma~\ref{actualwall}, $W(E, L)\subset W(L(-C), L)^<$ for $y\ll0$. We have $W(L(-C), L)$ is also of type 4, and the slope of the axes of $W(E, L)$ and $W(L(-C), L)$ are both negative. Denoting the plane where $W(L(-C), L)$ and $W(E, L)$ intersect by $z=z_0$, where $z_0>0$.
	 
	  Let $z=z_E$ be the horizontal tangent plane of the left branch of $W(E, L)$, and let $z=z_C$ be the horizontal tangent plane of the left branch of $W(L(-C), L)$. Since $W(E, L)$ and $W(L(-C), L)$ already intersect at $z=z_0$, we have $z_C>z_E$. Then the layout of $W(E, L(-C))_z$ is as follows:\\
	 \begin{itemize}
	 \item[(i)] When $z_C<z<z_0$, $W(E, L(-C))_z\subset W(E, L)_z^>$ when $y<\mu_\omega(E)$, $W(E, L(-C))_z\subset W(E, L)_z^<\cap W(L(-C), L)_z^>$ when $y>\mu_\omega(E)$;
	 \item[(ii)] When $z>z_0$, $W(E, L(-C))_z\subset W(E, L)_z^<$ when $y<\mu_\omega(E)$, and $W(E, L(-C))_z\subset W(E, L)_z^>\cap W(L(-C), L)_z^<$ when $y>\mu_\omega(E)$.
	 \end{itemize}
	 Assuming that $L(-C)_H=l_H+C\cdot H\leq z_0$. If 
	 \begin{equation*}
	 W(E, L)_{L(-C)_H}^<\cap \{y<\mu_\omega(HN_n(E))\}\neq \emptyset,
	 \end{equation*}
	 there exist points $(x, y, L(-C)_H)$ in the region such that
	 \begin{equation*}
	 \rho_{x, y, L(-C)_H}(E)>\rho_{x, y, L(-C)_H}(L(-C)),
	 \end{equation*} 
	 which contradicts the fact that $L(-C)$ is $\sigma_{x, y, L(-C)_H}$ stable. 
	 If not, then $L(-C)_H<0$ and $W(E, L)_z$ intersects $y=\mu_\omega(HN_n(E))$ at some $z$ in the range $L(-C)_H<z<0$. Inside this range, we have $W(E_{n-1}, L)_z\subset W(E, L)_z^>$. By the assumption, we have $W(E_{n-1}, L)_0\subset W(E, L)_0^<$. Hence $W(E_{n-1}, L)_{L(-C)_H}\subset W(E, L)_{L(-C)_H}^>$.
	 If 
	 \begin{equation*}
	 W(E_{n-1}, L)_{L(-C)_H}^<\cap\{y<\mu_\omega(HN_{n-1}(E))\}\neq\emptyset,
	 \end{equation*} 
	 then $E_{n-1}$ destabilizes $L(-C)$ at some $\sigma_{x, y, L(-C)_H}$ which leads to a contradiction as before. If not, we continue to consider $E_{n-2}$. Assuming that $W(E_i, L)_{L(-C)_H}^<\cap\{y<\mu_\omega(HN_i(E))\}=\emptyset$ for all $i>2$. Then if $\mu_\omega(E_1)<l_\omega$, $W(E_1, L)\cap \{y=\mu_\omega(E_1)\}\neq \emptyset$. This contradicts that $E_1$ is $\mu$-semistable. If $\mu_\omega(E_1)=l_\omega$, then $\rho_{x, y, z}(E_1)\leq \rho_{x, y, z}(L)$ for any $y<\mu_\omega(E_1)$, which contradict with $\rho_{x, y, z}(E_1)>\rho_{x, y, z}(L)$ for some $(x, y, z)\in W(E_1, L)^<$. 
	 
	 Now consider if $L(-C)_H=l_H+C\cdot H\geq z_0$. Note that $W(E(C), L)$ is obtained by translating $W(E, L(-C))$. We have $E(C)$ destabilizes $L$ at some $\sigma_{x, y, -C\cdot H}\in W(E(C), L)_{-C\cdot H}^<$. By Corollary \ref{corC0}, we have $E(C)\notin U^L_{x, y, -C\cdot H}$. Take $F\in U^L_{x, y, -C\cdot H}$. Then the center of $W(L(-C_F), L)$ is negative as $z\ll 0$. This implies that $C_F\cdot H<0$, hence $C_F^2\geq 0$ by Lemma~\ref{curve}. This contradicts with Corollary~\ref{corC0}.

	 We are left with considering when $C\cdot H<0$. By Lemma~\ref{curve} below, we have $C^2\geq 0$. This again contradicts with Corollary~\ref{corC0}. 
\end{proof}

\begin{lem} (Arcara-Miles, \cite{AM16})
	\label{curve}
	Let $C$ be an effective curve on $X$, if $C\cdot H<0$ then $C^2\geq 0$. 
\end{lem}
Finally we prove the main theorem.
\begin{thm}
	If $L$ satisfies the algebraic obstruction in Lemma \ref{lem: conjDim2}, then $L$ is $\sigma_{1, 0, 0}$-stable. 
\end{thm}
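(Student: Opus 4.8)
The plan is to feed the structural reduction already in place—Propositions~\ref{type123} and~\ref{type4}, which together show that any object destabilizing $L$ at $\sigma_{1,0,0}$ must have rank one, hence be of the form $E = L(-C)\otimes I_n$ for an effective curve $C$ and the ideal sheaf $I_n$ of a length-$n$ subscheme—into a direct numerical comparison of the Bridgeland destabilizing inequality against the dHYM obstruction of Lemma~\ref{lem: conjDim2}. Since Proposition~\ref{rk1} already excludes destabilizers with $C^2>0$, the only remaining content is to eliminate the cases $C^2\le 0$, and this is exactly where the hypothesis on $L$ is used. I would argue by contradiction, assuming such an $E$ destabilizes $L$.

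First I would record the relevant quantities. Writing $ch(L)=(1,l,e)$ with $l=l_\omega\omega+l_HH$ and $e=\tfrac12 l^2$, and setting $a=\omega\cdot C>0$, one has $ch(E)=(1,\,l-C,\,\tfrac12(l-C)^2-n)$. Using $\omega^2=1$ and the formula for $\rho$ at $\sigma_{1,0,0}=(Coh^0(X),Z_{1,0,0})$,
\[
\rho_{1,0,0}(L)=\frac{e-\tfrac12}{l_\omega},\qquad \rho_{1,0,0}(E)=\frac{\tfrac12(l-C)^2-n-\tfrac12}{l_\omega-a}.
\]
Because $E,L\in Coh^0(X)$ both denominators are positive, so the destabilizing condition $\rho_{1,0,0}(E)>\rho_{1,0,0}(L)$ cross-multiplies and, after substituting $\tfrac12(l-C)^2=e-l\cdot C+\tfrac12 C^2$, simplifies to
\[
a\!\left(e-\tfrac12\right)-l_\omega(l\cdot C)+l_\omega\!\left(\tfrac12 C^2-n\right)>0.
\]

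Next I would translate the hypothesis. With $Z_C(L)=-(l\cdot C)+\sqrt{-1}\,a$ and $Z_X(L)=Z_{1,0,0}(L)$, a short calculation (exactly as in the proof of Lemma~\ref{lem: conjDim2}) shows
\[
{\rm Im}\!\left(\frac{Z_C(L)}{Z_X(L)}\right)>0 \iff l_\omega(l\cdot C)-a\!\left(e-\tfrac12\right)>0.
\]
Thus the first two terms of the destabilizing inequality are \emph{negative}, which forces $l_\omega(\tfrac12 C^2-n)>0$; since $l_\omega>0$ this gives $C^2>2n\ge 0$, and in particular $C^2>0$, contradicting Proposition~\ref{rk1}. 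Hence no destabilizing object exists and $L$ is $\sigma_{1,0,0}$ stable. Although Lemma~\ref{lem: conjDim2} is stated for irreducible curves, both $Z_C(L)$ and ${\rm Im}(\,\cdot\,/Z_X(L))$ are $\mathbb{R}$-linear in the class of $C$, so the obstruction extends from irreducible curves to the arbitrary effective $C$ arising here.

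The genuine geometric work—pinning destabilizers down to rank one via the nesting of walls—is already done in Propositions~\ref{type123} and~\ref{type4}, so the remaining step is essentially bookkeeping: writing both inequalities in the same variables so that they combine to determine the sign of $\tfrac12 C^2-n$. The one conceptual point worth emphasizing is that the dHYM obstruction is precisely strong enough to close the $C^2\le 0$ gap left open by Proposition~\ref{rk1}; there is no analogous input coming from Bridgeland stability alone, which is what makes the converse fail and underlies the ``but not conversely'' of the paper's main claim.
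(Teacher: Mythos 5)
Your proposal is correct and takes essentially the same route as the paper: after the reduction to rank-one destabilizers $L(-C)\otimes I_n$ with $C^2\leq 0$ via Propositions~\ref{type123}, \ref{type4} and \ref{rk1}, your cross-multiplied inequality is exactly the paper's comparison of the destabilizing condition with the obstruction~\eqref{ObsC}, both yielding $\tfrac12 C^2 > n \geq 0$ and hence a contradiction. Your closing remark that the obstruction extends by $\mathbb{R}$-linearity of ${\rm Im}(Z_C(L)/Z_X(L))$ in $C$ from irreducible curves to the arbitrary effective curves arising here is a small point the paper leaves implicit.
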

\begin{proof}
	By Proposition~\ref{type4}, we only need to show the obstruction implies that $\rho_{1, 0, 0}(L(-C))<\rho_{1, 0, 0}(L)$ for any curve $C$ of negative self-intersection.
	We write the condition in Lemma \ref{lem: conjDim2} in a more explicit form. To better compare with Bridgeland stability condition, we require that $Z_{1, 0, 0}(L)$ lies on the upper half plane removing the positive real axis.  
	Then the algebraic obstruction in Lemma \ref{lem: conjDim2} is equivalent to 
	\begin{equation}
	\label{ObsC}
	C\cdot ch_1(L)>\frac{ch_2(L)-\frac{1}{2}}{ch_1(L)\cdot\omega}(C\cdot \omega)
	\end{equation}
	for all $C\subset X$. 	
	On the other hand, $\rho_{1, 0, 0}(L(-C))<\rho_{1, 0, 0}(L)$ if and only if 
	\begin{equation*}
	\frac{ch_2(L)-C\cdot ch_1(L)+\frac{1}{2}C^2-\frac{1}{2}}{ch_1(L)\cdot\omega-C\cdot \omega}<\frac{ch_2(L)-\frac{1}{2}}{ch_1(L)\cdot\omega}.
	\end{equation*}
	This is equivalent to 
	\begin{equation*}
	C\cdot ch_1(L)-\frac{1}{2}C^2>\frac{ch_2(L)-\frac{1}{2}}{ch_1(L)\cdot\omega}(C\cdot \omega)
	\end{equation*}
	Then equation~\eqref{ObsC} implies that $L$ is $\sigma_{1, 0, 0}$-stable.		
\end{proof}

Finally, we give an example of a line bundle $L$ which is $\sigma_{1, 0, 0}$-stable, but does not satisfy the obstruction in Lemma \ref{lem: conjDim2}. Denoting the hyperplane class on $\mathbb{P}^2$ by $G$, and the exceptional divisor by $E$. We pick 
\begin{equation*}
\omega=\frac{1}{\sqrt{3}}(2G-E)
\end{equation*} and 
\begin{equation*}
H=\frac{1}{\sqrt{3}}(G-2E).
\end{equation*}
Consider the line bundle
\begin{equation*}
L=\frac{4}{\sqrt{3}}\omega-\frac{2}{\sqrt{3}}H.
\end{equation*}
Then by explicit computation, we see that $L$ satisfies 
\begin{equation*}
\frac{C\cdot ch_1(L)-\frac{1}{2}C^2}{C\cdot \omega}>\frac{ch_2(L)-\frac{1}{2}}{ch_1(L)\cdot\omega},
\end{equation*}
for $C=E$, then $L$ is $\sigma_{1, 0, 0}$ stable by Proposition 5.12 in \cite{AM16}.
However it does not satisfy the obstruction in equation~\eqref{ObsC}.

 \section{Analytic aspects and heat flows}\label{sec: AnAsp}
 
 In this final section we wish to focus on some of the analytic aspects of the dHYM equation, including some approaches using geometric flows. In fact, the first approach to understanding the existence of solutions to the dHYM equation was due to Jacob-Yau \cite{JY} using a heat flow approach.  Suppose $[\alpha] \in H^{1,1}(X,\mathbb{R})$ is a class admitting an almost calibrated representative $\alpha$.  Let $\hat{\theta}$ denote the analytic lifted angle, and consider the heat flow of functions $\phi \in C^{\infty}(X,\mathbb{R})$ given by
 \begin{equation}\label{eq: lbmcf}
 \ddt \phi= \Theta(\alpha_{\phi}) - \hat{\theta}, \qquad  \phi(0)= \phi_0
 \end{equation}
This is the {\em line bundle mean curvature flow} (LBMCF) introduced by Jacob-Yau \cite{JY}, who proved
\begin{thm}[Jacob-Yau, \cite{JY}]
Suppose $[\alpha]$ has hypercritical phase in the sense of Definition~\ref{defn: liftedAngle}, and that $(X,\omega)$ has non-negative orthogonal bisectional curvature.  Then the line bundle mean curvature flow~\eqref{eq: lbmcf} starting from initial data $\phi_0$ satisfying $\Theta(\alpha_{\phi_0}) > (n-1)\frac{\pi}{2}$ exists for all time and converges to a solution of the dHYM equation.
\end{thm}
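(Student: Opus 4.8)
The plan is to treat \eqref{eq: lbmcf} as a fully nonlinear parabolic flow whose fixed points are solutions of the dHYM equation, and to run the standard scheme: short-time existence, preservation of a geometric cone condition, a priori estimates of orders zero through two, and a concavity-based bootstrap giving long-time existence and convergence. First I would observe that the linearization of the operator $\phi \mapsto \Theta_\omega(\alpha_\phi)$ is $F^{i\bar{j}} = \partial \Theta_\omega/\partial(\alpha_\phi)_{i\bar{j}}$, which in an eigenbasis has entries $1/(1+\lambda_i^2) > 0$; hence \eqref{eq: lbmcf} is strictly parabolic and admits a unique smooth solution on a maximal interval $[0,T)$ by standard parabolic theory.

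The conceptual heart of the argument is the preservation of the hypercritical cone. Differentiating \eqref{eq: lbmcf} in time and using that $\Theta_\omega(\alpha_\phi)$ depends on $\phi$ only through its complex Hessian gives $\partial_t \dot{\phi} = F^{i\bar{j}}\partial_i\partial_{\bar{j}}\dot{\phi}$, so $\dot{\phi} = \Theta_\omega(\alpha_\phi) - \hat{\theta}$ solves a linear parabolic equation. The maximum principle then forces $\Theta_\omega(\alpha_{\phi(t)})$ to remain in $[\inf_X \Theta_\omega(\alpha_{\phi_0}), \sup_X \Theta_\omega(\alpha_{\phi_0})]$, a compact subinterval of $((n-1)\tfrac{\pi}{2}, n\tfrac{\pi}{2})$ under the hypercritical hypothesis on the initial data. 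A short algebraic check shows that $\Theta_\omega(\alpha_\phi) > (n-1)\tfrac{\pi}{2}$ forces every eigenvalue $\lambda_i$ to be positive, so $\alpha_{\phi(t)}$ remains a K\"ahler metric in $[\alpha]$ along the flow. This confines the problem to the space of K\"ahler potentials and supplies the lower bound $\lambda_i > 0$ for free.

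With positivity in hand, I would next establish the $C^0$ estimate, a uniform bound on $\mathrm{osc}_X \phi(t)$, exploiting that $\alpha_\phi$ is positive in a fixed cohomology class. The main obstacle is the second-order estimate: one applies the maximum principle to a test quantity such as $\log \mathrm{tr}_\omega \alpha_\phi - A\phi$ for large $A$, and on commuting covariant derivatives of $\omega$ one encounters terms in the bisectional curvature of $(X,\omega)$. It is precisely here that the hypothesis of non-negative orthogonal bisectional curvature enters, to give these curvature contributions a favorable sign, while the concavity of $\Theta_\omega$ in the hypercritical range (where $\Theta_\omega > (n-2)\tfrac{\pi}{2}$) controls the third-order terms. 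This produces the upper bound $\lambda_i \leq C$, and combined with the $C^0$ bound and Sobolev embedding, a $C^1$ bound as well.

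Finally, uniform ellipticity (the $\lambda_i$ now lie in a fixed compact subset of $(0,\infty)$) together with the concavity of $\Theta_\omega$ lets me invoke Evans--Krylov for a uniform $C^{2,\alpha}$ estimate, and Schauder bootstrapping gives uniform $C^\infty$ bounds, whence $T = \infty$. For convergence I would return to the linear equation $\partial_t \dot{\phi} = F^{i\bar{j}}\partial_i\partial_{\bar{j}}\dot{\phi}$, whose coefficients are now uniformly elliptic and uniformly smooth: the parabolic Harnack inequality forces $\mathrm{osc}_X \dot{\phi}(t) \to 0$, so $\Theta_\omega(\alpha_{\phi(t)})$ converges to a spatial constant $c$. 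Passing to a $C^\infty$ subsequential limit $\alpha_{\phi_\infty}$, the identity $(\omega + \sqrt{-1}\alpha_{\phi_\infty})^n = r_\infty e^{\sqrt{-1} c}\,\omega^n$ integrated against the fixed cohomological constant $\hat{z}([\omega],[\alpha]) = |\hat{z}|e^{\sqrt{-1}\hat{\theta}}$ forces $c = \hat{\theta}$, so the limit solves the dHYM equation; standard decay estimates then upgrade this to full $C^\infty$ convergence of $\phi(t)$ itself.
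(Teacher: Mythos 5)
This survey states the Jacob--Yau theorem without reproducing its proof, so I am comparing your outline against the argument of \cite{JY}. Your skeleton is largely the right one --- short-time existence from parabolicity of $F^{i\bar j}$, the heat equation $\partial_t\dot\phi = F^{i\bar j}\nabla_i\nabla_{\bar j}\dot\phi$ preserving the phase interval, the implication $\Theta_\omega(\alpha_\phi)>(n-1)\frac{\pi}{2}\Rightarrow \lambda_i>0$, Evans--Krylov via concavity, and Harnack-based convergence all appear in \cite{JY}. But there is a genuine gap at your third step, and it propagates. The claimed $C^0$ oscillation bound does \emph{not} follow from positivity of $\alpha_\phi$ in a fixed class: positivity gives $\Delta_\omega\phi \ge -\mathrm{tr}_\omega\alpha$, hence via the Green's function only the one-sided bound $\sup_X\phi \le \frac{1}{V}\int_X\phi\,\omega^n + C$; the infimum of a normalized quasi-plurisubharmonic function in a fixed class is \emph{not} uniformly bounded (think of potentials with deep logarithmic wells), and unlike the Monge--Amp\`ere setting you have no control of the volume form $(\alpha_\phi)^n$ at this stage with which to run a Ko{\l}odziej/Yau-type argument. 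Since your second-order estimate uses the test quantity $\log\mathrm{tr}_\omega\alpha_\phi - A\phi$, which needs the $C^0$ bound to absorb the $A$-terms, your steps three and four are circular as written.

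The actual mechanism in \cite{JY} resolves this in the opposite order, and it is precisely what the curvature hypothesis is for. First, preservation of the phase gives the quantitative lower bound $\lambda_j \ge \tan\bigl(\inf_X\Theta_\omega(\alpha_{\phi_0}) - (n-1)\frac{\pi}{2}\bigr) > 0$ for every eigenvalue. Then the maximum principle is applied to the second-order quantity \emph{alone}, with no $-A\phi$ term: commuting derivatives produces curvature contributions of the schematic form $\sum_{i,k} F^{i\bar i} R_{i\bar i k\bar k}(\cdots)$, and non-negative orthogonal bisectional curvature $R_{i\bar i k \bar k}\ge 0$ ($i \ne k$) together with $\lambda_i > 0$ gives these the favorable sign, yielding $\epsilon\omega \le \alpha_\phi \le C\omega$ directly. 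Only \emph{then} does the oscillation bound follow, from $|\Delta_\omega\phi|\le C$ and the Green's function. Two smaller corrections: your parenthetical misstates the concavity range --- by Yuan's result (recalled in Section~\ref{sec: AnAsp}), $\Theta$ is concave on $\{\Theta \ge (n-1)\frac{\pi}{2}\}$, while on $((n-2)\frac{\pi}{2}, n\frac{\pi}{2})$ one only has convex level sets (or concavity of $e^{-C\Theta}$ as in \cite{CPW}); since the flow preserves $\Theta > (n-1)\frac{\pi}{2}$, genuine concavity is available and Evans--Krylov applies. And for convergence of $\phi$ itself (not merely subsequential convergence of $\alpha_\phi$ modulo drift), the clean device is the cohomological identity ${\rm Im}\bigl(e^{-\sqrt{-1}\hat\theta}\int_X(\omega+\sqrt{-1}\alpha_{\phi})^n\bigr) = 0$, which forces $\dot\phi = \Theta_\omega(\alpha_\phi)-\hat\theta$ to vanish at some point at every time; combined with exponential decay of $\mathrm{osc}_X\,\dot\phi$ from the Harnack inequality, this makes $\|\dot\phi\|_{C^0}$ integrable in time, so $\phi(t)$ is Cauchy and the limit angle is pinned at $\hat\theta$ --- this is the precise content behind your closing appeal to ``standard decay estimates.''
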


The assumption on the orthogonal bisectional curvature, and the initial data is rather restrictive. On the other hand, as pointed out in \cite{JY}, if $\alpha$ is K\"ahler then $k\alpha$ will satisfy the condition $\Theta(\alpha_{\phi_0}) > (n-1)\frac{\pi}{2}$ for some $k\gg 1$.  In order to remove, or at least weaken, these assumptions the first author, with Jacob and Yau considered the problem from the elliptic point of view \cite{CJY15}.  Building on ideas of Sz\'ekelyhidi \cite{Sz} and Wang-Yuan \cite{WY} the authors proved
\begin{thm}[C.-Jacob-Yau, \cite{CJY15}]\label{thm: existence thm}
Let $(X,\omega)$ be a compact K\"ahler manifold with complex dimension $n$, and fix a cohomology class $[\alpha]\in H^{1,1}(X,\mathbb{R})$.  Suppose that 
\[
\int_{X}(\omega+\sqrt{-1}\alpha)^n \in \mathbb{R}_{>0}e^{i\hat{\theta}}
\]
for $\hat{\theta} \in ((n-2)\frac{\pi}{2}, n\frac{\pi}{2})$.  Suppose that there exists a form $\chi:=\alpha+\ddb\underline{\phi} \in [\alpha]$ with the following property: at each point $x \in X$, if $\mu_1, \dots, \mu_n$ denote the eigenvalues of $\alpha+\ddb \underline{\phi}$ with respect to $\omega$, then, for all $j=1, \dots,n$ we have
\begin{equation}
\label{eq: Csub}
\sum_{\ell \ne j}  \arctan(\mu_\ell) > \hat{\theta} -\frac{\pi}{2}.
\end{equation}
Furthermore, assume
\begin{equation}\label{eq: extraAss}\Theta(\chi) >(n-2)\frac{\pi}{2}.
%\item $\hat{\Theta} \geq ((n-2)+\frac{2}{n}) \frac{\pi}{2}$.
\end{equation}
Then there exists a smooth function $\phi: X\rightarrow \mathbb{R}$, unique up to addition of a constant, such that $\alpha_{\phi}= \alpha+\ddb \phi $ solves the deformed Hermitian-Yang-Mills equation.
\end{thm}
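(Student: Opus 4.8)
The plan is to solve the deformed Hermitian--Yang--Mills equation in the form $\Theta_\omega(\alpha_\phi) = \hat\theta$ by the continuity method, the heart of the matter being a priori estimates up to second order. The guiding observation is that on the region where the Lagrangian phase $\Theta_\omega(\lambda) = \sum_i \arctan(\lambda_i)$ exceeds $(n-2)\tfrac{\pi}{2}$, the operator $\Theta_\omega$ is concave and its level sets $\{\Theta_\omega = c\}$ are convex hypersurfaces; this is exactly the range picked out by the hypothesis $\hat\theta \in ((n-2)\tfrac{\pi}{2}, n\tfrac{\pi}{2})$ together with \eqref{eq: extraAss}. This places the problem within the framework of fully nonlinear elliptic equations on Hermitian manifolds developed by Sz\'ekelyhidi \cite{Sz}, and the first thing I would do is check that the hypotheses on $\chi = \alpha + \ddb\underline\phi$ are precisely what that framework requires. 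Indeed, \eqref{eq: extraAss} places $\chi$ in the concavity region, while condition \eqref{eq: Csub}, namely $\sum_{\ell \ne j}\arctan(\mu_\ell) > \hat\theta - \tfrac{\pi}{2}$ for all $j$, is equivalent to $\lim_{\lambda_j \to +\infty}\Theta_\omega(\mu_1,\dots,\lambda_j,\dots,\mu_n) > \hat\theta$; this is exactly Sz\'ekelyhidi's $\mathcal{C}$-subsolution condition, i.e. that $\{\lambda : \Theta_\omega(\lambda) = \hat\theta,\ \lambda - \mu \in \overline{\Gamma_n}\}$ is bounded, where $\mu = (\mu_1,\dots,\mu_n)$.

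With the structural hypotheses in hand, I would set up the continuity path $\Theta_\omega(\alpha_{\phi_t}) = (1-t)\Theta_\omega(\chi) + t\hat\theta$ for $t \in [0,1]$, which is solved at $t = 0$ by $\underline\phi$ and becomes the dHYM equation at $t = 1$. A short pointwise computation shows that $\underline\phi$ remains a $\mathcal{C}$-subsolution for every right-hand side along this path, since the interpolated phase is pointwise bounded above by $\max(\Theta_\omega(\chi), \hat\theta)$. Openness of the set of solvable $t$ follows from the implicit function theorem: the linearization is a second-order linear elliptic operator with no zeroth-order term, hence an isomorphism modulo constants on the appropriate H\"older spaces. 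Uniqueness up to a constant is immediate from the maximum principle applied to the difference of two solutions. Closedness is the analytic core and reduces to uniform $C^{2,\alpha}$ estimates, independent of $t$, which I would obtain in the usual hierarchy.

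The estimates proceed in order. The $C^0$ bound follows from the existence of the $\mathcal{C}$-subsolution by the Alexandrov--Bakelman--Pucci / maximum principle argument of \cite{Sz} (after normalizing $\sup_X \phi_t = 0$). The gradient estimate is obtained by a blow-up argument, reducing a hypothetical failure to a Liouville-type statement for an entire solution of the model equation. The key step is the second-order estimate: one bounds $\sup_X |\ddb \phi_t|_\omega$ by applying the maximum principle to a test quantity built from the largest eigenvalue of $\ddb\phi_t$ plus a correction coming from the subsolution, where the concavity of $\Theta_\omega$ in the supercritical range and the coercivity supplied by the $\mathcal{C}$-subsolution are used to absorb the third-order terms produced by differentiating the equation twice; here the precise structure of the Lagrangian phase operator enters through the estimates of Wang--Yuan \cite{WY}. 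Once $\|\ddb\phi_t\|_{L^\infty}$ is controlled, the concavity of $\Theta_\omega$ on the supercritical region makes the Evans--Krylov theorem applicable, yielding a uniform $C^{2,\alpha}$ bound, and Schauder estimates then bootstrap to uniform $C^\infty$ bounds, closing the continuity method and producing the desired smooth solution.

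I expect the second-order estimate to be the main obstacle. The difficulty is that $\Theta_\omega$ is concave only on the supercritical region $\{\Theta_\omega > (n-2)\tfrac{\pi}{2}\}$, so one must simultaneously prove a Hessian bound and confirm that the solution does not leave this region, a feature guaranteed along the continuity path precisely by \eqref{eq: extraAss} and the preserved subsolution property. Controlling the bad third-order terms in the differentiated equation without a positive lower bound coming from uniform convexity is delicate, and it is exactly here that the combination of Sz\'ekelyhidi's $\mathcal{C}$-subsolution machinery \cite{Sz} with the special-Lagrangian Hessian estimates of Wang--Yuan \cite{WY} is indispensable; the remaining steps are, by comparison, routine adaptations of standard elliptic theory.
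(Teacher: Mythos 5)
Your proposal is correct and follows essentially the same route as the original proof: the paper attributes the theorem to \cite{CJY15}, which, exactly as you describe, builds on Sz\'ekelyhidi's $\mathcal{C}$-subsolution framework \cite{Sz} and the Wang--Yuan concavity and Hessian estimates \cite{WY}, runs a continuity method with a $C^0$ bound from the subsolution, a second-order estimate exploiting concavity in the supercritical range, a blow-up/Liouville argument for the gradient, and Evans--Krylov plus Schauder to close. Your identification of~\eqref{eq: Csub} with the $\mathcal{C}$-subsolution condition and of~\eqref{eq: extraAss} as keeping the solution in the concavity region matches the role these hypotheses play in \cite{CJY15}.
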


In fact, it is not hard to see that the existence of $\underline{\phi}$ in Theorem~\ref{thm: existence thm} is both necessary and sufficient. The authors also noted in \cite{CJY15} that the inequalities in Corollary~\ref{cor: obstructions} (ii) were necessary for the existence of such a function $\underline{\phi}$.  Note that the condition~\eqref{eq: extraAss} is automatically satisfied if $\hat{\theta} > (n-2 + \frac{2}{n}) \frac{\pi}{2}$.   However, in the range $\hat{\theta} \in ((n-2)\frac{\pi}{2}, (n-2 + \frac{2}{n}) \frac{\pi}{2})$ it is expected that assumption~\eqref{eq: extraAss} can be dropped.  This has been confirmed in dimension $2$ \cite{JY}, and dimension $3$ \cite{Ping}.

Several authors have recently studied the LBMCF.  In \cite{HJ1} Han-Jin prove a local stability theorem for the flow which does not assume anything about $\hat{\theta}$.  Namely, they prove the convergence of the LBMCF for any $\hat{\theta}$, provided the flow starts from initial data which is $C^2$-close to a solution of the dHYM equation.  Han-Yamamoto \cite{HY} proved an $\epsilon$-regularity theorem for the flow, using a Huisken-type monotonicity formula.  In the case of K\"ahler surfaces, Takahashi \cite{Tak1} studied the LBMCF for unstable classes.  The main result of \cite{Tak} is that the LBMCF starting from sufficiently positive initial data converges to a solution of the dHYM equation away from a finite union of curves with self intersection $-1$; see \cite{SW, FLSW}.  In general the LBMCF seems more difficult to handle than the elliptic approach of \cite{CJY15}.  For example, to the authors' knowledge, there is no parabolic proof of Theorem~\ref{thm: existence thm} when $\hat{\theta}\in ((n-2)\frac{\pi}{2}, (n-1)\frac{\pi}{2})$; two useful references in this direction are \cite{PT, CPW}.

Very recently, motivated by the GIT approach of the first author and Yau to the dHYM equation, Takahashi \cite{Tak1} introduced the flow
\begin{equation}\label{eq: tanFlow}
\ddt \phi= \tan(\Theta(\alpha_{\phi}) - \hat{\theta}), \qquad  \phi(0)= \phi_0.
\end{equation}
This flow is precisely the gradient flow of the Kempf-Ness functional $\mathcal{J}$ introduced in Section~\ref{sec: GIT}.  According to finite dimensional GIT \cite{GIT, ThNo}, this flow (which is conjugate to the gradient flow of the norm squared of the moment map) should converge to a zero of the moment map, when it exists, or to the `optimal destabilizer' when no zero of the moment map exists.  For this reason it would be very interesting to understand the behavior of ~\eqref{eq: tanFlow} in examples where no solution of the dHYM equation exists.  

These ideas have been put to good use in other problems in complex geometry relating algebraic geometry and nonlinear PDE.  For example, in the study of the Hermitian-Yang-Mills equation on  holomorphic vector bundles, the gradient flow of the Kempf-Ness functional is the Donaldson heat flow, which converges (modulo gauge) to the double dual of the associated graded of the Harder-Narasimhan-Seshadri filtration \cite{Do3, UY, BS, DW}. Recently, Haiden-Katzarkov-Kontsevich-Pandit \cite{HKKP, HKKP1} have studied the Donaldson heat flow on Riemann surfaces \cite{HKKP1}, and a discrete version of the Donaldson heat flow on quivers, relating the limiting behavior to refinements of Harder-Narasimhan filtrations in Bridgeland stability conditions. In the study of K\"ahler-Einstein metrics on Fano varieties, the first author with Hisamoto and Takahashi \cite{CHT} introduced the inverse Monge-Amp\`ere flow, which is the gradient flow of the Kempf-Ness functional in Donaldson's GIT approach to K\"ahler-Einstein metrics \cite{Do4}. In \cite{CHT} the asymptotics of the inverse Monge-Amp\`ere flow were related to optimal degenerations for toric Fano varieties.  These ideas have been put to use to study the existence of Harder-Narasimhan type filtrations for general Fano manifolds \cite{His, Xia}.  

In regards to the flow~\eqref{eq: tanFlow}, the main result of \cite{Tak1} is the following

\begin{thm}[Takahashi, \cite{Tak1}]\label{thm: convTanFlow}
Let $(X,\omega)$ be compact K\"ahler, and suppose $[\alpha] \in H^{1,1}(X,\mathbb{R})$ has hypercritical phase $\hat{\theta} \in ((n-1)\frac{\pi}{2}, n\frac{\pi}{2})$ in the sense of Definition~\ref{defn: liftedAngle}.  If $\phi_0 \in \mathcal{H}$ is any almost calibrated initial data, then the flow~\eqref{eq: tanFlow} starting from $\phi_0$ exists for all time.  Furthermore, if there is a $(1,1)$ form $\chi \in [\alpha]$ satisfying~\eqref{eq: Csub}, then the flow converges smoothly to a solution of the dHYM equation.
\end{thm}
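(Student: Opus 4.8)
The plan is to treat \eqref{eq: tanFlow} as a fully nonlinear parabolic equation and to separate the two assertions: long-time existence should follow from the hypercritical phase hypothesis alone via the maximum principle, while convergence requires the subsolution $\chi$ to produce \emph{time-independent} a priori estimates. First I would note that, in the hypercritical range, the Lagrangian phase operator $\Theta_\omega$ is a concave elliptic operator and $\tan$ is smooth and strictly increasing on $(-\tfrac{\pi}{2},\tfrac{\pi}{2})$; hence short-time existence and uniqueness of a smooth solution on a maximal interval $[0,T_{\max})$ is standard. The essential preliminary step is to show that the almost calibrated condition is preserved, i.e. $\phi(t)\in\mathcal{H}$ for all $t<T_{\max}$. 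Writing $F:=\Theta_\omega(\alpha_\phi)-\hat{\theta}$ and differentiating \eqref{eq: tanFlow} in time gives, in a frame diagonalizing $\alpha_\phi$ with respect to $\omega$, an evolution of the form
\[
\ddt F = \sec^2(F)\,L_\phi F + 2\sec^2(F)\tan(F)\sum_i \tfrac{1}{1+\lambda_i^2}|\nabla_i F|^2,\qquad L_\phi:=\sum_i \tfrac{1}{1+\lambda_i^2}\nabla_i\nabla_{\bar{i}},
\]
where $L_\phi$ is the (positive-definite) linearization of $\Theta_\omega$. At a spatial maximum of $F$ the gradient term vanishes and $L_\phi F\le 0$, so $\max_X F(t)$ is non-increasing; dually $\min_X F(t)$ is non-decreasing. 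Therefore $F(t)$ remains in $[\min_X F(0),\max_X F(0)]\subset(-\tfrac{\pi}{2},\tfrac{\pi}{2})$, which simultaneously keeps $\phi(t)\in\mathcal{H}$ and bounds the speed by $|\ddt\phi|=|\tan F|\le\tan(\max_X|F(0)|)$, so the flow stays uniformly parabolic.

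To obtain $T_{\max}=\infty$ I would upgrade this to higher estimates on each finite interval $[0,T]$. The speed bound gives linear-in-$t$ control of $\phi$ in $C^0$; the gradient and complex Hessian estimates then follow from the concavity of $\Theta_\omega$ and the convexity of the level sets $\{\Theta_\omega=c\}$ valid in the hypercritical range, adapting to the parabolic setting the elliptic arguments of Sz\'ekelyhidi and Wang--Yuan underlying Theorem~\ref{thm: existence thm}. Evans--Krylov together with Schauder bootstrapping then produce uniform $C^\infty$ bounds on $[0,T]$. Since none of these quantities blow up in finite time, the continuation criterion yields long-time existence, proving the first assertion without reference to $\chi$.

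For convergence I would use that \eqref{eq: tanFlow} is the gradient flow of the Kempf-Ness functional $\mathcal{J}$ of Lemma~\ref{lem: func}(ii). Since the argument of $e^{-\sqrt{-1}\hat{\theta}}(\omega+\sqrt{-1}\alpha_\phi)^n$ is exactly $F$, one has ${\rm Im}=\tan(F)\,{\rm Re}$ of this quantity, and combining this with $\ddt\phi=\tan F$ gives the Lyapunov identity
\[
\ddt\mathcal{J}(\phi(t)) = -\int_X \tan(F)\,{\rm Im}\!\left(e^{-\sqrt{-1}\hat{\theta}}(\omega+\sqrt{-1}\alpha_\phi)^n\right) = -\int_X \tan^2(F)\,{\rm Re}\!\left(e^{-\sqrt{-1}\hat{\theta}}(\omega+\sqrt{-1}\alpha_\phi)^n\right)\le 0,
\]
with equality precisely when $F\equiv 0$, i.e. at a solution of the dHYM equation. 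The role of the subsolution $\chi$ satisfying \eqref{eq: Csub} is to make the estimates of the previous paragraph \emph{uniform in $t$}, yielding time-independent $C^\infty$ bounds and, via the convexity and coercivity of $\mathcal{J}$, a lower bound for $\mathcal{J}$. Integrating the Lyapunov identity then forces the speed to decay, so along a subsequence $t_k\to\infty$ the flow converges in $C^\infty$ to some $\phi_\infty$ with $F\equiv 0$; the convexity of $\mathcal{J}$ along geodesics in $\mathcal{H}$ (Lemma~\ref{lem: func}) promotes subsequential convergence to full smooth convergence.

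The main obstacle I expect is the \emph{uniform-in-time} second-order estimate: one must control the complex Hessian $\DDb\Phi$ using only the subsolution inequality \eqref{eq: Csub}, and decouple this from the gradient estimate. The standard route is to apply the maximum principle to a test quantity of the form $\log\lambda_{\max}(\alpha_\phi)$ plus auxiliary first-order terms, exploiting the concavity of $\Theta_\omega$ and the subsolution to absorb the bad terms, as in Sz\'ekelyhidi's framework; making this robust along the flow, and independent of $t$, is the technical heart of the argument.
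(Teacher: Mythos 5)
Your preservation computation for $F=\Theta_{\omega}(\alpha_{\phi})-\hat{\theta}$ (the evolution equation, the resulting bound $|F|<\frac{\pi}{2}$, and the speed bound) and your Lyapunov identity $\ddt\mathcal{J}(\phi(t))=-\int_X \tan^2(F)\,{\rm Re}(e^{-\sqrt{-1}\hat{\theta}}(\omega+\sqrt{-1}\alpha_{\phi})^n)\leq 0$ are both correct and consistent with the gradient-flow structure of~\eqref{eq: tanFlow} described in Section~\ref{sec: AnAsp}. The genuine gap is in the higher-order estimates, at precisely the point you flag as "the technical heart." The concavity of $\Theta_{\omega}$ that you invoke holds, by the result of Yuan and Wang--Yuan \cite{WY} quoted in the paper, only on the set $\{\Theta \geq (n-1)\frac{\pi}{2}\}$. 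But the region your maximum principle preserves is $\{|\Theta-\hat{\theta}|<\frac{\pi}{2}\}$, which for $\hat{\theta}\in((n-1)\frac{\pi}{2},n\frac{\pi}{2})$ only guarantees $\Theta>(n-2)\frac{\pi}{2}$; since $\phi_0$ is an \emph{arbitrary} element of $\mathcal{H}$, the flow may remain forever in the supercritical-but-not-hypercritical range where $\Theta$ is not concave. Level-set convexity of $\Theta$, which underlies the elliptic arguments of Sz\'ekelyhidi and \cite{CJY15} that you propose to adapt, does not transfer to the parabolic problem: since $\ddt\phi\not\equiv 0$, one is not working on a level set of $\Theta$, and the Evans--Krylov step for the flow requires concavity of the full parabolic operator $A\mapsto\tan(\Theta(A)-\hat{\theta})$ in the second-order variable. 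This is exactly Takahashi's main new technical ingredient, emphasized in the text immediately following the theorem: for hypercritical $\hat{\theta}$, the map $A\mapsto\tan(\Theta(A)-\hat{\theta})$ is concave on $\{A : |\Theta(A)-\hat{\theta}|<\frac{\pi}{2}\}$, i.e.\ the flow operator itself is concave precisely on the region your $C^0$ argument preserves. Your proposal uses $\tan$ only as a smooth monotone reparametrization and never exploits this; as written, your scheme would at best handle initial data with $\Theta(\alpha_{\phi_0})>(n-1)\frac{\pi}{2}$, recovering a statement of Jacob--Yau type \cite{JY} rather than Theorem~\ref{thm: convTanFlow}, whose whole point is that \emph{any} $\phi_0\in\mathcal{H}$ is admissible.

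A secondary soft spot: in the convergence step you appeal to "convexity and coercivity of $\mathcal{J}$" to bound $\mathcal{J}$ below and to upgrade subsequential to full convergence, but convexity of $\mathcal{J}$ is only known along the $C^{1,1}$ geodesics of Theorem~\ref{thm: geoThm} (and under the hypercritical hypothesis), and no coercivity statement for $\mathcal{J}$ is established in this setting. Once the tan-concavity is in hand, the cleaner route is the standard one: the subsolution $\chi$ satisfying~\eqref{eq: Csub} yields time-independent $C^2$ estimates \'a la Sz\'ekelyhidi, tan-concavity gives uniform parabolic Evans--Krylov and hence uniform $C^\infty$ bounds, the integrated Lyapunov identity forces $\tan F\to 0$ in $L^2$ and then, by the uniform bounds and the maximum principle applied to $F$, smooth convergence to a solution of the dHYM equation, without any appeal to coercivity of $\mathcal{J}$.
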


A similar result for the LBMCF can be established following the proof of Theorem~\ref{thm: existence thm}, as noted in \cite[Remark 7.4]{CJY15}.  The main new technical ingredient in the proof of Theorem~\ref{thm: convTanFlow} is a concavity result for the operator $\phi \mapsto \tan(\Theta(\alpha_{\phi}) -\hat{\theta})$.  To describe this result, and put it in context, we make a brief digression to describe the Dirichlet problem for the Lagrangian phase operator.

Let $\mathcal{S}$ denote the set of real or hermitian $n\times n$ matrices.  For $A\in \mathcal{S}$ define
\begin{equation}\label{eq: phaseOP}
\Theta(A) = \sum_{i=1}^{n}\arctan(\lambda_i)
\end{equation}
where $\lambda_i, 1 \leq i \leq n$ are the eigenvalues of $A$.  It is easy to see that the operator $\Theta(A)$ is elliptic, in the sense that if $A, B\in \mathcal{S}$ and $B-A$ is non-negative definite, then $\Theta(B) \geq \Theta(A)$.  Then then makes sense to study the following local PDE question, which was first posed by Harvey-Lawson \cite{HL1}

\begin{que}\label{que: DP}
Suppose $\Omega \subset \mathbb{R}^n$ (resp. $\mathbb{C}^n$). Fix continuous functions $\phi : \del \Omega \rightarrow \mathbb{R}$ and $f: \overline{\Omega} \rightarrow (-n\frac{\pi}{2}, n\frac{\pi}{2})$.  Then does there exist a (viscosity) solution $u:\overline{\Omega}\rightarrow \mathbb{R}$ of the Dirichlet problem
\begin{equation}\label{eq: DP}
\Theta( D^{2}u) =f \quad (\,\text{ {\rm resp.}}\,\, \Theta(\del\dbar u) =f\, )\qquad u|_{\del \Omega} = \phi.
\end{equation}
What is the regularity of $u$?
\end{que}

We have been somewhat vague in formulating Question~\ref{que: DP} in order to allow maximum flexibility.  Question~\ref{que: DP} has inspired a tremendous amount of interest and work over the past $40$ years; a thorough treatment of the state of knowledge concerning Question~\ref{que: DP} would require an entire article on its own. The first significant result related to Question~\ref{que: DP} was obtained by Caffarelli-Nirenberg-Spruck \cite{CNS3} who proved the existence of solutions for $f(x)=\pm (n-1)\frac{\pi}{2}$ when $n$ is odd and $h(x)= \pm (n-2)\frac{\pi}{2}$ when $n$ is even, under an assumption on the geometry of $\del \Omega$.  Viscosity solutions to \eqref{eq: DP} for $f =\hat{\theta} \in (-\frac{n\pi}{2}, n\frac{\pi}{2})$ constant were obtained by Harvey-Lawson in \cite{HL} for domains $\Omega$ satisfying certain geometric conditions on their boundary. 

It turns out the Harvey-Lawson viscosity solutions to~\eqref{eq: DP} with constant right-hand side are not always classical solutions (that is, they may fail to be $C^2$).  Examples of this phenomenon were first constructed by Nadirashvili-Vl\u{a}du\c{t} \cite{NV}; see also \cite{WY1}.  In order to obtain regularity, one needs to exploit some additional structure of the operator $\Theta$, such as convexity or concavity.  In this direction Yuan \cite{Y, WY} showed that the map $A \mapsto \Theta(A)$ is concave on the set $\{A \in \mathcal{S}: \Theta(A) \geq (n-1)\frac{\pi}{2}\}$, and that the level sets $\{A \in \mathcal{S} : \Theta(A) =c\}$ are convex for $c\in ((n-2)\frac{\pi}{2}, n\frac{\pi}{2})$. Furthermore, this is sharp in the sense that for $|c| < (n-2)\frac{\pi}{2}$ the level sets of $\Theta$ are not convex.  The first author, Picard and Wu \cite{CPW} strengthened Yuan's result by showing that for every $\delta >0$ there is a constant $C = C(\delta)$ such that $A\mapsto e^{-C \Theta(A)}$ is concave on the set $\{A\in \mathcal{S}: \Theta(A) > (n-2)\frac{\pi}{2} +\delta\}$.  Using this result, \cite{CPW} obtained the existence of classical solutions to~\eqref{eq: DP} for $f, \phi$ sufficiently regular, provided $f(\overline{\Omega})\subset ((n-2)\frac{\pi}{2}, n\frac{\pi}{2})$ and $\Omega$ admits a subsolution; for example, strictly convex domains, or domains satisfying the geometric assumptions of \cite{CNS3} are covered by this result.  In the same setting, Dinew-Do-T\^{o} \cite{DDT} proved the existence of viscosity solutions assuming only $f, \phi$ continuous.

In this vein, Takahashi's result \cite{Tak1} obtains a new concavity result for~\eqref{eq: phaseOP}; he shows that for $\hat{\theta} \in ((n-1)\frac{\pi}{2}, n\frac{\pi}{2})$, the map $A\mapsto \tan(\Theta(A)- \hat{\theta})$ is concave on the set $\{A\in \mathcal{S} : |\Theta(A)-\hat{\theta}|<\frac{\pi}{2}\}$.  Harvey-Lawson  \cite{HL3} proved a related ``tameness" result for the operator $A\mapsto \tan(\frac{\Theta(A)}{n})$ on the set $\{A \in \mathcal{S} : \Theta(A) > (n-2)\frac{\pi}{2} +\delta\}$ for $\delta >0$.  Based on their general theory for tamable elliptic equations \cite{HL2}, Harvey-Lawson provided another proof of the result of Dinew-Do-T\^{o}.  It would be very interesting to understand if Takahashi's flow~\eqref{eq: tanFlow} can shed any light on the issue of Harder-Narasimhan filtrations for unstable line bundles.

\end{document}